\definecolor{darkred}{rgb}{0.5,0,0}
\definecolor{darkgreen}{rgb}{0,0.5,0}
\definecolor{darkblue}{rgb}{0,0,0.5}
\theoremstyle{plain}
\newtheorem{theorem}{Theorem}[section]
\newtheorem*{theorem*}{Theorem}
\newtheorem{proposition}[theorem]{Proposition}
\newtheorem{lemma}[theorem]{Lemma}
\newtheorem{corollary}[theorem]{Corollary}
\theoremstyle{remark}
\newtheorem{remark}[theorem]{Remark}
\newtheorem*{acknowledgements}{Acknowledgements}
\theoremstyle{definition}
\newtheorem{definition}[theorem]{Definition}
\numberwithin{equation}{section}
\renewcommand{\(}{\left(}
\renewcommand{\)}{\right)}
\newcommand{\ZZ}{\mathbb{Z}}
\newcommand{\QQ}{\mathbb{Q}}
\newcommand{\Zp}{\mathbb{Z}_p}
\newcommand{\Qp}{\mathbb{Q}_p}
\newcommand{\RR}{\mathbb{R}}
\newcommand{\Fp}{\mathbb{F}_p}
\renewcommand{\AA}{\mathbb{A}}
\newcommand{\PP}{\mathbb{P}}
\newcommand{\sA}{\mathcal{A}}
\newcommand{\sB}{\mathcal{B}}
\newcommand{\sD}{\mathcal{D}}
\newcommand{\sO}{\mathcal{O}}
\newcommand{\sU}{\mathcal{U}}
\newcommand{\sV}{\mathcal{V}}
\newcommand{\sX}{\mathcal{X}}
\newcommand{\frakp}{\mathfrak{p}}
\DeclareMathOperator{\HH}{H}
\DeclareMathOperator{\Gal}{Gal}
\DeclareMathOperator{\Pic}{Pic}
\DeclareMathOperator{\Br}{Br}
\DeclareMathOperator{\inv}{inv}
\DeclareMathOperator{\Res}{Res}
\DeclareMathOperator{\Spec}{Spec}
\DeclareMathOperator{\tr}{tr}
\DeclareMathOperator{\Cores}{Cores}
\DeclareMathOperator{\Hom}{Hom}
\DeclareMathOperator{\Norm}{Norm}
\DeclareMathOperator{\et}{\acute{e}t}
\DeclareMathOperator{\proj}{proj}
\DeclareMathOperator{\Image}{Im}
\renewcommand{\epsilon}{\varepsilon}
\newcommand{\nequiv}{\not\equiv}
\newcommand{\sumstar}{\sideset{}{^\star} \sum}
\DeclareMathOperator{\prim}{prim}
\title{Cubic surfaces failing the integral Hasse principle}
\author{Julian Lyczak}
\address{Julian Lyczak, Department of Mathematical Sciences, University of Bath, Claverton Down, Bath, BA2 7AY, UK}
\email{jl4212@bath.ac.uk}
\author{Vladimir Mitankin}
\address{Vladimir Mitankin, Institute of Mathematics and Informatics, Bulgarian Academy of Sciences, Acad. G. Bonchev St., bl.8, 1113 Sofia, Bulgaria}
\address{Institut für Algebra, Zahlentheorie und Diskrete Mathematik,
	Leibniz Universität Hannover, Welfengarten 1, 30167 Hannover, Germany}
\email{v.mitankin@math.bas.bg}
\author{H. Uppal}
\address{H. Uppal, Department of Mathematical Sciences, University of Bath, Claverton Down, Bath, BA2 7AY, UK}
\email{hsu20@bath.ac.uk}
\date{\today}
\thanks{2020 {\em Mathematics Subject Classification} 
	14G12 (primary), 11D25, 14G05, 14F22 (secondary).}
\begin{document}
	\onehalfspacing
	
	\begin{abstract}
		We study the integral Brauer--Manin obstruction for affine diagonal cubic surfaces, which we employ to construct the first counterexamples to the integral Hasse principle in this setting. We then count in three natural ways how such counterexamples are distributed across the family of affine diagonal cubic surfaces and how often such surfaces satisfy integral strong approximation off $\infty$.
	\end{abstract}  
	
	\maketitle
	\setcounter{tocdepth}{1}
	\tableofcontents
	
	\section{Introduction}
	\label{sec:intro}
	This article is devoted to the study of integers represented by diagonal ternary cubic forms. A special case of this corresponds to the famous unsolved problem of which integers are the sum of three cubes, studied by Jacobi \cite{Dic66}, Mordell \cite{Mor42}, Heath-Brown \cite{HB92}, Colliot-Thélène and Wittenberg \cite{CTW12}, Booker \cite{B19}, Booker and Sutherland \cite{BS21} and Wang \cite{W23} among many others. Our main results show for the first time that local representations do not suffice for the existence of an integral representation, i.e. the integral Hasse principle does not hold in this setting. We construct the first examples of diagonal ternary cubic forms representing an integer over $\QQ$ but for which the integral Hasse principle fails. In fact, we give two infinite families of such examples in section \ref{sec:lower} and section \ref{sec: examples of IBMO}.  
	
	To set up the framework for our investigation, let $a_{0}, a_{1}, a_{2}, a_{3}$ be non-zero integers such that $\gcd(a_1, a_2, a_3) = 1$ and denote by $U$ the smooth affine surface over $\QQ$ given by
	\begin{equation}
		\label{eqn:U-main}
		U: \quad a_1u_1^3 + a_2u_2^3 + a_3u_3^3 = a_0.
	\end{equation}
	We fix an integral model $\sU$ of $U$ over $\ZZ$, defined by the same equation. Integral representations of $a_0$ by $a_1u_1^3 + a_2u_2^3 + a_3u_3^3$ correspond to integral points on $\sU$, whose set will be denoted by $\sU(\ZZ)$. A collection of local representations of $a_0$ at all places of $\QQ$ corresponds to a point in the adelic space $\sU(\AA_\ZZ) = U(\RR) \times \prod_{p \neq \infty} \sU(\Zp)$. A necessary condition for $\mathcal{U}(\mathbb{Z})\neq \emptyset$ is $\mathcal{U}(\mathbb{A}_{\mathbb{Z}})\neq\emptyset$, if this condition is also sufficient then the \emph{integral Hasse principle} holds. If for some prime $p$, we have $p^k \mid \gcd(a_1, a_2, a_3)$ for $k\in \mathbb{Z}_{>0}$ but $p^k \nmid a_0$, then clearly $\sU(\Zp) = \emptyset$ and questions about local-global principles are trivial. Moreover, $\gcd(a_1, a_2, a_3) =1$ implies $\gcd(a_0, a_1, a_2, a_3) = 1$ and thus ensures the existence of a $\ZZ$-model defined as above. On the other hand, $\sU$ is said to be an \emph{(integral) Hasse failure} if $\sU(\AA_\ZZ) \neq \emptyset$ but $\sU(\ZZ) = \emptyset$. If the closure of $\sU(\ZZ)$ in the \emph{finite integral adelic points} $\prod_{p \neq \infty} \sU(\Zp)$ coincides with this set, then \emph{integral strong approximation (ISA) off $\infty$} holds for $\sU$.
	
	The Brauer--Manin obstruction \cite{Man71} is a powerful tool to study local-to-global principles. Its wide applications and the conjecture of Colliot-Thélène \cite[p.~174]{CT03} claiming that it is capable of explaining all failures of the Hasse principle and weak approximation for a wide class of algebraic varieties testify for that. An integral version of this tool was developed by Colliot-Thélène and Xu in \cite{CTX09}. They defined an integral Brauer--Manin set $\sU(\AA_\ZZ)^{\Br}$ by taking those integral adeles that pair to zero with the Brauer group $\Br U$ of $U$ and thus obtained a chain of inclusions $\sU(\ZZ) \subseteq \sU(\AA_\ZZ)^{\Br} \subseteq \sU(\AA_\ZZ)$. A Brauer--Manin obstruction to the integral Hasse principle is present if $\sU(\AA_\ZZ) \neq \emptyset$ but $\sU(\AA_\ZZ)^{\Br} = \emptyset$ forcing $\sU(\ZZ) = \emptyset$. It obstructs strong approximation off $\infty$ if the image of $\mathcal{U}(\mathbb{A}_{\mathbb{Z}})^{\Br}$ under the projection to $\prod_{p\neq\infty}\mathcal{U}(\mathbb{Z}_{p})$ is a strict subset.

	\subsection*{Main results}
	To study the Brauer--Manin obstruction on $U$, as given as in \eqref{eqn:U-main}, we first need to examine $\Br U$, which we calculate in full generality for the first time. We relate $\Br U$ to the Brauer group of its compactification $X$. The group $\Br X$ modulo constants was determined in {\cite[\S1, Prop.~1]{CTKS87}} over fields containing a primitive third root of unity. It follows from their computation that over $\mathbb{Q}$ the group $\Br X$ modulo constants is either trivial or $\mathbb{Z}/3\mathbb{Z}$.
	
	\begin{theorem}
		\label{thm: Brauer group thm}
		Assume that $a_0, a_1, a_2, a_3 \in \mathbb{Q}^{\ast}$. Then the algebraic part of $\Br U$ is isomorphic to $\Br X$. Moreover,
		\[
		\Br U =
		\begin{cases} 
			\Br X \oplus \mathbb{Z}/2\mathbb{Z} \ &\text{if}\ a_{1}a_{2}a_{3} \equiv 2 \bmod \QQ^{\ast 3},\\
			\Br X &\text{otherwise}.
		\end{cases}
		\]
	\end{theorem}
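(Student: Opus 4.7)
The plan is to exploit Grothendieck's residue/purity sequence for the open immersion $U \hookrightarrow X$ into the smooth projective cubic surface $X \subset \PP^{3}_{\QQ}$ defined by $a_{1}x_{1}^{3} + a_{2}x_{2}^{3} + a_{3}x_{3}^{3} = a_{0}x_{0}^{3}$, whose complement is the plane cubic $D : a_{1}x_{1}^{3} + a_{2}x_{2}^{3} + a_{3}x_{3}^{3} = 0$ at infinity, smooth and geometrically integral under the cube-free hypothesis; its Jacobian is the CM elliptic curve $E : y^{2} = x^{3} - 432(a_{1}a_{2}a_{3})^{2}$. Purity then yields
\[
0 \to \Br X \to \Br U \xrightarrow{\partial} H^{1}_{\mathrm{et}}(D, \QQ/\ZZ),
\]
and the task becomes determining the image of $\partial$, which I would split into an algebraic and a transcendental contribution.

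For the algebraic part $\Br_{1}U := \ker(\Br U \to \Br\bar U)$: since $\bar X$ is geometrically rational, $\Br\bar X = 0$ and so $\Br X \subseteq \Br_{1}U$. For the reverse inclusion I would examine the exact sequence $0 \to \ZZ\cdot[D] \to \Pic\bar X \to \Pic\bar U \to 0$ via Hochschild--Serre: the primitivity of $[D]$ in $\Pic\bar X$, together with the fact that every unit on $\bar U$ is constant (since $[D]$ is a non-torsion divisor class), forces the cokernel of $\Br X \to \Br_{1}U$ to vanish by a direct residue analysis, giving $\Br_{1}U \cong \Br X$.

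For the transcendental quotient $\Br U / \Br_{1}U \hookrightarrow (\Br\bar U)^{G_{\QQ}}$, the rationality of $\bar X$ also kills $H^{3}_{\mathrm{et}}(\bar X, \QQ/\ZZ(1))$, so purity upgrades to an isomorphism $\Br\bar U \cong H^{1}_{\mathrm{et}}(\bar D, \QQ/\ZZ)$, whose $n$-torsion identifies as a $G_{\QQ}$-module with $E[n] \otimes \mu_{n}^{-1}$ via the Weil pairing. A Galois-module computation then shows that $E[2]^{G_{\QQ}}$ is non-trivial precisely when $432(a_{1}a_{2}a_{3})^{2}$ is a rational cube, which by $432 = 2^{4}\cdot 3^{3}$ and cube-freeness is equivalent to $a_{1}a_{2}a_{3} \equiv 2 \bmod \QQ^{\ast 3}$. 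For the remaining primes, in particular the 3-primary part (whose Galois invariants can be non-zero, e.g.\ when $a_{1}a_{2}a_{3}$ is a cube), one must check that these classes do \emph{not} lift to $\Br U$, controlled by the non-vanishing of the Hochschild--Serre transgression $d_{2}^{0,2} : (\Br\bar U)^{G_{\QQ}} \to H^{2}(G_{\QQ}, \Pic\bar U)$ on those contributions.

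The main obstacle is the exceptional case: one must promote the abstract $\ZZ/2\ZZ \subseteq (\Br\bar U)^{G_{\QQ}}$ to an honest element of $\Br U$. My plan is to exhibit an explicit 2-torsion quaternion algebra on $\sU$, plausibly a Hilbert symbol $(2, f)_{2}$ where $f$ is built from the rational 2-torsion point of $E$ afforded by the hypothesis, and to verify by direct residue calculations at every prime divisor of $\sU$ that it is unramified on $U$ while producing the predicted non-trivial residue along $D$. The 2-torsion nature of this representative then yields the splitting $\Br U \cong \Br X \oplus \ZZ/2\ZZ$.
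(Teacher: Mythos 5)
Your overall architecture (purity sequence, identification $\Br\bar U \cong H^1_{\et}(\bar D,\QQ/\ZZ)$, Weil-pairing description of the $n$-torsion, and isolating the $2$-torsion condition via $E[2]^{G_\QQ}$) is the same framework the paper uses, and the computation that $432(a_1a_2a_3)^2 \in \QQ^{*3} \iff a_1a_2a_3 \equiv 2 \bmod \QQ^{*3}$ is correct. However, there are two genuine gaps where your sketch would not go through.

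First, the algebraic part. You claim that primitivity of $[D]$ in $\Pic\bar X$ together with ``a direct residue analysis'' forces $\Br_1 U = \Br X$. This cannot be right as stated, because the conclusion is \emph{false} over $\QQ(\omega)$: for $X'\colon x_0^3+x_1^3+x_2^3+ax_3^3=0$ over $\QQ(\omega)$ one has $\Br X'/\Br\QQ(\omega) \cong (\ZZ/3\ZZ)^2$ but $\Br_1 U'/\Br\QQ(\omega) \cong (\ZZ/3\ZZ)^3$ for any smooth anticanonical $D'$, so the map $\Br X' \to \Br_1 U'$ is \emph{not} surjective there. The paper's Proposition~\ref{prop: algebaric Brauer group for affine is the same as projective} makes essential use of the hypothesis that $\QQ$ does not contain a primitive third root of unity: the image of $G_\QQ$ in $W(E_6)$ has even order, and the equality $H^1(N',(\Pic\bar X)^{N'}) = H^1(N',(\Pic\bar U)^{N'})$ is verified by a finite (computer-assisted) enumeration of subgroups precisely in the even-order case. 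Your sketch uses no hypothesis that would distinguish $\QQ$ from $\QQ(\omega)$, so it proves too much and therefore cannot be repaired without introducing that input.

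Second, the $3$-torsion. You acknowledge that one must show the Galois-invariant $3$-torsion classes in $\Br\bar U$ do \emph{not} lift, and you propose controlling this by ``the non-vanishing of the Hochschild--Serre transgression $d_2^{0,2}$.'' This is the hard core of the theorem, and the paper does not compute the transgression. Instead (Propositions~\ref{prop: Brauer group general case for two coefficients} and \ref{prop: no extra 3 torsion in Brauer group}) it argues geometrically: if $A \in \Br U$ has residue $m = \partial_D(A) \ne 0$, one restricts $A$ to lines $L$ on $X$ over suitable cubic extensions $k_i$; since $L \setminus (L\cap D) \cong \AA^1$, the restriction is constant, forcing $m$ to vanish at the point $L\cap D$. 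Combined with Lemma~\ref{lemma: emedding cyclic extensions and torsors} (injectivity of restriction to pure cubic extensions on $H^1(-,\ZZ/3\ZZ)$) and Lemmas~\ref{lemma: kernel of torsors}, \ref{lemma: kernel of torsors after pure cubic extension} (which use explicit $3$-isogenies of the Mordell curve $y^2 = x^3 + d$ and a count of $3$-torsion over the relevant cubic fields), this forces $m = 0$. None of this is visible in a transgression ``formula,'' and there is no reason to expect $d_2^{0,2}$ to be directly computable here, so this step is a real gap, not a routine verification.

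Your treatment of the $p$-torsion for $p \ge 5$ is also silently absorbed into the Weil-pairing remark; the paper handles it (Proposition~\ref{prop: no p torsion in Brauer group for p>3}) via the known $\QQ$-isogeny classes of the Mordell curves $y^2 = x^3 + d$ (Table~\ref{Tab:Isogeny classes of D}), and you would need an analogous input. For the $2$-torsion sufficiency your plan is viable in spirit; the paper avoids a general closed-form representative by proving a splitting of the Hochschild--Serre $5$-term sequence for $\ZZ/2\ZZ$-coefficients (Lemma~\ref{lemma: splitting of HS SS for two torsion}, using a Bertini line meeting $D$ in a degree-$3$ cycle so that $\sigma_i \circ \xi_i$ is multiplication by $3$, hence invertible on $2$-torsion), which shows $(\Br U/\Br X)[2] = J(D)(\QQ)[2]$ abstractly and then cites the explicit representative from \cite{CTW12} only in the normalized case $a_1 = a_2 = 1$.
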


	Building on the foundational work of Colliot-Thélène and Wittenberg \cite{CTW12}, our approach in the proof of Theorem~\ref{thm: Brauer group thm} yields a comprehensive understanding of the Brauer group of all smooth diagonal affine cubic surfaces. We perform a fine analysis of the properties of Mordell curves, which is additionally needed in the general setting we work in. The method presented here is, in fact, capable of tackling more general surfaces. Specifically, it can be adapted to study Brauer groups of affine surfaces that are the complement of a smooth irreducible anticanonical curve on a geometrically rational surface.
	
	We use the information obtained in Theorem~\ref{thm: Brauer group thm} to facilitate a detailed examination of the arithmetic of $U$ by employing the integral Brauer--Manin obstruction. As a result, for the first time failures of the integral Hasse principle on these surfaces are exhibited. We construct two rich infinite families of such examples in section \ref{sec:lower} and section \ref{sec: examples of IBMO}. This was not previously possible due to the Brauer groups of these surfaces not being known in such a generality. 
	
	To explore the frequency of integral Hasse failures and to measure how often integral strong approximation off $\infty$ holds in the family \eqref{eqn:U-main}, we either vary $a_0$, the coefficients of the cubic form $a_1$, $a_2$, $a_3$, or all $a_0$, $a_1$, $a_2$, $a_3$ in a equal-sided box. In view of Remark~\ref{rem:els}, the number of everywhere locally soluble $\sU$ up to height $B$ in the three counting problems is of magnitude $B$, $B^3$, $B^4$, respectively. We give in Propositions~\ref{prop:p divides a_0} and \ref{prop:p divides a_1 but} a sufficient criterion for the lack of a Brauer--Manin obstruction to the integral Hasse principle and for the presence of an Brauer--Manin obstruction to integral strong approximation off $\infty$. Counting surfaces failing this criterion allows us to obtain upper bounds for the amount of Hasse failures and simultaneously to estimate the number of surfaces satisfying strong approximation off $\infty$ in all three natural counting problems.
	
	We begin with the analogue of the sum of three cubes question, i.e. we fix $a_1, a_2, a_3$ and we vary $a_0$. Set $\ZZ_{\prim}^n$ for the set of $n$-tuples in $\ZZ^n$ with non-zero coprime coordinates. If $\ZZ_{\neq 0}$ stands for the non-zero integers, for any real $B\ge 1$ and $(a_1, a_2, a_3) \in \ZZ_{\prim}^3$ define
	\[
	\begin{split}
		N_{a_1, a_2, a_3}(B) 
		&= \#\left\{ a_0 \in [-B, B] \cap \ZZ_{\neq 0} \ : \ \sU(\AA_\ZZ) \neq \emptyset \text{ but } \sU(\AA_\ZZ)^{\Br} = \emptyset \right\}, \\
		N_{a_1, a_2, a_3}^{'}(B)
		&= \#\left\{ a_0 \in [-B, B] \cap \ZZ_{\neq 0} \ : \ \sU(\AA_\ZZ) \neq \emptyset \text{ and } \sU \text{ satisfies ISA off $\infty$} \right\}.
	\end{split}
	\]
	Our next result provides upper bounds for these quantities.
	
	\begin{theorem}
		\label{thm:N-a_123(B)}
		Assume that $a_1a_2a_3 \nequiv 2 \bmod \QQ^{\ast 3}$. We then have
		\[
		N_{a_1, a_2, a_3}(B), \ N_{a_1, a_2, a_3}^{'}(B) \ll_{a_1 a_2 a_3} B^{1/3},
		\] 
		as $B$ goes to infinity.
	\end{theorem}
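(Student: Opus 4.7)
The proof strategy would be to combine the structural input from Theorem~\ref{thm: Brauer group thm} with the sufficient criteria established in Propositions~\ref{prop:p divides a_0} and \ref{prop:p divides a_1 but}, and then to translate the negation of those criteria into an arithmetic condition on $a_0$ sparse enough to be counted directly.

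First, the hypothesis $a_1 a_2 a_3 \nequiv 2 \bmod \QQ^{\ast 3}$ places us in the \emph{otherwise} branch of Theorem~\ref{thm: Brauer group thm}, so that $\Br U = \Br X$ with no transcendental contribution. In particular $\Br U / \Br \QQ$ is either trivial or isomorphic to $\ZZ/3\ZZ$, so any Brauer--Manin obstruction on $\sU$ is of order dividing $3$ and is controlled by a cyclic algebra of exponent $3$ pulled back from the projective compactification. This is the input that makes the conditions of the two propositions decisive.

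Next, I would apply Propositions~\ref{prop:p divides a_0} and \ref{prop:p divides a_1 but}. Together they furnish a divisibility criterion on $(a_0,a_1,a_2,a_3)$---roughly, the existence of a prime $p$ with controlled behaviour modulo $3$ exactly dividing $a_0$, or exactly dividing one of the $a_i$ while satisfying a cubic residue constraint against the remaining coefficients---under which simultaneously $\sU(\AA_\ZZ)^{\Br}\neq\emptyset$ and ISA off $\infty$ fails. Any $a_0$ contributing to $N_{a_1,a_2,a_3}(B)$ or $N_{a_1,a_2,a_3}^{'}(B)$ must therefore violate this criterion, so it suffices to count violators.

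Finally I would estimate the number of such $a_0\in[-B,B]\cap\ZZ_{\neq 0}$. With $a_1,a_2,a_3$ fixed, the part of the criterion coming from Proposition~\ref{prop:p divides a_1 but} is either always or never in force, so the variable obstruction is the one on the prime factorisation of $a_0$. Failure of that condition forces the $p$-adic valuation $v_p(a_0)$ to lie in a prescribed (non-exactly-once) residue class modulo $3$ for every prime $p\nmid 3a_1a_2a_3$ dividing $a_0$; equivalently, $a_0 = c\,n^3$ with $c$ ranging over a finite set of cube-free integers supported on the primes of $3a_1a_2a_3$. Counting cubes with $|cn^3|\leq B$ for each such $c$ yields the bound $\ll_{a_1 a_2 a_3} B^{1/3}$.

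The main obstacle I would expect is in the reduction step just described: turning the failure of the prime-availability criterion into a clean cube-class condition on $a_0$ while absorbing the finitely many bad primes dividing $3a_1a_2a_3$ into the implicit constant, and in particular verifying that the Brauer pairing at those bad primes cannot independently produce an obstruction outside the prescribed shape. Once this cube-class shape is established, the counting is elementary.
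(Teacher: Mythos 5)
Your proposal is correct and follows essentially the same route as the paper's own proof: use the hypothesis to invoke the "otherwise" branch of Theorem~\ref{thm: Brauer group thm} so that $\Br U = \Br X$ and no transcendental contribution arises, then observe that any $a_0$ contributing to $N_{a_1,a_2,a_3}(B)$ or $N^{'}_{a_1,a_2,a_3}(B)$ must violate the sufficient criteria of Propositions~\ref{prop:p divides a_0} and \ref{prop:p divides a_1 but}, and finally count the $a_0$ that violate. Your explicit remark that the Proposition~\ref{prop:p divides a_1 but} criterion is independent of $a_0$, hence either always or never in force, is a nice clarification that the paper leaves implicit. Two small comments: first, the negation of Proposition~\ref{prop:p divides a_0} at a prime $p \nmid 3a_1a_2a_3$ forces $v_p(a_0) \in \{0\} \cup \ZZ_{\geq 3}$, which is not quite a residue class mod $3$ (that would force $v_p \equiv 0 \bmod 3$, i.e.\ $a_0 = cn^3$); the correct shape is $a_0 = \pm u m$ with $u$ supported on $3a_1a_2a_3$ and $m$ cubefull, but since cubefull integers up to $B$ are $O(B^{1/3})$ the bound is unaffected — and the paper itself uses the same slightly restrictive $\pm u b^3$ parametrisation, so this is a shared imprecision, not a flaw in your argument. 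Second, the "obstacle" you flag about bad primes producing an independent obstruction is actually not an issue: once a single prime gives a surjective invariant map, the adelic component there can always be tuned to annihilate any contribution from the remaining places, regardless of their values, so no further verification at the bad primes is needed.
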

	
	Theorem~\ref{thm:N-a_123(B)} shows how rare Hasse failures are if the cubic form is fixed, thus revealing the difficulty of finding explicit examples of them. In fact, $N_{a_1, a_2, a_3}(B)$ is zero for the two specific families $(a_{1}, a_{2}, a_{3}) = (1,1,1)$ and $(1, 1, 2)$ studied by Colliot-Thélène and Wittenberg \cite{CTW12}. Hence, finding the magnitude of $N_{a_1, a_2, a_3}(B)$, or even a lower bound for it, amounts to selecting specific choices of $(a_{1}, a_{2}, a_{3})$ for which Hasse failures exist. Such choices are extremely rare, as we shall see in the next results.
	
	We continue by varying the cubic form while $a_0 \neq 0$ stays fixed. Let
	\[
	\begin{split}
		N_{a_0}(B) 
		&= \# \left\{ (a_1, a_2, a_3) \in [-B, B]^3 \cap \ZZ_{\prim}^3 \ : \ \sU(\AA_\ZZ) \neq \emptyset \text{ but } \sU(\AA_\ZZ)^{\Br} = \emptyset \right\}, \\
		N_{a_0}^{'}(B) 
		&= \# \left\{ (a_1, a_2, a_3) \in [-B, B]^3 \cap \ZZ_{\prim}^3 \ : \ \sU(\AA_\ZZ) \neq \emptyset \text{ and } \sU \text{ satisfies ISA off $\infty$} \right\}.
	\end{split}
	\]
	Our methods establish upper bounds for these quantities, given in the next theorem.
	
	\begin{theorem}
		\label{thm:N-a_0(B)}
		We have
		\[
		N_{a_0}(B), \ N_{a_0}^{'}(B) \ll_{a_0} B^{3/2},
		\] 
		as $B$ goes to infinity.
	\end{theorem}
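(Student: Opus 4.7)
The plan is to exploit the sufficient criterion supplied by Propositions~\ref{prop:p divides a_0} and \ref{prop:p divides a_1 but}: whenever that criterion applies to a triple $(a_1, a_2, a_3)$, it simultaneously rules out an integral Brauer--Manin obstruction to the Hasse principle for $\sU$ and produces one to ISA off $\infty$, so the triple contributes to neither $N_{a_0}(B)$ nor $N_{a_0}^{'}(B)$. It therefore suffices to bound the number of primitive triples $(a_1, a_2, a_3) \in [-B, B]^3$ for which the criterion fails at every prime.

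The next step is to unpack ``the criterion fails at every prime'' as a divisibility constraint on the $a_i$. Based on the naming of the propositions, a prime $p$ which is coprime to the finite set of ``bad'' primes dividing $6 a_0$ and which divides exactly one of $a_1, a_2, a_3$ should suffice to verify the criterion. Its failure at every prime then forces every prime $p \mid a_1 a_2 a_3$ with $p \nmid 6 a_0$ to divide at least two of the coefficients. Combined with primitivity, this yields the multiplicative shape
\[
	|a_i| = c_i \cdot d_{ij}\, d_{ik}, \qquad \{i,j,k\} = \{1,2,3\},
\]
where $d_{ij} = \gcd(a_i, a_j)$ on the $6a_0$-coprime part and $c_i$ is supported only on primes dividing $6 a_0$.

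Counting now proceeds by an elementary divisor argument. The contribution of $(c_1, c_2, c_3)$ is $O_{a_0}((\log B)^{O(1)})$ since each $c_i$ is a divisor of a fixed power of $6 a_0$ bounded by $B$. For fixed such $c_i$, the conditions $|a_i| \le B$ become $d_{ij} d_{ik} \ll_{a_0} B$ for each of the three pairs, and multiplying the three inequalities and taking a square root gives
\[
	d_{12}\, d_{13}\, d_{23} \ll_{a_0} B^{3/2}.
\]
The number of triples $(d_{12}, d_{13}, d_{23})$ of pairwise coprime positive integers with product at most $Y$ is $\ll Y (\log Y)^{2}$, so altogether $N_{a_0}(B), N_{a_0}^{'}(B) \ll_{a_0} B^{3/2}$, with the logarithmic factors absorbed into the implied constant (or dealt with by a slightly more careful summation). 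The main obstacle is the precise extraction of the clean divisibility shape above from the technical criteria of the cited propositions, and in particular verifying that the primes dividing $6 a_0$, at which the local analysis is more delicate, contribute only to the $a_0$-dependent implied constant and not to the exponent $3/2$.
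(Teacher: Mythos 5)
Your overall strategy is the same as the paper's: use Propositions~\ref{prop:p divides a_0} and~\ref{prop:p divides a_1 but} to force a multiplicative constraint on $(a_1,a_2,a_3)$ whenever there is any BMO, and then count. But there are two genuine gaps.

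First, your extraction of the multiplicative shape drops a case. Proposition~\ref{prop:p divides a_1 but} has the escape clause $p^3 \mid a_i$: a prime $p \ge 17$ with $p\nmid a_0$ may divide exactly one $a_i$ without triggering the criterion, provided it does so to the third power or higher. So ``failure at every prime'' does \emph{not} force every good prime dividing $a_1a_2a_3$ to divide at least two coefficients; it forces that, or $p^3\mid a_i$. The correct shape therefore needs an extra cube factor $b_i^3$ in each $|a_i|$, exactly as in the paper's factorisation \eqref{eqn:a_i-factorisation}. There is also a subtlety your $d_{ij}=\gcd$ variables do not cleanly capture: if $p\| a_1$ and $p^2\| a_2$ (and $p\nmid a_3$), then $\gcd(a_1,a_2)$ only absorbs $p$ from $a_2$ and the leftover $p$ in $a_2$ has nowhere to go in $|a_2| = c_2 d_{12} d_{23}$. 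The paper handles this by having two independent squarefree variables $v_{12}$ and $v_{21}$ with asymmetric exponents. Both of these omissions can be patched, and the cube sums are convergent, so they don't change the exponent, but as written your factorisation is not a surjection onto the relevant set of triples.

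Second, and more seriously, your counting loses a $(\log B)^2$ that cannot be ``absorbed into the implied constant.'' Replacing the three pairwise constraints $d_{12}d_{13}\ll B$, $d_{12}d_{23}\ll B$, $d_{13}d_{23}\ll B$ by the single product constraint $d_{12}d_{13}d_{23}\ll B^{3/2}$ and then invoking the divisor bound gives $\ll B^{3/2}(\log B)^2$, which is strictly weaker than the claimed $B^{3/2}$ (and the extra factor grows with $B$, so it is not a constant). The paper avoids this by using all three inequalities simultaneously: Lemma~\ref{lem:triple-sum} shows that
\[
\sum_{k\ell\le X,\ km\le Y,\ \ell m\le Z}1 \ll (XYZ)^{1/2}
\]
with no logarithmic loss, and the remark after it notes this is sharp. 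That lemma is the essential extra ingredient your argument is missing; without something equivalent your bound is $B^{3/2}(\log B)^2$, not $B^{3/2}$.
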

	
	Lastly, we vary all four coefficients of $\sU$. For this purpose let
	\[
	\begin{split}
		N(B) 
		&= \# \left\{ (a_0, a_1, a_2, a_3) \in [-B, B]^4 \cap \ZZ_{\prim}^4 \ : \ \sU(\AA_\ZZ) \neq \emptyset \text{ but } \sU(\AA_\ZZ)^{\Br} = \emptyset \right\}, \\
		N^{'}(B) 
		&= \# \left\{ (a_0, a_1, a_2, a_3) \in [-B, B]^4 \cap \ZZ_{\prim}^4 \ : \ \sU(\AA_\ZZ) \neq \emptyset \text{ and } \sU \text{ satisfies ISA off $\infty$} \right\}.
	\end{split}
	\]
	Our final result delivers upper and lower bounds for $N(B)$, of the same magnitude modulo a small power of $\log B$, and an upper bound for $N^{'}(B)$.
	\begin{theorem}
		\label{thm:N(B)}
		We have
		\[
		\begin{split}
			\frac{B^2}{\log B} \ll N(B) &\ll B^2 (\log B)^6 \quad \text{and} \quad
			N^{'}(B) \ll B^2 (\log B)^2,
		\end{split}
		\] 
		as $B$ goes to infinity.
	\end{theorem}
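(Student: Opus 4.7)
The proof of Theorem~\ref{thm:N(B)} splits into two independent parts: the upper bounds on $N(B)$ and $N^{'}(B)$, and the lower bound on $N(B)$. The key observation underlying both upper bounds is that any quadruple $(a_0,a_1,a_2,a_3)$ contributing to either $N(B)$ or $N^{'}(B)$ must fail the sufficient criteria of Propositions~\ref{prop:p divides a_0} and \ref{prop:p divides a_1 but}: for $N(B)$ because the criteria preclude a Brauer--Manin obstruction to the integral Hasse principle, and for $N^{'}(B)$ because the same criteria force a Brauer--Manin obstruction to integral strong approximation off $\infty$. Consequently both counts reduce to bounding the number of primitive quadruples in $[-B,B]^4$ that fail the criteria at every suitable prime.

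For the upper bound I would first restrict to quadruples with $\Br U/\Br \QQ \neq 0$, invoking Theorem~\ref{thm: Brauer group thm} together with the Colliot-Thélène--Kanevsky--Sansuc description of $\Br X$ in \cite{CTKS87}. The transcendental condition $a_1 a_2 a_3 \equiv 2 \bmod \QQ^{\ast 3}$ and the algebraic condition on $\Br X/\Br \QQ$ both force certain monomials in the $a_i$ to lie in prescribed cube classes in $\QQ(\zeta_3)^{\ast}$; a standard divisor-sum count then shows that at most $O(B^2(\log B)^c)$ primitive quadruples in $[-B,B]^4$ meet these conditions. Within this set, failing the criteria at every suitable prime imposes further multiplicative constraints on the $a_i$, and an elementary hyperbola-type estimate refines the bound to $B^2(\log B)^6$ for $N(B)$ and $B^2(\log B)^2$ for $N^{'}(B)$, the different exponents reflecting the distinct shape of the criteria ensuring each conclusion.

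For the lower bound $N(B) \gg B^2/\log B$ I would use the explicit family of Hasse failures to be constructed in \S\ref{sec:lower}. The plan is to isolate a two-parameter subfamily, letting two of the coefficients range freely through intervals of length $\asymp B$ subject to a single Chebotarev-type congruence condition which, for instance forcing one parameter to be a prime in a fixed residue class modulo $9$, is satisfied with density $\gg 1/\log B$ by Dirichlet's theorem on primes in arithmetic progressions. For each admissible choice the cube-class relations singling out a $\ZZ/3\ZZ$ summand in $\Br U/\Br \QQ$, everywhere-local-solubility, and non-vanishing of the global Brauer--Manin pairing hold uniformly by design. The primitivity constraint $(a_0,a_1,a_2,a_3) \in \ZZ_{\prim}^4$ is preserved with positive proportion after a routine Möbius inversion.

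The main technical obstacle lies in the lower bound, where one must certify uniformly across a two-dimensional family that: (i) everywhere-local-solubility persists, including at the primes dividing $3 a_0 a_1 a_2 a_3$ and at $\infty$; (ii) the local invariants of the distinguished Brauer class sum to a fixed non-zero element of $\QQ/\ZZ$ on every adelic point; and (iii) the Chebotarev condition retains the full density $\gg 1/\log B$, rather than being absorbed into a thinner count. Step (ii) is the crux: it requires pinning down the local evaluations at each bad prime so that all but one are forced to vanish, leaving a single order-three contribution at the distinguished prime that obstructs every adelic point simultaneously.
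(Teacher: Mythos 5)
Your high-level architecture matches the paper's: failure of the criteria in Propositions~\ref{prop:p divides a_0} and \ref{prop:p divides a_1 but} drives both upper bounds, and an explicit two-parameter subfamily drives the lower bound. However, two of your mechanisms are concretely wrong.

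\emph{Upper bound.} You propose to first restrict to quadruples with $\Br U/\Br\QQ \neq 0$ and claim this already cuts the count to $O(B^2(\log B)^c)$. That is not so for the algebraic part: $\Br_1 U/\Br_0 U \neq 0$ is equivalent to no cross ratio $a_ia_j/a_ha_\ell$ being a cube (Proposition~\ref{prop: algebaric Brauer group for affine is the same as projective}), which is the \emph{generic} case and holds for $\gg B^4$ quadruples. Only the transcendental condition $a_1a_2a_3\equiv 2\bmod\QQ^{*3}$ is sparse. Moreover, you attribute the gap between $(\log B)^6$ for $N(B)$ and $(\log B)^2$ for $N'(B)$ to ``the distinct shape of the criteria''; in fact the criteria-failure count is the \emph{same} $O(B^2(\log B)^2)$ for both, coming from the factorisation \eqref{eqn:a_i-factorisation} and Lemma~\ref{lem:sextupple-sum}. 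The extra $(\log B)^4$ in $N(B)$ comes from Proposition~\ref{prop:trans}: the Propositions only control \emph{algebraic} Brauer classes, so a surface with a non-trivial transcendental Brauer group could satisfy the criteria and yet still have an obstruction. Hence $N(B)\ll M^{\tr}(B) + B^2(\log B)^2\ll B^2(\log B)^6$. For $N'(B)$ the criteria force failure of ISA no matter what the transcendental part is, so no such correction is needed. Your proposal does not explain this asymmetry and cannot produce the exponent $6$.

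\emph{Lower bound.} You propose a single Chebotarev-type constraint, e.g.\ forcing one of the two parameters to be prime in a fixed residue class mod $9$, giving density $\gg 1/\log B$. This is not the right mechanism. The paper's family \eqref{eqn:U-lower-bound} needs \emph{all} prime divisors of \emph{both} parameters $a$ and $b$ to be $\equiv 2\bmod 3$; otherwise Lemmas~\ref{lem:p-not-dividing-a_0a_3} and \ref{lem:a_0/a_3 unit} are unavailable at the primes $p\equiv 1\bmod 3$ dividing the free parameter and the local invariant may surject there, destroying the obstruction. The set of $n\leq x$ with all prime factors $\equiv 2\bmod 3$ has density $\asymp 1/\sqrt{\log x}$ (a Landau/Tauberian count, done in the paper via $L$-functions and Perron's formula), so the pair-count is $\asymp (B/\sqrt{\log B})^2 = B^2/\log B$. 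If instead you make one parameter prime ($\asymp B/\log B$ choices) you must \emph{still} constrain the other parameter's prime factors to be $\equiv 2\bmod 3$ ($\asymp B/\sqrt{\log B}$ choices), giving $B^2/(\log B)^{3/2}$, which is too small; if you leave the other parameter free, you cannot certify step (ii) of your own plan. The Dirichlet-primes density is therefore either insufficient or inapplicable.
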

	
	We construct in section \ref{sec:lower} and in section \ref{sec: examples of IBMO} the first examples of diagonal affine cubic surfaces with a Brauer--Manin obstruction to the integral Hasse principle. Moreover, all of the surfaces in section \ref{sec:lower} and section \ref{sec: examples of IBMO} have a non-empty set of rational points (see Remarks~\ref{rem:Q-point} and \ref{remark: CT conj implies that Ulpq has a rational point}) and thus our examples of integral Hasse failures do not follow trivially from Hasse failures for rational points. Colliot-Th\'{e}l\`ene, Kanevsky and Sansuc constructed an infinite family of cubic surfaces, failing the Hasse principle for rational points in \cite[\S 7, Prop.~5]{CTKS87}. Each of their surfaces produces integral Hasse failures by taking away exactly one of the hyperplanes corresponding to the zero locus of a coordinate. It is relatively easy to see that the number of failures coming from Colliot-Th\'{e}l\`ene, Kanevsky and Sansuc's family counted by $N(B)$ is at most $B/(\log B)^2$. Hence, our lower bound does not follow from the results in \cite{CTKS87}.
	
	The lower bound of Theorem~\ref{thm:N(B)} is obtained by counting surfaces of the family featured in section \ref{sec:lower}. At the same time, the counter-examples to the integral Hasse principle appearing in section \ref{sec: examples of IBMO} are interesting in their own right. Our approach in section \ref{sec: examples of IBMO} builds on the work of Colliot-Th\'{e}l\`ene, Kanevsky and Sansuc \cite{CTKS87} and does not require any knowledge of explicit representatives of Brauer elements, unlike in section \ref{sec:lower}. This is particularly handy, as explicit representatives of Brauer elements are genuinely very hard to get a hold of.
	
	Finally, we note that the number of surfaces considered in Theorem~\ref{thm:N-a_0(B)}, whose transcendental Brauer group is non-trivial, is negligible compared to the upper bounds established there. This is shown in Proposition~\ref{prop:trans}. At the same time, all possible failures of the integral Hasse principle and integral strong approximation off $\infty$ counted in $N_{a_1, a_2, a_3}(B)$, $N_{a_1, a_2, a_3}^{'}(B)$ and in $N(B)$, $N^{'}(B)$ may come from transcendental Brauer elements.
	
	\subsection*{Integral points in families}
	In recent years the quantitative study of arithmetic properties in families has rapidly expanded its scope of investigation. Several papers focus on the relevance of the Brauer--Manin obstruction for rational points in families; \cite{BB14}, \cite{Rom19} for Châtelet surfaces, \cite{GLN22}, \cite{San23} for certain classes of K3 surfaces, \cite{MS22} for quartic del Pezzo surfaces, \cite{BB14b} for coflasque tori, etc. It was shown in the proof of \cite[Thm.~1.6]{BBL16} that under certain geometrical assumptions weak approximation fails and at the same time the Brauer--Manin obstruction does not obstruct the Hasse principle for rational points 100\% of the time, even if single members of the family might have a Brauer--Manin obstruction to the Hasse principle. No such framework exists for the study of integral points in families yet. There are very few examples studied in the literature; \cite{Mit17}, \cite{San22}, \cite{MNS22} studied affine quadrics surfaces, \cite{Ber17}, \cite{Mit20}, \cite{BST22} considered affine generalized Châtelet surfaces and Markoff surfaces were treated in \cite{GS22}, \cite{LM20}, \cite{CTWX20}.
	
	The surface $U$ is the complement of an anticanonical curve on a del Pezzo surface of degree 3. In particular, it is a log K3 surface in the sense of \cite[Def. 2.4]{H17}. The integral Brauer--Manin obstruction was successfully employed to study integral points for such types of log K3s in \cite{CTW12}, \cite{U23} for affine cubic surfaces, \cite{JS17}, \cite{L23} for affine del Pezzo surfaces of degree $4$ and $5$ and \cite{LM20},\cite{CTWX20} for Markoff type log K3. The investigation on the frequency of Hasse failures for Markoff type log K3s was pioneered in \cite{GS22} resulting in a wide open conjecture for that quantity. This conjecture was then explored in \cite{LM20}, \cite{CTWX20} by measuring the amount of Hasse failures with or without a Brauer--Manin obstruction. No integral Brauer--Manin obstruction had been exhibited in the diagonal cubic case before the present work.
	
	\subsection*{Outline of the paper} This article is organised as follows. Section~\ref{sec: Integral Brauer--Manin obstruction} recalls the integral Brauer--Manin obstruction. Section~\ref{sec: Brauer group} is dedicated to finding the Brauer group of the affine surfaces (\ref{eqn:U-main}) and the proof of Theorem~\ref{thm: Brauer group thm}. We describe explicit generators of $\Br U / \Br \QQ$ in special cases in section~\ref{sec:2-coeff}. In section~\ref{sec: computing inv} we describe how to compute the integral Brauer--Manin set for a generic choice of diagonal affine cubic surface. In section~\ref{sec:3-coeff} we study the integral Brauer--Manin obstruction for a three coefficient family which is later used in section~\ref{sec:lower} to prove the lower bound in Theorem~\ref{thm:N(B)}. Section~\ref{sec:upper} is reserved for the upper bounds in Theorems~\ref{thm:N-a_123(B)}, \ref{thm:N-a_0(B)} and \ref{thm:N(B)}. Finally, the results of section~\ref{sec: computing inv} are implemented in section~\ref{sec: examples of IBMO}, where we give a family of surfaces failing the integral Hasse principle without the need of knowing explicit representatives of Brauer elements.
	
	\subsection*{Notation}
	We reserve $K$ for a number field with ring of integers $\sO_K$. Throughout $\Omega_K$ will denote the set of places of $K$. For $v \in \Omega_K$ we shall use $K_v$ for the completion of $K$ at $v$ and $\sO_v$ for its ring of integers with the convention $\sO_v = K_v$ for all infinite places. If $\frakp$ is a prime ideal above $p$ in $K$, we shall denote by $K_{\frakp}$ the corresponding extension of $\Qp$, and by $\sO_{\frakp}$ the ring of integers of $K_{\frakp}$ to emphasise the fixed choice of $\frakp$. We fix $\omega \in \bar{\QQ}$ such that $\omega^2 + \omega + 1 = 0$ and $k:=\mathbb{Q}(\omega)$. 
	
	Let $V$ be a variety over $K$. For an $\sO_K$-integral model $\sV$ of $V$, we define the \emph{integral adelic set} of $\sV$ as	$\mathcal{V}(\mathbb{A}_{\sO_K}):= \prod_{v \in \Omega_K} \mathcal{V}(\sO_v)$. We denote by $V(\mathbb{A}_{K})$ the set of adelic points on $V$, that is the restricted product $V(\AA_K) = \prod_{v \in \Omega_K}' (V(K_v), \sV(\sO_v))$. If $S$ is a scheme over a ring $R$ and $R\rightarrow T$ is a morphism, we denote by $S_{T}$ the base change $S_{T}:=S\times_{\Spec{R}}\Spec T$. For a field $F$ with a fixed algebraic closure $\bar F$ we will write $\bar{S}$ for the base change $\bar{S}:=S\times_{\Spec F}\Spec \bar{F}$ for an $F$-scheme $S$. 
	
	We fix a choice of \emph{(smooth) compactification $X \subset \PP_{\QQ}^3$ of $U$}, given in \eqref{eqn:U-main}, which is
	\[
	X: \quad a_1x_1^3 + a_2x_2^3 + a_3x_3^3 = a_0x_0^3.
	\]
	Furthermore, we fix $D$ for the divisor on $X$ defined by $D:=\{x_{0}=0\}$, unless otherwise stated, and hence $U = X \setminus D$. Integral models of $U$, $X$ and $D$ will be denoted by $\sU$, $\sX$ and $\sD$, respectively. They are assumed to be given by the defining equation of $U$, $X$ and $D$, respectively, unless otherwise stated. 

	\begin{acknowledgements}
		We would like to thank Olivier Wittenberg for many useful conversations and helping with the proof of Proposition~\ref{prop:2-torsion}. We thank Lyubcho Baltadjiev, Tim Browning, Daniel Loughran, Ross Paterson and Tim Santens for enlightening discussions. We are also grateful to the anonymous referees for their helpful comments. The first named author was supported by UKRI MR/V021362/1. The second named author was partially supported by Horizon Europe 2023 MSCA postdoctoral fellowship 101151205 -- GIANT, funded by the European Union, by grant DE 1646/4-2 of the Deutsche Forschungsgemeinschaft and by scientific program ``Enhancing the Research Capacity in Mathematical Sciences (PIKOM)'', No. DO1-67/05.05.2022 of the Ministry of Education and Science of Bulgaria.
	\end{acknowledgements}
	
	\section{Integral Brauer--Manin obstruction}
	\label{sec: Integral Brauer--Manin obstruction}
	In this section we recall the integral Brauer--Manin obstruction as introduced by Colliot-Thélène and Xu \cite[\S1]{CTX09}.
	\begin{definition}[Brauer--Grothendieck group]
		Let $V$ be a scheme over a field $F$. The cohomological Brauer group of $V$ is the second étale cohomology group $\Br V := \HH^{2}_{\et}(V,\mathbb{G}_{m})$. We have the filtration 
		\[
		0 \subseteq \Br_{0} V \subseteq \Br_{1} V \subseteq \Br V,
		\]
		where $\Br_{0} V:=\text{Im}\(\Br F \rightarrow \Br V\)$ and $\Br_{1} V:=\ker(\Br V \rightarrow \Br \bar{V})$. We call $\Br_{1} V$ the \emph{algebraic Brauer group of $V$} and $\Br V/\Br_{1} V$ the \emph{transcendental Brauer group of $V$}.
	\end{definition} 
	
	For all places $v \in \Omega_{K}$ there exists a canonical injective map
	$\inv_{v}:\Br K_{v} \rightarrow \mathbb{Q}/\mathbb{Z}$ \cite[Def.~13.1.7]{CS21}, whose image is $\{0,1/2\}$ if $v$ is a real place, $0$ if it is a complex place and is an isomorphism if $v$ is finite \cite[Thm.~13.1.8]{CS21}. 
	
	Let $\mathcal{V}$ be a separated scheme of finite type over $\sO_K$ and let $V$ be its base change to $K$. For every $\alpha \in \Br V$ there exists a finite set of places $S_{\alpha} \subseteq \Omega_K$ such that the invariant map composed with evaluation $\inv_v \alpha : \sV (\sO_v) \rightarrow \QQ/\ZZ$ is zero for all $v \not\in S_{\alpha}$, \cite[\S1]{CTX09}. This shows that the Brauer--Manin pairing, as given below, is well defined:
	\[
	\begin{split}
		\mathcal{V}(\mathbb{A}_{\sO_K}) \times \Br V &\longrightarrow \mathbb{Q}/\mathbb{Z}, \\ 
		\left((x_{v})_{v},\alpha\right) &\longmapsto \sum\limits_{v\in \Omega_K} \inv_{v}(\alpha(x_{v})).
	\end{split}
	\] 
	Define the \emph{integral Brauer--Manin set} $\mathcal{V}(\mathbb{A}_{\sO_{K}})^{\Br}$ as the left kernel of this paring. Since the following diagram commutes:
	\begin{center}
		\begin{tikzcd}
			& & \sV \(\sO_K\) \arrow[r,hook] \arrow[d,"\alpha"] & \sV\left(\AA_{\sO_K}\right) \arrow[d,"\alpha"] & & \\
			& 0 \arrow[r] & \Br K \arrow[r] & \bigoplus\limits_{v \in \Omega_K} \Br K_v \arrow[r,"\sum_v \inv_v"] & \QQ/\ZZ \arrow[r] & 0,
		\end{tikzcd}
	\end{center}
	where exactness of the bottom row is implied by the Albert--Brauer--Hasse--Noether theorem and class field theory, we get a chain of inclusions
	\[
	\sV(\sO_K) 
	\subseteq \sV (\AA_{\sO_K})^{\Br}
	\subseteq \sV(\AA_{\sO_K}).
	\]
	
	\begin{definition}
		\label{def:BMO}
		There is a \emph{Brauer--Manin obstruction to the integral Hasse principle for $\sV$}, or $\sV$ is a \emph{(integral) Hasse failure}, if $\sV(\AA_{\sO_K}) \neq \emptyset$ but $\sV (\AA_{\sO_K})^{\Br} = \emptyset$. If $B \subset \Br V$, we say that there is a \emph{Brauer--Manin obstruction to the integral Hasse principle for $\sV$ coming from $B$}, if $\sV(\AA_{\sO_K}) \neq \emptyset$ but $\sV (\AA_{\sO_K})^{B} = \emptyset$.
	\end{definition}
	
	Even if there is no Brauer--Manin obstruction to the integral Hasse principle, the following two notions can be obstructed by the Brauer--Manin construction. This is seen in Sections \ref{sec:upper} and \ref{sec: examples of IBMO}.
	
	\begin{definition}
		\label{def:SA-and-WA}
		We say $V$ satisfies \emph{strong approximation} (respectively \emph{weak approximation}) if the diagonal image $V(K)\hookrightarrow V(\AA_{K})$ (respectively $V(K)\hookrightarrow \prod_{v\in \Omega_{K}} V(K_{v})$) is dense. We say $\sV$ satisfies \emph{integral strong approximation off $\infty$} if the diagonal image $\sV(\sO_K) \hookrightarrow \proj_{\infty}\sV(\AA_{\sO_{K}})$ is dense, where $\proj_{\infty}:\mathcal{V}(\mathbb{A}_{\mathcal{O}_{K}})\rightarrow \prod_{v\nmid \infty}\mathcal{V}(\mathcal{O}_{v})$ means the projection to the finite adeles $\prod_{v\nmid \infty}\mathcal{V}(\mathcal{O}_{v})$. 
	\end{definition}
	
	\begin{definition}
		\label{def:BMO-SA-WA}
		There is a \emph{Brauer--Manin obstruction to strong approximation} (respectively \emph{integral strong approximation off $\infty$}) if $V(\AA_{K})^{\Br}$ (respectively $\proj_{\infty} \sV(\AA_{\sO_K})^{\Br}$) is a strict subset of the adeles $V(\mathbb{A}_{K})$ (respectively finite integral adeles $\prod_{v\nmid \infty}\mathcal{V}(\mathcal{O}_{v})$).
	\end{definition}
	
	\begin{remark}
		\label{rem:els}
    Questions about local-global principles are non-trivial only for varieties with a non-empty adelic space. For completeness, we briefly discuss the amount of everywhere locally soluble surfaces in the three counting problems set-up in the introduction here. This analysis is not needed in the remaining of the paper. With notation as in the introduction, for any real $B \ge 1$ let
  	\[
  	\begin{split}
  		N_{a_1, a_2, a_3}^{\text{ELS}}(B) 
  		&= \#\left\{ a_0 \in [-B, B] \cap \ZZ_{\neq 0} \ : \ \sU(\AA_\ZZ) \neq \emptyset \right\} \text{ for } (a_1, a_2, a_3) \in \ZZ_{\prim}^3, \\
  		N_{a_0}^{\text{ELS}}(B) 
  		&= \# \left\{ (a_1, a_2, a_3) \in [-B, B]^3 \cap \ZZ_{\prim}^3 \ : \ \sU(\AA_\ZZ) \neq \emptyset \right\} \text{ for } a_0 \in \ZZ_{\neq 0}, \\
  		N^{\text{ELS}}(B) 
  		&= \# \left\{ (a_0, a_1, a_2, a_3) \in [-B, B]^4 \cap \ZZ_{\prim}^4 \ : \ \sU(\AA_\ZZ) \neq \emptyset \right\}.
  	\end{split}
  	\]

		Then, the limits
		\[
			\lim_{B \to \infty} \frac{N_{a_1, a_2, a_3}^{\text{ELS}}(B)}{B}, \quad
			\lim_{B \to \infty} \frac{N_{a_0}^{\text{ELS}}(B)}{B^3}, \quad
			\lim_{B \to \infty} \frac{N^{\text{ELS}}(B)}{B^4}
		\]
		exist and each of them equals a positive constant. This may be verified, for example, with the help of \cite[Lem.~3.1]{BBL16}, which approximates the number of lattice points in an archimedean region subject to local conditions at each finite place. To apply the result one needs to check three assumptions. The second assumption asks the region and the local conditions to have a positive measure, and this is easily seen along the lines of the proof of \cite[Thm.~2.2]{BBL16}. The first conditions requires that the boundary of the region at each place has measure $0$, which follows from the proof of \cite[Thm.~2.2]{BBL16} modified for $\Zp$-points on $\sU$. The third condition imposes that the combined condition at all finite places are not too restrictive. For our application this follows from a completely analogous analysis to the one featured in the proof of \cite[Thm.~1.1]{Mit17} as a prime has to divide at least two of the $a_i$ for the lack of $\Zp$-points.
	\end{remark}

	\section{Brauer groups of diagonal affine cubic surfaces}
	\label{sec: Brauer group} 
	This section is dedicated to finding the Brauer group of the diagonal affine cubic surfaces $U$ over $\QQ$ given in \eqref{eqn:U-main}. Recall that $X$ is its compactification and $D$ is the boundary divisor of $X$ such that $U = X \setminus D$.
	
	\subsection{Algebraic Brauer group}\label{subsec: algebaric Brauer group}
	In this subsection we compute the algebraic Brauer group of $U$ over a number field $K$ that does not contain a primitive third root of unity.
	
	\begin{proposition}\label{prop: algebaric Brauer group for affine is the same as projective}
		Let $U\to X$ over $K$ be a smooth compactification of the affine diagonal cubic given by \eqref{eqn:U-main}.
		\begin{enumerate}[label=\emph{(\arabic*)}]
			\item The natural map $\Br X\rightarrow \Br_{1}U$ is an isomorphism,
			\item
			$
			\Br_{1}U/\Br_{0}U =
			\begin{cases}
				0 &\text{ if a cross ratio } a_{i}a_{j}/a_{h}a_{\ell}\in K^{\ast 3}, \\
				\mathbb{Z}/3\mathbb{Z} &\text{otherwise}.
			\end{cases}
			$
			\item $\Br_{1} U/\Br_{0}U$ is generated by $\mathcal{A}':=\Cores_{K(\omega)/K}\mathcal{A}$ where $\mathcal{A}$ is an element of $\Br_{1}U_{K(\omega)}/\Br_{0}U_{K(\omega)}$.
		\end{enumerate}
	\end{proposition}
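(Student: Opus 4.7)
The plan is to deduce all three parts from the Hochschild--Serre spectral sequence applied to $\GG_m$ on $X$ and on $U$, combined with the classical computation of $\Br X / \Br_0 X$ for smooth diagonal cubic surfaces in \cite[\S1, Prop.~1]{CTKS87}. Since $\bar X$ is a smooth projective geometrically rational surface, $\Br \bar X = 0$, and since $K$ is a number field, $\HH^3(K, \bar K^\ast) = 0$. The seven-term Hochschild--Serre sequence therefore yields
\[
	\Br X / \Br_0 X \xrightarrow{\sim} \HH^1(K, \Pic \bar X).
\]
The same argument applies for $U$ provided $\HH^0(\bar U, \GG_m) = \bar K^\ast$, which follows from the divisor exact sequence
\[
	1 \to \bar K^\ast \to \HH^0(\bar U, \GG_m) \to \ZZ[\bar D] \to \Pic \bar X \to \Pic \bar U \to 0
\]
together with the fact that $[\bar D] = -K_{\bar X}$ is a primitive (in particular nonzero) class in $\Pic \bar X$. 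One obtains $\Br_1 U / \Br_0 U \xrightarrow{\sim} \HH^1(K, \Pic \bar U)$.

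The natural map $\Br X \to \Br_1 U$ is injective by Grothendieck purity and is compatible with the two identifications above, so (1) is equivalent to showing that the map $\HH^1(K, \Pic \bar X) \to \HH^1(K, \Pic \bar U)$ induced by the quotient $\Pic \bar X \to \Pic \bar U$ is an isomorphism. Injectivity is immediate from $\HH^1(K, \ZZ) = 0$ applied to the short exact sequence $0 \to \ZZ[\bar D] \to \Pic \bar X \to \Pic \bar U \to 0$ of Galois modules with trivial action on the $\ZZ$-factor. Surjectivity amounts to the vanishing of the connecting homomorphism
\[
	\delta : \HH^1(K, \Pic \bar U) \longrightarrow \HH^2(K, \ZZ) = \HH^1(K, \QQ/\ZZ),
\]
and this is the main obstacle. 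The plan is to pass, by inflation--restriction, to a finite splitting field $L/K$ of $\Pic \bar X$, so that $G := \Gal(L/K)$ acts through a subgroup of the Weyl group $W(E_6)$, and then verify the vanishing of $\delta$ by a case analysis over the finitely many Galois action types arising on diagonal cubics, in each case either producing a $G$-equivariant splitting of $0 \to \ZZ \to \Pic \bar X \to \Pic \bar U \to 0$ or computing $\delta$ explicitly. This Galois-cohomological bookkeeping is the technical heart of (1).

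Part (2) is then immediate from (1) and the classical computation in \cite[\S1, Prop.~1]{CTKS87}, which determines $\Br X / \Br_0 X$ to be trivial or cyclic of order three according to whether some cross-ratio $a_i a_j / (a_h a_\ell)$ is a cube in $K^\ast$. For (3), observe that by (2) the group $\Br_1 U / \Br_0 U$ is annihilated by $3$, while $[K(\omega):K] = 2$ is coprime to $3$, so the composition $\Cores_{K(\omega)/K} \circ \Res_{K(\omega)/K}$, being multiplication by $2$, is an isomorphism of $\Br_1 U / \Br_0 U$. Hence if $\alpha$ generates $\Br_1 U / \Br_0 U$, the class $\mathcal{A} := \Res_{K(\omega)/K}(\alpha) \in \Br_1 U_{K(\omega)} / \Br_0 U_{K(\omega)}$ is nonzero and the identity $\Cores_{K(\omega)/K}(\mathcal{A}) = 2\alpha$ shows that $\mathcal{A}' := \Cores_{K(\omega)/K}(\mathcal{A})$ generates $\Br_1 U / \Br_0 U$, as required.
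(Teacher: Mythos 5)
Your overall framework is the same as the paper's: the Hochschild--Serre spectral sequence identifies $\Br X / \Br_0 X$ with $\HH^1(K, \Pic \bar X)$ and $\Br_1 U / \Br_0 U$ with $\HH^1(K, \Pic \bar U)$, and part (1) reduces to showing that the pushforward along $\Pic \bar X \twoheadrightarrow \Pic \bar U$ is an isomorphism on $\HH^1$. Your argument for (2) is exactly the paper's, and your argument for (3) --- using $\Cores_{K(\omega)/K} \circ \Res_{K(\omega)/K} = 2$ on a $3$-torsion group --- is a clean simplification of the paper's appeal to \cite[Thm.~3.8.5]{CS21}; both establish that a generator $\alpha$ can be written as a corestriction.

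The gap is in part (1). You correctly reduce to a Galois-cohomological verification over a finite splitting field, and correctly note that injectivity is immediate from $\HH^1(K,\ZZ) = 0$, but you then defer the surjectivity (vanishing of $\delta$) to an unspecified ``case analysis,'' and --- critically --- you never invoke the standing hypothesis from \S3.1 that $K$ does \emph{not} contain a primitive third root of unity. This hypothesis is not decorative: without it the statement is false. The paper's own Remark immediately after the proposition exhibits a diagonal cubic over $\QQ(\omega)$ with $\Br X'/\Br \QQ(\omega) \cong (\ZZ/3\ZZ)^2$ but $\Br_1 U'/\Br\QQ(\omega) \cong (\ZZ/3\ZZ)^3$, so the map is \emph{not} an isomorphism there. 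Concretely, when the paper enumerates the relevant conjugacy classes of subgroups $N' \subseteq W(E_6)$ and computes both $\HH^1(N',(\Pic\bar X)^{N'})$ and $\HH^1(N',(\Pic\bar U)^{N'})$, it finds four odd-order subgroups where the two disagree. The decisive observation is that $\omega \notin K$ forces $2 \mid \#\Gal(L/K)$ (since $K(\omega) \subseteq L$), which rules out precisely those bad odd-order cases and leaves only subgroups where the two cohomology groups agree. Your plan of ``producing a $G$-equivariant splitting of $0 \to \ZZ \to \Pic\bar X \to \Pic\bar U \to 0$ or computing $\delta$ explicitly'' would run into those odd-order cases and stall unless you explain why they cannot occur --- so you need to bring the hypothesis $\omega \notin K$ into the argument and explain how it selects the even-order subgroups.
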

	
	In general, one can compute $\Br_1 U$ modulo constants following \cite[Prop.~2.1]{BL19}, since the divisor $D$ is irreducible. Using the fact $\HH^{0}(\bar{X},\mathbb{G}_{m})=\bar{K}^{\ast}=\HH^{0}(\bar{U},\mathbb{G}_{m})$ and $\HH^{3}(K,\bar{K}^{\ast})=0$ the Hochschild--Serre spectral sequence gives us natural group isomorphisms
	\[
	\Br X/\Br_{0}X \cong \HH^{1}(K,\Pic \bar{X})\quad \text{ and } \quad \Br_{1}U/\Br_{0}U \cong \HH^{1}(K,\Pic \bar{U}).
	\]
	As $D$ is irreducible we have the exact sequence
	\[
	0\rightarrow \mathbb{Z}\xrightarrow[]{i} \Pic \bar{X}\xrightarrow[]{\pi}\Pic \bar{U}\rightarrow 0,
	\]
	where $i$ is defined by $1 \mapsto [D]=-K_X$. The canonical class $K_X$ is primitive in $\Pic \bar{X} \cong \mathbb Z^7$, so $\Pic \bar{U}$ and $\Pic \bar{X}$ are both torsion-free. We will use the following notion to descend to a finite field extension.
	\begin{definition}
		Let $S$ be a smooth cubic surface over a number field $K$. The \emph{splitting field} of $S$ is the minimal normal extension $L$ of $K$ such that $\Pic S_{L}\xrightarrow{\sim}\Pic \bar{S}$.
	\end{definition}

	As $\Gal(\bar{L}/L)$ is a profinite group acting trivially on the torsion-free module $\Pic U_L$ the inflation-restriction sequence gives us $\Br_{1}U/\Br_{0}U \cong \HH^{1}(\Gal(L/K),\Pic U_L)$ where $L$ is the splitting field of $X$. In general, $\Gal(L/K)$ can only act in finitely many different ways on $\Pic X_L$. This leads to a computational verification of Proposition \ref{prop: algebaric Brauer group for affine is the same as projective}.

	\begin{proof}[Proof of Proposition \ref{prop: algebaric Brauer group for affine is the same as projective}]
		Let $X_{\alpha_{1},\alpha_{2},\alpha_{3}}$ be the surface \[
		X_{\alpha_{1},\alpha_{2},\alpha_{3}}:x_{0}^3+\alpha_{1}x_{1}^3+\alpha_{2}x_{2}^3+\alpha_{3}x_{3}^3=0\subset \mathbb{P}^{3}
		\] over the field $E:=K(\alpha_{1},\alpha_{2},\alpha_{3})$ where $\alpha_{i}$ are purely transcendental elements, the splitting field of this surface is given by the extension $E':=E(\omega,\sqrt[3]{\alpha_{1}},\sqrt[3]{\alpha_{2}},\sqrt[3]{\alpha_{3}})$ where $\omega\in \bar{K}$ such that $\omega^2+\omega+1=0$. There exists a primitive element $\beta\in E'$ such that $E'=E(\beta)$, with minimal polynomial $f\in E[x]$. For any $L/K$ obtained by specialising $\alpha_{i}$ in $K$, where $f$ specialises to a separable polynomial, gives an embedding $\Gal(L/K)\hookrightarrow \Gal(E'/E)$ \cite[61.1]{VDW91}. 
		This shows that the splitting field for $X$ is given by $L=K\left(\omega,\sqrt[3]{a_{1}/a_{0}},\sqrt[3]{a_{2}/a_{0}},\sqrt[3]{a_{3}/a_{0}}\right)$ and $\Gal(L/K)\hookrightarrow \Gal(E(\omega,\sqrt[3]{\alpha_{1}},\sqrt[3]{\alpha_{2}},\sqrt[3]{\alpha_{3}})/E)$. We get a chain of inclusions
		\[
		\Gal(K(\omega)/K)\hookrightarrow \Gal(L/K)\hookrightarrow \Gal(E'/E)\cong C_{3}^3 \rtimes C_{2}
		\] which act compatibly on $\Pic \bar{X}\xrightarrow[]{\sim} \Pic \bar{X}_{\alpha_{1},\alpha_{2},\alpha_{3}}$. From here we can determine the image of the representation $\rho \colon \Gal(\bar{K}/K)\rightarrow W(E_{6})$ induced by the action of $\Gal(\bar{K}/K)$ on $\Pic \bar{X}$. Note that this action is trivial for element in $\Gamma_L := \Gal(\bar{K}/L)$. The action is determined by the image $N$ of $\rho$, which is isomorphic to $\Gal(\bar{K}/K)/\Gamma_L$. We enumerate all such subgroups $N$ of the Weyl group $W(E_{6})$, which are isomorphic to $C_{3}^3 \rtimes C_{2}$, up to conjugacy. Next, we compute $\HH^1(N', (\Pic \bar{U})^{\Gamma_L})$ for all subgroups $N' \subseteq N$. There are 8 even order subgroups and 4 odd order subgroups where $\HH^{1}(N',(\Pic \bar{X})^{\Gamma_L})=\HH^{1}(N',(\Pic \bar{U})^{\Gamma_L})$ and 4 odd order subgroups where $\HH^{1}(N',(\Pic \bar{X})^{\Gamma_L}) \neq\HH^{1}(N',(\Pic \bar{U})^{\Gamma_L})$.
		
		As $K$ does not contain a primitive third root of unity $2\mid \#N'$, hence \[
		\Br X/\Br_{0}X\cong \HH^{1}(N',(\Pic \bar{X})^{N'}) \xrightarrow[]{\sim}\HH^{1}(N',(\Pic \bar{U})^{N'})\cong \Br_{1}U/\Br_{0}U.
		\] As $\Br_{0}X\xrightarrow[]{\sim} \Br_{0}U$ we have the following commutative diagram \[
		\begin{tikzcd}
			0 \arrow[r] & \Br_{0}X \arrow[rr] \arrow[dd, "\cong"'] &  & \Br X \arrow[dd] \arrow[rr] &  & \Br X/\Br_{0}X \arrow[dd, "\cong"] \arrow[r] & 0 \\
			&                               &  &                              &  &                             &   \\
			0 \arrow[r] & \Br_{0}U \arrow[rr]                  &  & \Br_{1}U \arrow[rr]                 &  & \Br_{1}U/\Br_{0}U \arrow[r]                 & 0
		\end{tikzcd}
		\]
		By the snake lemma we deduce that $\Br X\xrightarrow[]{\sim} \Br_{1}U$.
		
		For the last statement we use that $\Br X_{K(\omega)}/\Br_0 X_{K(\omega)}$ is either $0$, $\ZZ/3\ZZ$ or $(\ZZ/3\ZZ)^2$ \cite[\S1, Prop. 1]{CTKS87}. Since both $\Br X/\Br_0 X$ and $\Br X_{K(\omega)}/\Br_0 X_{K(\omega)}$ are $3$-torsion, and $[K(\omega)\colon K]=2$ we apply \cite[Thm.~3.8.5]{CS21}, which gives that $\Res_{K(\omega)/K}$ is an isomorphism
		\[
		\Res_{K(\omega)/K}:\Br X/\Br_0 X \to \left(\Br X_{K(\omega)}/\Br_0 X_{K(\omega)}\right)^{\text{Gal}(K(\omega)/K)},
		\]
		whose inverse is given by $-\Cores_{K(\omega)/K}$. Hence a generator $\mathcal A' \in \Br X/\Br_0 X$ corresponds to a unique Galois invariant element $\mathcal A \in \Br X_{K(\omega)}/\Br_0 X_{K(\omega)}$.
	\end{proof}
	
	\begin{remark}
		The condition that $K$ does not contain a primitive third root of unity is necessary in Proposition \ref{prop: algebaric Brauer group for affine is the same as projective}. Consider the cubic surface \[
		X'\colon \quad x_{0}^3+x_{1}^3+x_{2}^3+ax_{3}^3=0\subset \mathbb{P}^{3}_{\mathbb{Q}(\omega)}.
		\] where $a$ is cube-free. In this case $\Br X'/\Br \mathbb{Q}(\omega)\cong (\mathbb{Z}/3\mathbb{Z})^2$ \cite[\S1, Prop. 1]{CTKS87}. By \cite[Lem. 2.2]{BL19} for any smooth and irreducible anticanonical curve $D'$ the surface $U':=X'\backslash D'$ has $\Br_{1}U'/\Br \mathbb{Q}(\omega)\cong (\mathbb{Z}/3\mathbb{Z})^3$.
		
	\end{remark}
	\subsection{Transcendental Brauer group} In Proposition \ref{prop: algebaric Brauer group for affine is the same as projective} we showed that $\Br _{1}U=\Br X$. To establish Theorem \ref{thm: Brauer group thm}
	\[
	\Br U =
	\begin{cases} 
		\Br X \oplus \mathbb{Z}/2\mathbb{Z} \ &\text{if}\ a_{1}a_{2}a_{3} \equiv 2\ \text{mod}\ \mathbb{Q}^{\ast 3},\\
		\Br X &\text{otherwise},
	\end{cases}
	\]
	we need to determine the transcendental Brauer group of $U$. For this we adapt some results by Jahnel and Schindler \cite[Thm.~4.9, Cor 4.10, Rem 4.11]{JS17} on degree four del Pezzo surfaces to geometrically rational surfaces. Their proofs can be used almost verbatim in this situation. From there we use a method of Colliot-Thélène and Wittenberg \cite[\S 3,5]{CTW12}.

	\begin{lemma}\label{lemma: Brauer group over algebaric closure}
		Let $F$ be a field of characteristic $0$ and $S$ a smooth, geometrically rational projective surface over $F$. Denote by $H$ a hyperplane over $F$ and $V$ the affine surface $V:=S\backslash H$. Suppose $C:=H \cap S$ is smooth, then the natural morphism \[
		\Br(\bar{V})\rightarrow \HH^{1}_{\et}(\bar{C},\mathbb{Q}/\mathbb{Z}).
		\] is an isomorphism.
	\end{lemma}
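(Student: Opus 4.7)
The plan is to extract the isomorphism from the Gysin (purity) exact sequence attached to the smooth pair $(\bar S, \bar C)$ with open complement $\bar V$. Absolute cohomological purity for the smooth codimension-one closed immersion $\bar C \hookrightarrow \bar S$ yields, in \'etale cohomology, a long exact sequence
\[
\Br \bar S \longrightarrow \Br \bar V \xrightarrow{\;\partial\;} \HH^{1}_{\et}(\bar C, \mathbb{Q}/\mathbb{Z}) \longrightarrow \HH^{3}(\bar S, \mathbb{G}_m),
\]
in which $\partial$ is the residue map along $\bar C$; functoriality of purity identifies this with the natural morphism of the statement. Hence the lemma reduces to the two vanishings $\Br \bar S = 0$ and $\HH^{3}(\bar S, \mathbb{G}_m) = 0$.

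For the first, $\bar S$ is smooth, projective and geometrically rational over an algebraically closed field of characteristic zero, hence birational to $\PP^2_{\bar F}$; as the Brauer group is a birational invariant among smooth projective varieties in this setting, $\Br \bar S = \Br \PP^2_{\bar F} = 0$. For the second, since $\HH^{i}(\bar S, \mathbb{G}_m)$ is torsion for $i \geq 2$ (Grothendieck), it suffices to show $\HH^{3}(\bar S, \mu_n) = 0$ for every $n \geq 1$. Poincar\'e duality for the smooth projective surface $\bar S$ with torsion coefficients over $\bar F$ gives $\HH^{3}(\bar S, \mu_n) \cong \HH^{1}(\bar S, \mathbb{Z}/n)^{\vee}$, and the right-hand side vanishes because any smooth rational surface over an algebraically closed field of characteristic zero is simply connected, so $\pi_{1}^{\et}(\bar S) = 1$ and $\HH^{1}(\bar S, \mathbb{Z}/n) = \Hom(\pi_{1}^{\et}(\bar S), \mathbb{Z}/n) = 0$. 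A short diagram chase through the Kummer sequence then promotes this vanishing from $\mu_n$-coefficients to $\mathbb{G}_m$.

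The main obstacle is the careful identification of the boundary map $\partial$ in the purity sequence with the natural residue morphism of the statement; with this in place, both vanishings are classical and the argument follows the template of \cite[\S4]{JS17} almost verbatim, the only modifications being that one replaces the specific geometry of quartic del Pezzo surfaces by the inputs used above (birational invariance of $\Br$, simply-connectedness, Poincar\'e duality), all of which apply equally to any smooth projective geometrically rational surface.
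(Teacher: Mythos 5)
Your proof is correct and takes essentially the same route as the paper: both apply Grothendieck's purity sequence for the smooth pair $(\bar S, \bar C)$ and reduce the claim to the vanishings $\Br\bar S = \HH^2_{\et}(\bar S,\mathbb G_m)=0$ and $\HH^3_{\et}(\bar S,\mathbb G_m)=0$. The only difference is one of detail: the paper asserts these vanishings directly from rationality of $\bar S$ and cites \cite[Thm.~3.7.1]{CS21} for purity, whereas you spell out why they hold (birational invariance of $\Br$, Poincar\'e duality, and simple connectedness of rational surfaces) — a fuller justification of the same facts.
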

	\begin{proof}
		As $\bar{S}$ is rational $\HH^{2}_{\et}(\bar{S},\mathbb{G}_{m})=\HH^{3}_{\et}(\bar{S},\mathbb{G}_{m})=0$. Then the statement follows from Grothendieck's purity theorem \cite[Thm.~3.7.1]{CS21}.
	\end{proof}
	
	\begin{proposition}\label{prop: isogenies and transcendental elements}
		Let $V$ and $C$ be as in Lemma \ref{lemma: Brauer group over algebaric closure}. Then there is an injection
		\[
		(\Br V/ \Br_{1} V)[n] \hookrightarrow \Hom(J(C)(\bar{F})[n],\mathbb{Q}/\mathbb{Z})^{{\Gal(\bar{F}/F)}}.
		\]
		In particular, if $C$ is a genus one curve then a class of order $n$ in $\Br V/\Br_{1} V$ induces an $F$-rational $n$-isogeny
		\[
		J(C) \twoheadrightarrow C'
		\]
		to an elliptic curve $C'$ with an $F$-point of order $n$.
	\end{proposition}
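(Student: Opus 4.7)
The plan is to combine the Hochschild--Serre spectral sequence with Lemma~\ref{lemma: Brauer group over algebaric closure}. By the very definition of $\Br_{1}V$, the induced map $\Br V/\Br_{1}V \hookrightarrow \Br \bar V$ is injective and factors through $\Br(\bar V)^{\Gal(\bar F/F)}$. Restricting to $n$-torsion and inserting the Galois-equivariant identification $\Br \bar V \cong \HH^{1}_{\et}(\bar C,\QQ/\ZZ)$ coming from Lemma~\ref{lemma: Brauer group over algebaric closure} produces
\[
(\Br V/\Br_{1}V)[n] \hookrightarrow \HH^{1}_{\et}(\bar C,\ZZ/n\ZZ)^{\Gal(\bar F/F)}.
\]

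Next I would identify $\HH^{1}_{\et}(\bar C,\ZZ/n\ZZ)$ with $\Hom(J(C)(\bar F)[n],\ZZ/n\ZZ)$ as Galois modules. Since $\bar C$ is smooth, projective and geometrically connected, its étale fundamental group has abelianisation isomorphic to the full Tate module $\widehat T J(C) := \varprojlim_{m} J(C)(\bar F)[m]$, so that
\[
\HH^{1}_{\et}(\bar C,\ZZ/n\ZZ) = \Hom_{\text{cts}}\bigl(\widehat T J(C),\ZZ/n\ZZ\bigr) = \Hom\bigl(J(C)(\bar F)[n],\ZZ/n\ZZ\bigr),
\]
using $\widehat T J(C)/n \cong J(C)(\bar F)[n]$. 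Composing with the inclusion $\ZZ/n\ZZ \hookrightarrow \QQ/\ZZ$ then yields the claimed injection.

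For the isogeny statement, assume $C$ has genus one and fix a class of exact order $n$ in $\Br V/\Br_{1}V$. By the injection just constructed it corresponds to a Galois-equivariant surjection $\phi \colon J(C)(\bar F)[n] \twoheadrightarrow \ZZ/n\ZZ$. Its kernel $K:=\ker \phi$ is a Galois-stable subgroup of $J(C)[n]$ of order $n$, and therefore defines an étale $F$-rational subgroup scheme of the elliptic curve $J(C)$. The quotient $C' := J(C)/K$ is then an elliptic curve over $F$ together with an $F$-rational $n$-isogeny $J(C) \twoheadrightarrow C'$. The image of $J(C)[n]$ in $C'$ equals $J(C)[n]/K$, which through $\phi$ is identified with $\ZZ/n\ZZ$ carrying trivial Galois action; hence $C'(F)$ contains a point of exact order $n$.

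The hard part will be the second step: one must verify that the identification of $\HH^{1}_{\et}(\bar C,\ZZ/n\ZZ)$ with the dual Tate module is genuinely Galois-equivariant, and that it is compatible with the identification supplied by Lemma~\ref{lemma: Brauer group over algebaric closure}. The remaining steps, namely the Hochschild--Serre injection and the construction of the quotient elliptic curve from a Galois-stable subgroup scheme, are essentially formal in characteristic zero.
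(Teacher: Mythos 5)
Your proof is correct, but it takes a different route from the paper's for the main injection. The paper uses the Kummer sequence to identify $\HH^{1}_{\et}(\bar{C},\mu_{n})\cong \Pic(\bar{C})[n]\cong J(C)(\bar{F})[n]$ and then invokes Poincar\'e duality,
\[
\HH^{1}_{\et}(\bar{C},\mu_{n})\times \HH^{1}_{\et}(\bar{C},\mathbb{Z}/n\mathbb{Z})\rightarrow \HH^{2}_{\et}(\bar{C},\mu_{n})\cong \mathbb{Z}/n\mathbb{Z},
\]
which it cites as a perfect Galois-invariant pairing, to conclude $\HH^{1}_{\et}(\bar{C},\mathbb{Z}/n\mathbb{Z})\cong \Hom(J(C)(\bar{F})[n],\mathbb{Q}/\mathbb{Z})$. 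You instead pass through $\pi_{1}^{\mathrm{ab}}(\bar{C})\cong \widehat{T}J(C)$ and take $\Hom_{\mathrm{cts}}(-,\mathbb{Z}/n\mathbb{Z})$. Both are standard and arrive at the same dual module. Your approach requires the identification of the abelianised \'etale fundamental group of $\bar{C}$ with the full Tate module of its Jacobian, and you flag Galois equivariance as the delicate point; this is in fact fine (the Albanese map is only defined up to translation once a base point is chosen, but translations act trivially on $\pi_{1}$, so the induced isomorphism on abelianised $\pi_{1}$ is canonical and hence Galois-equivariant). The paper's Poincar\'e-duality route sidesteps this base-point discussion by appealing directly to the Galois equivariance of the duality pairing, which is perhaps the tidier citation.

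One thing you do that the paper does not: you give an explicit argument for the ``in particular'' isogeny statement (Galois-stable kernel of order $n$, quotient elliptic curve, trivial Galois action on the image of $n$-torsion). The paper leaves this to the reader. Your argument for it is correct, including the observation that a class of exact order $n$ in $\Hom(J(C)[n],\mathbb{Q}/\mathbb{Z})$ is necessarily surjective onto $\tfrac{1}{n}\mathbb{Z}/\mathbb{Z}$.
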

	
	\begin{proof}
		From the Kummer sequence of étale sheaves we can deduce $\HH_{\et}^{1}(\bar{C},\mu_{n})\cong \Pic(\bar{C})[n]$ and $\HH^{2}_{\et}(\bar{C},\mu_{n})\cong \mathbb{Z}/n\mathbb{Z}$. As $C$ is a smooth curve \[
		\Pic(\bar{C})[n]\cong \Pic^0(\bar{C})[n]\cong J(C)(\bar{F})[n].
		\] Applying Poincaré duality \cite[\S VI Cor.~11.2]{MIL16}, which is a perfect pairing and Galois invariant
		\[
		\HH^{1}_{\et}(\bar{C},\mu_{n})\times \HH^{1}_{\et}(\bar{C},\mathbb{Z}/n\mathbb{Z})\rightarrow \HH^{2}_{\et}(\bar{C},\mu_{n})\cong \mathbb{Z}/n\mathbb{Z}\cong \tfrac{1}{n}\mathbb{Z}/\mathbb{Z}
		\] induces the following isomorphism
		\[\HH^{1}_{\et}(\bar{C},\mathbb{Z}/n\mathbb{Z}) \cong \Hom(\HH^{1}_{\et}(\bar{C},\mu_{n}),\tfrac{1}{n}\mathbb{Z}/\mathbb{Z})\cong \Hom(J(C)(\bar{F})[n],\mathbb{Q}/\mathbb{Z}).\]
		By definition there is a canonical injection $
		\Br V/\Br_{1} V \hookrightarrow \Br(\bar{V})^{\Gal(\bar{F}/F)}$, hence by Lemma \ref{lemma: Brauer group over algebaric closure} we have an inclusion
		\[
		\left(\Br V/\Br_{1} V\right)[n] \hookrightarrow \Hom(J(C)(\bar{F})[n],\mathbb{Q}/\mathbb{Z})^{{\Gal(\bar{F}/F)}}.
		\]
		In the case that $C$ has genus $1$ we can use the kernel of the corresponding Galois invariant homomorphism $\phi\colon J(C)[n](\bar F) \to \mathbb Q/\mathbb Z$, to construct the $F$-isogeny $J(C) \twoheadrightarrow J(C)/\ker \phi$. As we started from a point of order $n$, we see that $\phi$ has precisely order $n$. This implies that $\Image \phi=\frac1n\mathbb Z/\mathbb Z$ hence $\#\ker \phi = n$.
	\end{proof}
	We also need the following result on étale cohomology.
	
	\begin{lemma}\label{lemma: emedding cyclic extensions and torsors}
		Let $F$ be a field that does not contain a primitive third root of unity and $L:=F(\sqrt[3]{a})=F[x]/(x^3-a)$ for some $a\in F\setminus F^{\ast 3}$. Then for a trivial Galois module $M$ the restriction map
		\[
		\HH^{1}_{\et}(F,M)\rightarrow \HH^{1}_{\et}(L,M)
		\]
		is injective.
	\end{lemma}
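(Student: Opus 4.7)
The plan is to factor the restriction map through the Galois closure of $L/F$ and then invoke the inflation-restriction sequence.

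First, I would dispose of the trivial case $a \in F^{\times 3}$, in which $L = F$ and the map is the identity. Assume henceforth that $x^3 - a$ is irreducible over $F$, so that $[L:F] = 3$.

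Next, I would introduce $M := F(\omega, \sqrt[3]{a})$, the splitting field of $x^3 - a$ over $F$. Since $\omega \notin F$ by hypothesis, the quadratic extension $F(\omega)/F$ and the cubic extension $L/F$ have coprime degrees, so their intersection is $F$ and $[M:F] = 6$. The extension $M/F$ is Galois with group $S_3$, generated by a $3$-cycle permuting the cube roots of $a$ (while fixing $\omega$) and an involution sending $\omega$ to $\omega^2$ and fixing $\sqrt[3]{a}$.

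Finally, I would apply the inflation-restriction sequence attached to the quotient $\Gal(\bar F/F) \twoheadrightarrow \Gal(M/F)$ with coefficients in the trivial module $\ZZ/3\ZZ$, namely
\[
0 \longrightarrow \HH^1(\Gal(M/F), \ZZ/3\ZZ) \longrightarrow \HH^1_{\et}(F, \ZZ/3\ZZ) \longrightarrow \HH^1_{\et}(M, \ZZ/3\ZZ).
\]
The leftmost term equals $\Hom(S_3, \ZZ/3\ZZ)$, which vanishes because $S_3^{\mathrm{ab}} \cong \ZZ/2\ZZ$ has no element of order $3$. Hence restriction from $F$ to $M$ is injective. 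Since this restriction factors as $\HH^1_{\et}(F, \ZZ/3\ZZ) \to \HH^1_{\et}(L, \ZZ/3\ZZ) \to \HH^1_{\et}(M, \ZZ/3\ZZ)$, the first arrow is injective as well. No step in this plan should pose difficulty; the only mild point to verify is that $\Gal(M/F) \cong S_3$, which follows immediately from the coprimality of the degrees of $F(\omega)/F$ and $L/F$.
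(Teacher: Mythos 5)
Your proof is correct, and it takes a genuinely different route from the paper's. The paper argues directly at the level of $L$: it interprets a class in $\HH^1_{\et}(F,\ZZ/3\ZZ)$ as a pair $(E,s)$ with $E/F$ cyclic of degree dividing $3$ and $s$ a generator of $\Gal(E/F)$, observes that such a class dies under restriction to $L$ precisely when $E$ embeds into $L$ over $F$, and then notes that a non-Galois cubic extension $L/F$ contains no nontrivial cyclic cubic subextension. You instead pass to the Galois closure $M = F(\omega,\sqrt[3]{a})$, identify the kernel of $\Res_{M/F}$ with $\HH^1(S_3,\ZZ/3\ZZ) = \Hom(S_3,\ZZ/3\ZZ) = 0$ via inflation--restriction, and conclude from the factorisation $\Res_{M/F} = \Res_{M/L}\circ\Res_{L/F}$. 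The two arguments rest on the same group-theoretic fact ($S_3$ has no index-$3$ normal subgroup, equivalently no subgroup of order $2$ is normal), but yours packages it through a standard spectral-sequence argument, while the paper's stays at the level of explicit cyclic algebras. Your version is cleaner and avoids having to unwind the torsor interpretation of $\HH^1$; the paper's is self-contained and avoids invoking the abelianisation of $S_3$. Both implicitly assume $\mathrm{char}\,F\neq 3$ so that $L/F$ is separable, which is harmless for the applications over number fields.
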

	
	\begin{proof}
		Let us write $\Gamma_L \subset \Gamma_F$ for the absolute Galois groups of $L$ and $F$, respectively. An element of the domain is a homomorphism $\Gamma_F \to M$ which lies in the kernel of the restriction map if and only if it is zero on the non-normal subgroup $\Gamma_L$ of index $3$. As such $\Gamma_F \to M$ must be trivial on the smallest normal subgroup of $\Gamma_F$ which contains $\Gamma_L$, hence the kernel equals $\Gamma_F$.
	\end{proof}
	
	\subsection{Jacobian of diagonal cubic curves}
	As we saw in Proposition \ref{prop: isogenies and transcendental elements}, it is crucial to study the Jacobian of $D$. This section is dedicated to finding which cyclic $p$-isogenies $J(D)$ can have. Note that if $F$ is field of characteristic not equal to 2 or 3, then the Jacobian of $D/F$ is the curve 
	\[
	J(D)\colon \quad x_{1}^3+x_{2}^3+a_{1}a_{2}a_{3}x_{3}^3=0.
	\]
	By fixing the point $\mathcal{O}:=[1:-1:0]$ on $J(D)$, the 3-torsion $J(D)(\bar{F})[3]$ is defined by $x_{1}x_{2}x_{3}=0$ and we can rewrite the defining equation for $J(D)$ in Weierstrass form 
	\[
	J(D):y^2z=x^3-27(4a_{1}a_{2}a_{3})^2z^3.
	\]

	\begin{lemma}[{\cite[Table 5]{AG21}}]\label{lemma: isogeny classes of mordell curves}
		Let $D$ be the plane curve \[
		D:x_{1}^3+x_{2}^3+a x_{3}^3=0\subset \mathbb{P}^{2}_{\mathbb{Q}}
		\]  where $a$ is a cube-free integer. Then the $\mathbb{Q}$-isogeny classes of $D$ are shown in Table \ref{Tab:Isogeny classes of D}. 
	\end{lemma}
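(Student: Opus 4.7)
The plan is to reduce the classification of $\mathbb{Q}$-isogeny classes of $D$ to the classification of Mordell curves, i.e.\ elliptic curves of $j$-invariant $0$, which is precisely the content of \cite[Table 5]{AG21}.

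First, I would put $J(D)$ in Weierstrass form. The formulas recalled just before the statement already identify $J(D)$ with the Mordell curve $y^2 z = x^3 - 27(4 a_3)^2 z^3$ once one takes the $\mathbb{Q}$-rational origin $\mathcal O = [1:-1:0]$ on $x_1^3 + x_2^3 + a_3 x_3^3 = 0$; this elliptic curve has $j$-invariant $0$ and complex multiplication by $\mathbb{Z}[\omega]$.

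Second, I would invoke the classification of $\mathbb{Q}$-rational cyclic isogenies between elliptic curves with $j = 0$. Since $j(J(D)) = 0$, the endomorphism ring of $\overline{J(D)}$ is $\mathbb{Z}[\omega]$, and the only finite subgroup schemes defined over $\mathbb{Q}$ are generated by $\mathbb{Q}$-rational torsion or by kernels of multiplication by elements of $\mathbb{Z}[\omega]$ stable under Galois. In particular, a cyclic $\mathbb{Q}$-rational isogeny of prime degree $p$ from $J(D)$ either comes from a $\mathbb{Q}$-rational point of order $p$ or from an endomorphism $1 - \omega$ (giving the canonical $3$-isogeny available on every $j = 0$ curve) or its composition with multiplication; the possible cases are severely restricted by Mazur's theorem on isogenies.

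Third, I would match parameters with the normalisation used in \cite{AG21}. Two Mordell curves $y^2 = x^3 + k$ and $y^2 = x^3 + k'$ are $\mathbb{Q}$-isomorphic exactly when $k / k' \in \mathbb{Q}^{\ast 6}$, so the isogeny class depends only on the class of $-27(4a_3)^2$ modulo sixth powers. Writing the cube-free integer $a_3$ as $a_3 = b c^2$ with $b,c$ coprime and squarefree, one computes that this sixth-power class is determined by an explicit combinatorial invariant of $(b,c)$, which lines up directly with the cases tabulated in \cite[Table 5]{AG21}. Reading the table off through this correspondence then yields the asserted isogeny classes.

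The only real obstacle is the bookkeeping in the last step: one must be careful with the sixth-power rescaling, and in particular with the effect of the factors $2$ and $3$ appearing in the discriminant, to ensure that the parameter $a_3$ here is matched to the correct column of the table in \cite{AG21}. Once this dictionary is in place, the lemma is a direct citation.
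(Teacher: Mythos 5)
Your proposal takes essentially the same route as the paper: the lemma is stated as a direct quotation of \cite[Table 5]{AG21}, with no independent proof, and your steps (write $D$ in Weierstrass form $y^2 = x^3 - 27(4a_3)^2$, then look up its $\mathbb{Q}$-isogeny class) are exactly the reduction that makes that citation a routine table look-up. The middle paragraph on CM by $\mathbb{Z}[\omega]$ and Mazur's theorem re-derives the content of \cite{AG21} rather than adding anything needed for the citation, and is slightly imprecise ($\operatorname{End}_{\mathbb{Q}} J(D) = \mathbb{Z}$, with the CM endomorphisms only defined over $\mathbb{Q}(\omega)$, and it is the isomorphism class rather than the isogeny class that is governed by the sixth-power class of $-432a_3^2$); the genuinely non-trivial step, as you correctly flag, is the bookkeeping of that sixth-power normalisation to match the cube-free parameter $a_3$ against the cited table.
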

	
	\begin{center}
		\begin{tabular}{|p{1.4cm}|p{5cm}|p{3.0cm}|p{4.8cm}|}
			\hline
			\multicolumn{4}{|c|}{Isogeny Classes} \\
			\hline
			& Isogenous curves & Isogeny degree &Torsion of isogenous curve\\
			\hline
			$a=1$   & $D$   & 1 &   $\mathbb{Z}/3\mathbb{Z}$\\
			&  $y^2+y=x^3-270x-1708$  & 3   & Trivial\\ 
			&  $y^2+y=x^3$ & 3 &  $\mathbb{Z}/3\mathbb{Z}$\\
			&$y^2+y=x^3-30x+63$ & 9 &$\mathbb{Z}/3\mathbb{Z}$\\ \hline 
			$a =2$ &   $D$  & 1 & $\mathbb{Z}/2\mathbb{Z}$\\
			& $y^2=x^3-135x-594$  & 2   & $\mathbb{Z}/2\mathbb{Z}$\\
			& $y^2=x^3+1$  & 3& $\mathbb{Z}/6\mathbb{Z}$\\
			& $y^2=x^3-15x+22$ & 6 & $\mathbb{Z}/6\mathbb{Z}$\\
			\hline
			$a\neq 1,2$& $D$ & 1 & Trivial\\
			& $y^2=x^3+(4a)^2$ & 3 & $\mathbb{Z}/3\mathbb{Z}$\\ \hline
		\end{tabular}
		\captionof{table}{Isogeny classes of $D$\label{Tab:Isogeny classes of D}}
	\end{center}
	
	Only these $3$-isogenies will appear over a field extensions $K/\QQ$ when $K$ does not contain a primitive third root of unity.
	
	\begin{lemma}\label{lem: isogeny of Mordell curve over cubic extension}
		Let $K$ be a number field not containing a primitive third root of unity and $D$ the elliptic curve over $K$ defined by
		\[
		D\colon\quad x_{1}^{3}+x_{2}^3+ax_{3}^3=0\subseteq \mathbb{P}^2_{K}.
		\]
		\begin{enumerate}[label=\emph{(\arabic*)}]
			\item If $a\in K^{\ast 3}$ then $D$ has precisely two cyclic 3-isogenies over $K$,
			\item Otherwise, $D$ has precisely one cyclic 3-isogeny over $K$.
		\end{enumerate}
	\end{lemma}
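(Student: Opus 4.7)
I would count $K$-rational subgroups of $D[3](\bar K)$ of order $3$, since these correspond bijectively to cyclic $3$-isogenies of $D$ defined over $K$ (up to isomorphism of target). As $D[3] \cong (\ZZ/3\ZZ)^2$ has exactly four subgroups of order $3$, the lemma amounts to analysing the $\Gal(\bar K/K)$-action on this four-element set.

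Taking $\mathcal O = [1:-1:0]$ as the identity (a flex of $D$), the three-torsion coincides with the set of inflection points of $D$, which is cut out by $x_1 x_2 x_3 = 0$. The three collinear points on $\{x_3 = 0\} \cap D$ sum to $\mathcal O$ and hence form the subgroup $H_0 = \{\mathcal O, [1:-\omega:0], [1:-\omega^2:0]\}$. A short line--cubic computation with the line through $\mathcal O$ and $[0:-\zeta\sqrt[3]{a_3}:1]$ then identifies the remaining three subgroups as
\[
	H_\zeta = \{\mathcal O,\, [0:-\zeta\sqrt[3]{a_3}:1],\, [-\zeta\sqrt[3]{a_3}:0:1]\}, \qquad \zeta \in \{1,\omega,\omega^2\}.
\]

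The crucial observation is that $H_0$ is automatically $K$-rational: its non-identity elements lie in $D(K(\omega))$ and are merely permuted by $\omega \mapsto \omega^2$. On the other hand, any element of $\Gal(\bar K/K)$ sending $\sqrt[3]{a_3} \mapsto \omega\sqrt[3]{a_3}$ cyclically permutes $H_1, H_\omega, H_{\omega^2}$. Hence, if $a_3 \in K^{\ast 3}$ then $\sqrt[3]{a_3} \in K$, this cyclic permutation is trivial, and $\omega \mapsto \omega^2$ fixes $H_1$ while swapping $H_\omega \leftrightarrow H_{\omega^2}$, so that $H_0$ and $H_1$ are precisely the two $K$-rational subgroups. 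If $a_3 \notin K^{\ast 3}$, then $K(\sqrt[3]{a_3})/K$ is a nontrivial cubic extension and $\{H_1, H_\omega, H_{\omega^2}\}$ forms a single Galois orbit, leaving $H_0$ as the only $K$-rational subgroup.

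The only substantive bookkeeping is exhibiting the four order-$3$ subgroups explicitly, in particular verifying that $[0:-\zeta\sqrt[3]{a_3}:1]$ and $[-\zeta\sqrt[3]{a_3}:0:1]$ are collinear with $\mathcal O$ and hence inverse to one another in the group law; once this is done, the orbit count is immediate.
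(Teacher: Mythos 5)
Your approach is essentially the same as the paper's: both proofs count Galois-stable order-$3$ subgroups of $D[3]$ and identify one (the one whose generators involve only $\omega$) that is automatically $K$-rational because $\omega \mapsto \omega^2$ just swaps its two non-identity points, then analyse the remaining three via the action on $\sqrt[3]{a_3}$. The only cosmetic difference is that the paper works in the Weierstrass model $y^2 = x^3 - 27(4a_3)^2$ while you stay in the plane cubic model, which makes the four flex subgroups somewhat more transparent; that is a harmless change.

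One small imprecision worth correcting: the claim that \emph{any} $\sigma \in \Gal(\bar K/K)$ with $\sigma(\sqrt[3]{a_3}) = \omega\sqrt[3]{a_3}$ cyclically permutes $H_1, H_\omega, H_{\omega^2}$ is not literally true. If $\sigma$ also sends $\omega \mapsto \omega^2$, then $\sigma(H_\zeta) = H_{\sigma(\zeta)\omega}$ gives $\sigma(H_1) = H_\omega$, $\sigma(H_\omega) = H_1$, $\sigma(H_{\omega^2}) = H_{\omega^2}$ --- a transposition, not a $3$-cycle. What you actually need is that \emph{some} such element cyclically permutes them, e.g.\ the one fixing $\omega$; this exists because $a_3 \notin K^{*3}$ makes $x^3-a_3$ irreducible over $K$, and since $[K(\omega):K]=2$ is coprime to $3$ we get $\Gal(K(\omega,\sqrt[3]{a_3})/K) \cong S_3$, which contains a $3$-cycle fixing $\omega$. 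Alternatively, one can argue directly, as the paper essentially does: if some $H_\zeta$ were Galois-stable, then $\sigma([0:-\zeta\sqrt[3]{a_3}:1])$ would again have zero first coordinate, forcing $\sigma(\zeta\sqrt[3]{a_3}) = \zeta\sqrt[3]{a_3}$ for all $\sigma$ and hence $a_3 \in K^{*3}$, a contradiction; so no $H_\zeta$ is fixed, and the orbit must have size $3$.
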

	
	\begin{proof}
		Writing $D$ in Weierstrass form $D:y^2=x^3-27(4a)^2$ we can consider the action of $\Gal(\bar{K}/K)$ on $D[3]$. Denote by $x_{i}$ the roots of $x^3-27(4^3a^2)$ for $i\in\{1,2,3\}$. The order $3$ subgroups of $D[3]$ written in $(x,y)$ coordinates are \begin{align*}
			S_{0}&:=\{\mathcal{O},(0,12a\sqrt{-3}),(0,-12a\sqrt{-3})\},&& S_{i}:=\{\mathcal{O},(x_{i},36a),(x_{i},-36a)\}.
		\end{align*}
		The subgroup $S_{0}$ is Galois invariant so $D$ has at least one cyclic 3-isogeny over $K$. If $a\not\in K^{\ast 3}$ then $S_{1},S_{2}$ and $S_{3}$ are permuted by $\Gal(\bar{K}/K)$, hence $D\rightarrow D/S_{0}$ is the unique (up to isomorphism) cyclic 3-isogeny of $D$. If $a\in K^{\ast 3}$ then there exists $i \in \{1,2,3\}$ such that $S_{i}$ is Galois invariant. Without loss of generality we can assume $i=1$, then $S_{2}$ and $S_{3}$ will be permuted by $\Gal(\bar{K}/K)$ as $K$ does not contain a primitive third root of unity, hence the statement.
	\end{proof}
	
	\begin{proposition}\label{prop:no points of order 9}
		Both $3$-isogenous curves of $x_1^3+x_2^3+x_3^3 =0$ do not contain a $K$-point of order $9$ for any pure multicubic fields $K = \mathbb Q(\sqrt[3] b_1, \sqrt[3] b_2,\ldots)$.
	\end{proposition}
	
	Note that this result proves that over $\QQ$ no curve $3$-isogeneous to the general curve $a_1x_1^3+a_2x_2^3+a_3x_3^3 =0$ has a $9$-torsion point.
	
	\begin{proof}
		One has to check that the field of definition of any $9$-torsion point of the curve $x_1^3+x_2^3+x_3^3 =0$ does not embed into a pure multicubic field. One way is to start from the $9$th division polynomial, as the roots of the polynomial are the $x$-coordinates of the $9$-torsion points. We will only need to consider the factors of this polynomial whose degree is a power of $3$, as the field it generates should embed in $K$. For the curve $y^2+y=x^3$ we find $2$ relevant factors (out of the $7$ in total), but one can check that both fields generated by such a factor contain a cyclic cubic extension, which is not the case for $K$. For the curve $y^2+y=x^3-270x-1708$ we find $3$ factors whose degree is a power of $3$ (out of the $5$ factors in total), but in only two cases the factor generates a field with a cyclic cubic subfield. The remaining factor generated the field $\mathbb Q(\sqrt[3]3)$, and the curve does have $3$-torsion, but no $9$-torsion defined over this field.
	\end{proof}

	\subsection{Proof of Theorem \ref{thm: Brauer group thm}}
	
	The main ingredient of the proof of Theorem \ref{thm: Brauer group thm} will be the purity sequence
	\[
	0 \rightarrow \Br X \rightarrow \Br U \xrightarrow[]{\partial_{D}} \HH_{\et}^{1}(D,\mathbb{Q}/\mathbb{Z})
	\]
	and an understanding of the curves isogenous to $D=\{a_{1}x_1^3+a_2x_2^3+a_3x^3=0\}\subset \PP^2_{\QQ}$.
	
	\begin{proposition}\label{prop:no 3-torsion}
		The quotient $\Br U/\Br X$ has no $3$-torsion.
	\end{proposition}
	
	\begin{proof}
		By Proposition~\ref{prop: algebaric Brauer group for affine is the same as projective} the inclusion $U \subset X$ over $\mathbb Q$ induces an isomorphism $\Br X \to \Br_1 U$, and the same hold after base change to any number field $K$. Hence
		\[
		\Br U/\Br X \cong \Br U/\Br_1 U \hookrightarrow  \Br U_K/\Br_1 U \cong \Br U_K/\Br X_K
		\]
		and it suffices to prove the claim after a base change. We will work over a pure multicubic extension $K/\QQ$ so that we can assume that $X$ is given by the equation $x_1^3 + x_2^3 + x_3^3 = a_0x_0^3$.
		
		We will use $P_1 = [1\colon -1 \colon 0]\in D(K)$ and $P_2 = [1\colon 0 \colon -1]\in D(K)$. We follow \cite[Prop.~3.1]{CTW12} to show that the image of $\partial_D$ is contained in the kernel of
		\[
		\alpha=(\alpha_1,\alpha_2) \colon \HH_{\et}^{1}(D,\mathbb{Q}/\mathbb{Z}) \to \HH_{\et}^{1}(P_1,\mathbb{Q}/\mathbb{Z}) \oplus \HH_{\et}^{1}(P_2,\mathbb{Q}/\mathbb{Z}):
		\]
		there is a line $L_i \subset X$ which intersects $D$ in $P_i$ transversally so we have
		\[ \begin{tikzcd}
			0 \arrow{r} & \Br X \arrow{r} \arrow[swap]{d} & \Br U \arrow{d} \arrow{r}{\partial_{D}} & \HH^{1}_{\et}(D,\mathbb{Q}/\mathbb{Z}) \arrow{d}{\alpha_i} \\
			0 \arrow{r} &\Br L_i \arrow{r}& \Br (L_i\backslash P_i) \arrow{r}{\partial_{P_i}} & \HH^{1}_{\et}(P_i,\mathbb{Q}/\mathbb{Z})
		\end{tikzcd}
		\]
		As $\sA \in \Br U$ is constant on $L_i\setminus P_i \cong \AA^1_K$ as $\Br \AA^1_K=\Br K$, the class $\sA$ has trivial residue at $P_i$.
		
		Now suppose we have a degree $3$ cover of $\psi \colon E \to D$ classified by an element of the kernel of $\alpha$, then the fibres over both $P_i$ consists of three $K$-points. Choosing a $K$-point $Q_0 \in \psi^{-1} P_0$, makes $(E,Q_0) \to (D,P_0)$ into an isogeny of elliptic curves of degree $3$. For this choice $P_1$ has order $3$ on $D$, and all points in $\psi^{-1} P_1$ have order $9$ on $E$. By Proposition~\ref{prop:no points of order 9} there are no such isogenies of $D$ of degree $3$ over pure multicubic fields $K/\QQ$. So $(\Image \partial_D)[3] \subseteq (\ker \alpha)[3] =0$ and we conclude that $(\Br U/\Br X)[3]=0$.
	\end{proof}
	
	Having dealt with the 3-torsion in the transcendental Brauer group we now deal with the 2-torsion.
	
	\begin{proposition}\label{prop:2-torsion}
		If $a_1a_2a_3 \equiv 2 \mod \mathbb Q^{*3}$ then $(\Br U/\Br X)[2] = \ZZ/2\ZZ$.
	\end{proposition}
	
	\begin{proof}
		Consider the purity sequence
		\[
		0 \rightarrow \Br X \rightarrow \Br U \xrightarrow[]{\partial_{D}} \HH_{\et}^{1}(D,\mathbb{Q}/\mathbb{Z}) \xrightarrow{\theta} \HH_{\et}^{1}(X,\mathbb{G}_m).
		\]
		Let $L\subset \mathbb{P}^{2}_{F}$ be a line not tangent to $D$, which can be chosen by Bertini's Theorem \cite[Thm.~6.3(4)]{J83}. By Bezout's Theorem $D$ and $L$ intersect at 3 distinct points geometrically. Consider for $i=1,2$ the maps
		\[
		\sigma_{i}: \HH^{i}_{\et}(D,\QQ/\ZZ)\rightarrow \HH^{i}_{\et}(\QQ,\QQ/\ZZ), \quad m\mapsto \sum\limits_{P\in L\cap D}\Cores_{\QQ(P)/\QQ}m(P).
		\]
		It is shown \cite[Lem.~5.4]{CTW12} that $m \in \HH^{i}_{\et}(D,\QQ/\ZZ)$ of an order coprime to $3$ lies in $\ker \theta$ if and only if it lies in $\ker \sigma_1$. Hence upon passing to the $2$-torsion
		\[
		(\Br X/\Br U)[2] \xrightarrow[\partial_D]{\cong} (\ker \theta)[2] = (\ker \sigma_1)[2] =\ker\left(\HH^{i}_{\et}(D,\tfrac12\ZZ/\ZZ) \to \HH^{i}_{\et}(\QQ,\tfrac12\ZZ/\ZZ) \right)
		\]
		as the $2$-torsion of cohomology with coefficient $\QQ/\ZZ$ equals the cohomology with coefficients $\tfrac12 \ZZ/\ZZ$.
		
		Consider the 5-term exact sequence coming from the Hochschild--Serre spectral sequence 
		\[
		\begin{tikzcd}
			0 \arrow[r] & \HH^1_{\et}(\QQ,\tfrac12 \ZZ/\ZZ) \arrow[r,"\xi_{1}"] & \HH^1_{\et}(D,\tfrac12 \ZZ/\ZZ) \ar[draw=none]{d}[name=X, anchor=center]{}\arrow[r]
			& \HH^{1}_{\et}(\bar{D},\tfrac12 \ZZ/\ZZ)^{\Gal(\bar{\QQ}/\QQ)} \ar[rounded corners,
			to path={ -- ([xshift=2ex]\tikztostart.east)
				|- (X.center) \tikztonodes
				-| ([xshift=-2ex]\tikztotarget.west)
				-- (\tikztotarget)}]{dll}[at end]{}       &               &    \\
			& \HH^{2}_{\et}(\QQ,\tfrac12 \ZZ/\ZZ) \arrow[r,"\xi_{2}"] & \HH^{2}_{\et}(D,\tfrac12 \ZZ/\ZZ).  {}
		\end{tikzcd}
		\]
		We will show that $\sigma_i$ is a one-sided inverse for $\xi_i$. The composition $\xi_i\circ \sigma_i$
		\[
		\HH^{i}_{\et}(\QQ,\tfrac12 \ZZ/\ZZ)\xrightarrow[]{\xi_{i}} \HH^{i}_{\et}(D,\tfrac12 \ZZ/\ZZ)\xrightarrow[]{\sigma_{i}} \HH^{i}_{\et}(\QQ,\tfrac12 \ZZ/\ZZ)
		\]
		equals multiplication by 3 on an elementary $2$-group, hence it is the identity. So
		\begin{equation*}
			\ker\left(\HH^{1}_{\et}(D,\tfrac12 \ZZ/\ZZ)\xrightarrow[]{\sigma_{1}} \HH^{1}_{\et}(\mathbb{Q},\tfrac12 \ZZ/\ZZ)\right) =\HH^{1}_{\et}(\bar{D},\tfrac12 \ZZ/\ZZ)^{\Gal(\bar{\mathbb{Q}}/\mathbb{Q})}.
		\end{equation*}
		As in the proof of Proposition~\ref{prop: isogenies and transcendental elements} the last group classifies isogenies over $J(D)$ of degree at most $2$. From Table~\ref{Tab:Isogeny classes of D} we see these form the group $\ZZ/2\ZZ$.
	\end{proof}
	
	We are now set to prove Theorem \ref{thm: Brauer group thm}.
	
	\begin{proof}[Proof of Theorem \ref{thm: Brauer group thm}]
		From Proposition \ref{prop: isogenies and transcendental elements} and Lemma \ref{lemma: isogeny classes of mordell curves}, we see that the only possible torsion that appears in $\Br U/\Br X$ is $1,2,3$ and $6$, and the even torsion only appears if $a_1a_2a_3 \equiv 2 \mod \QQ^{*3}$. Now proposition~\ref{prop:no 3-torsion} states there is no $3$-torsion. We conclude the proof by using Proposition~\ref{prop:2-torsion} $\Br U/\Br X = \ZZ/2\ZZ$ in the case $a_1a_2a_3 \equiv 2 \mod \QQ^{*3}$.
	\end{proof}
	
	\subsection{Explicit generators}\label{sec:2-coeff} For future computations it will useful to write down explicit elements for the Brauer group of diagonal affine cubic surfaces. As we have seen in the generic case this Brauer group is isomorphic to the Brauer group of its compactification which was studied by Colliot-Thélène, Sansuc and Kanevsky \cite{CTKS87}. Unfortunately, there is no uniform generator for the family of all diagonal cubic surfaces \cite[Thm.~1.2]{U14}. However, in specific subfamilies one is able to write down generators. In particular, this is possible if $a_1 = a_2 = 1$, which we will assume throughout this subsection. Colliot-Thélène and Wittenberg have made an extensive study in \cite{CTW12} of the case where $a_3 = 1$ or $a_3 = 2$. Note that the compactification $X$ of $U$ always has a non-empty set of rational points, namely $[0:1:-1:0] \in X(\QQ)$.
	
	\begin{lemma}[{\cite[\S1, Prop.~1]{CTKS87}}]
		\label{lem:BrX}
		If $a_1 = a_2 = 1$ and $a_0, a_3$ are cube-free, then
		\[ 
		\Br X/\Br \mathbb{Q} =
		\begin{cases} 
			0 \ &\text{if}\ a_{0}a_{3} \ \text {or}\ a_{0}/a_{3}\ \text{are in}\ \mathbb{Q}^{\ast 3},\\
			\mathbb{Z}/3\mathbb{Z}\ &\text{otherwise}.
		\end{cases}
		\] 
		In particular, if $\Br X/\Br \mathbb{Q}\cong \mathbb{Z}/3\mathbb{Z}$, then $\Br X/\Br \mathbb{Q}$ is generated by the cyclic algebra 
		\[
		\sB' = \Cores_{\mathbb{Q}(\omega)/\mathbb{Q}} \sB \in \Br \mathbb{Q}(X), \text{ where} \quad 
		\sB = \left(\frac{a_{0}}{a_{3}},\frac{x_{1}+\omega x_{2}}{x_{1}+x_{2}}\right)_{\omega}.
		\]
	\end{lemma}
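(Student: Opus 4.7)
The first equality is exactly \cite[\S 1, Prop.~1]{CTKS87}, so only the explicit description of the generator in the non-trivial case requires verification. By Proposition~\ref{prop: algebaric Brauer group for affine is the same as projective}(3), the restriction map is an isomorphism
\[
	\Br X/\Br \mathbb{Q} \xrightarrow{\sim} \bigl(\Br X_{\mathbb{Q}(\omega)}/\Br \mathbb{Q}(\omega)\bigr)^{\Gal(\mathbb{Q}(\omega)/\mathbb{Q})}
\]
with inverse $-\Cores_{\mathbb{Q}(\omega)/\mathbb{Q}}$, so the task reduces to exhibiting a Galois-invariant generator $\sA$ of $\Br X_{\mathbb{Q}(\omega)}/\Br \mathbb{Q}(\omega) \cong \mathbb{Z}/3\mathbb{Z}$; the class $\sA' := \Cores_{\mathbb{Q}(\omega)/\mathbb{Q}} \sA$ will then generate $\Br X/\Br \mathbb{Q}$ (the overall sign is irrelevant for generation). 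The natural candidate is $\sA = \bigl(a_0/a_3,\; (x_1+\omega x_2)/(x_1+x_2)\bigr)_\omega$.

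The key geometric input is that over $\mathbb{Q}(\omega)$ the defining equation of $X$ factors as
\[
	(x_1+x_2)(x_1+\omega x_2)(x_1+\omega^2 x_2) = a_0 x_0^3 - a_3 x_3^3.
\]
Consequently the divisors of $x_1+x_2$ and $x_1+\omega x_2$ on $X_{\mathbb{Q}(\omega)}$ are each supported on the curve $a_0 x_0^3 = a_3 x_3^3$, on whose residue field $a_0/a_3 = (x_3/x_0)^3$ is a cube. All residues of $\sA$ therefore vanish, so Grothendieck's purity theorem \cite[Thm.~3.7.1]{CS21} guarantees that $\sA \in \Br X_{\mathbb{Q}(\omega)}$. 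Non-triviality of its image in the cyclic group $\Br X_{\mathbb{Q}(\omega)}/\Br \mathbb{Q}(\omega)$ then follows from a comparison with the explicit generator produced in \cite[\S 1, Prop.~1]{CTKS87}, which has the same symbolic shape and therefore differs from $\sA$ only by a constant cyclic symbol.

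The main obstacle is verifying Galois invariance. The non-trivial element $\sigma \in \Gal(\mathbb{Q}(\omega)/\mathbb{Q})$ sends $\omega \mapsto \omega^2$, and using the identity $(a,b)_{\omega^2} = -(a,b)_\omega$ in the $3$-torsion Brauer group together with the bilinearity of the cyclic symbol and the factorisation above, a direct computation gives
\[
	\sigma(\sA) - \sA \;\equiv\; -\bigl(a_0/a_3,\; a_0 x_0^3 - a_3 x_3^3\bigr)_\omega \pmod{\Br \mathbb{Q}(\omega)}.
\]
Setting $u := x_3/x_0$ and $\alpha := \sqrt[3]{a_0/a_3}$, modulo cubes and constants in the second slot of the symbol the right-hand side reduces to $\bigl(a_0/a_3,\; 1 - u^3/\alpha^3\bigr)_\omega$. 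Since
\[
	1 - u^3/\alpha^3 \;=\; N_{\mathbb{Q}(\omega,\alpha)/\mathbb{Q}(\omega)}\!\bigl(1 - u/\alpha\bigr),
\]
the standard norm-vanishing property of cyclic algebras forces this class to be zero. Hence $\sigma(\sA) = \sA$ in $\Br X_{\mathbb{Q}(\omega)}/\Br \mathbb{Q}(\omega)$, and $\sA' = \Cores_{\mathbb{Q}(\omega)/\mathbb{Q}} \sA$ is the claimed generator of $\Br X/\Br \mathbb{Q}$.
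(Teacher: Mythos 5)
The paper states this lemma as a direct citation to \cite[\S1, Prop.~1]{CTKS87} and supplies no proof of its own, so there is nothing in the paper to compare your argument against; the relevant check is whether your reconstruction is correct.

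It is. You correctly reduce to exhibiting a Galois-invariant generator of $\Br X_{\mathbb{Q}(\omega)}/\Br\mathbb{Q}(\omega)$ via Proposition~\ref{prop: algebaric Brauer group for affine is the same as projective}(3). The residue computation is sound: the divisor of $(x_1+\omega x_2)/(x_1+x_2)$ on $X_{\mathbb{Q}(\omega)}$ is supported on the two hyperplane sections $\{x_1+x_2=0\}\cap X$ and $\{x_1+\omega x_2=0\}\cap X$, and substituting $x_1=-x_2$ or $x_1=-\omega x_2$ into $x_1^3+x_2^3+a_3x_3^3=a_0x_0^3$ kills the $x_1^3+x_2^3$ part (since $(-\zeta)^3=-1$ for any cube root of unity $\zeta$), leaving $a_0x_0^3=a_3x_3^3$; hence $a_0/a_3=(x_3/x_0)^3$ is a cube in the residue field of every component, so all residues of $\sA$ vanish and purity gives $\sA\in\Br X_{\mathbb{Q}(\omega)}$. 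Your Galois-invariance calculation also checks out: using $(a,b)_{\omega^2}=-(a,b)_\omega$, bilinearity, and the factorisation $(x_1+x_2)(x_1+\omega x_2)(x_1+\omega^2x_2)=a_0x_0^3-a_3x_3^3$, the difference $\sigma(\sA)-\sA$ reduces modulo constants and cubes to a symbol $(a_0/a_3,\,1-u^3/\alpha^3)_\omega$ with $u=x_3/x_0$, $\alpha^3=a_0/a_3$, and $1-u^3/\alpha^3$ is a norm from $\mathbb{Q}(\omega)(X)(\alpha)$, so the symbol is trivial. The one step left at the level of assertion rather than verification is the non-triviality of $\sA$ in $\Br X_{\mathbb{Q}(\omega)}/\Br\mathbb{Q}(\omega)$, which you defer to CTKS87; this is acceptable given that the lemma is itself a citation, but it is the one place where the reader who wants a fully self-contained argument would need to open that reference or compute an invariant map at a suitable prime to confirm non-constancy.

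One minor point of care: you speak of the divisors of $x_1+x_2$ and $x_1+\omega x_2$ being "each supported on the curve $a_0x_0^3=a_3x_3^3$", but these are two distinct (reducible) curves, one in each hyperplane. Since the relation $a_0/a_3=(x_3/x_0)^3$ holds on every component of both, the conclusion you draw is unaffected; still, it is worth phrasing so as not to suggest a single curve.
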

	\begin{lemma}
		\label{lem:BrU}
		Assume that $a_1 = a_2 = 1$ and $a_0, a_3$ are cube-free. Then
		\begin{equation*} 
			\Br U=
			\begin{cases} 
				\Br X \oplus \mathbb{Z}/2\mathbb{Z} \ &\text{if}\ a_{3} \equiv 2\ \text{mod}\ \mathbb{Q}^{\ast 3},\\
				\Br X &\text{otherwise}.
			\end{cases}
		\end{equation*} 
		In particular, if $a_{3} \equiv 2\ \text{mod}\ \mathbb{Q}^{\ast 3}$, then the $\mathbb{Z}/2\mathbb{Z}$ factor is generated by the transcendental element
		\[
		\left(a_{0}(x_{1}+x_{2}+2x_{3}),-3(x_{1}+x_{2}+2x_{3})(x_{1}+x_{2})\right)\in \Br \mathbb{Q}(U).
		\]
	\end{lemma}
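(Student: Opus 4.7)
The first statement is immediate from Theorem~\ref{thm: Brauer group thm}: when $a_1=a_2=1$, the condition $a_1a_2a_3\equiv 2 \bmod \QQ^{*3}$ reduces to $a_3\equiv 2\bmod \QQ^{*3}$. Write $\alpha:=(f,g)$ with $f=a_0(x_1+x_2+2x_3)$ and $g=-3(x_1+x_2+2x_3)(x_1+x_2)$. Since $\alpha$ is a quaternion algebra it has order dividing $2$, and $(\Br U/\Br X)[2]\cong \ZZ/2\ZZ$ by the first part, so it suffices to verify that $\alpha\in\Br U$ and $\alpha\notin\Br X$. A cube-scaling in $x_3$ reduces to $a_3=2$.

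To show $\alpha\in\Br U$, the plan is to compute the tame residues at the only prime divisors of $U$ where $f$ or $g$ has non-zero valuation, namely $E_1=V(x_1+x_2+2x_3)\cap U$ and $E_2=V(x_1+x_2)\cap U$. Substituting $x_2=-x_1-2x_3$ into the equation of $U$ produces the identity $a_0=-6x_3(x_1+x_3)^2$ on $E_1$; combined with $x_1+x_2=-2x_3$ this simplifies $\partial_{E_1}(\alpha)=-(f/g)|_{E_1}$ to the square class of $(x_1+x_3)^2$. At $E_2$ the equation of $U$ forces $a_0=2x_3^3$, so $\partial_{E_2}(\alpha)=f|_{E_2}=2a_0x_3=(2x_3^2)^2$, again a square.

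To show $\alpha\notin\Br X$, compute the residue along $D=\{x_0=0\}$. Since $v_D(f)=-1$ and $v_D(g)=-2$, the residue class in $\kappa(D)^*/\kappa(D)^{*2}$ equals $\bigl[-3(x_1+x_2)/(x_1+x_2+2x_3)\bigr]$, using that $a_0^2$ is a square. B\'ezout on the plane cubic $D$ yields $\divv(x_1+x_2)|_D=3Q_1$ with $Q_1=[1:-1:0]$, while the substitution above shows $\divv(x_1+x_2+2x_3)|_D=Q_1+2P_0$ with $P_0=[1:1:-1]$. Thus $h:=(x_1+x_2)/(x_1+x_2+2x_3)$ has divisor $2(Q_1-P_0)$ on $D$. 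Taking $Q_1$ as the origin of the elliptic curve $D$, the line $\{x_1+x_2+2x_3=0\}$ passes through $Q_1$ and is tangent to $D$ at $P_0$, so $P_0$ is the non-trivial $\QQ$-rational $2$-torsion point guaranteed by Lemma~\ref{lemma: isogeny classes of mordell curves}. Hence $Q_1-P_0$ is a non-trivial class in $\Pic^0(\bar D)[2]$, so $h$ is not a square in $\kappa(\bar D)^*$, and the residue projects non-trivially onto the transcendental summand $\HH^1(\bar D,\ZZ/2\ZZ)^{\Gal(\bar\QQ/\QQ)}$ from Lemma~\ref{lemma: splitting of HS SS for two torsion}.

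The main obstacle is the residue computation at $D$, specifically verifying that the square class of $h$ lies in the transcendental summand rather than the algebraic one; this in turn reduces to identifying $P_0$ as a non-trivial rational $2$-torsion point of $J(D)$, which is exactly the content of the hypothesis $a_3\equiv 2\bmod \QQ^{*3}$.
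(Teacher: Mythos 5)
Your proof is correct, and it takes a genuinely different route from the paper's one-line proof, which simply cites \cite[Prop.~3.4]{CTW12} for the explicit generator. You instead verify the claim directly by a purity argument: the tame residues of the quaternion algebra $\alpha=(f,g)$ along $E_1=V(x_1+x_2+2x_3)\cap U$ and $E_2=V(x_1+x_2)\cap U$ are the squares $(x_1+x_3)^2$ and $(2x_3^2)^2$, so $\alpha\in\Br U$ by Grothendieck purity, while the residue $[-3(x_1+x_2)/(x_1+x_2+2x_3)]$ along $D$ is non-trivial because the function $h=(x_1+x_2)/(x_1+x_2+2x_3)$ has divisor $2(Q_1-P_0)$ with $P_0-Q_1$ a non-zero $2$-torsion class, so $h$ is not a square even in $\kappa(\bar D)$. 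Combined with the order bound for quaternion algebras and $(\Br U/\Br X)[2]\cong\ZZ/2\ZZ$, this pins down the generator. The residue computations are correct (I checked the identity $a_0=-6x_3(x_1+x_3)^2$ on $E_1$, the forced relation $a_0=2x_3^3$ on $E_2$, the valuations $v_D(f)=-1$, $v_D(g)=-2$ after homogenising by $x_0$, and the intersection divisors $3Q_1$ and $Q_1+2P_0$). This makes the lemma self-contained, which the paper does not do; the paper instead outsources the verification, and very likely \cite{CTW12} carries out a similar residue analysis, so your argument is best thought of as supplying the computation that the paper leaves implicit. One small caveat: your ``cube-scaling in $x_3$'' changes coordinates and hence changes the written form of the generator, but since $a_3$ is cube-free and $a_3\equiv 2\bmod\QQ^{*3}$ forces $a_3=\pm 2$, this only hides a sign; you may as well note $a_3\in\{2,-2\}$ directly and, for $a_3=-2$, replace $x_3$ by $-x_3$ throughout, as the paper also tacitly restricts to $a_3=2$.
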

	\begin{proof}
		The main statement is proved in Theorem \ref{thm: Brauer group thm} and the explicit generator for the $\mathbb{Z}/2\mathbb{Z}$ factor was determined in \cite[Prop.~3.4]{CTW12}.
	\end{proof}
	\begin{remark} Consider the case
		$a_{1}=1$ and
		$a_{2}a_{3}\equiv 2$ mod $\mathbb{Q}^{* 3}$. Over the field extension
		$K:=\mathbb{Q}[x]/(x^3-a_{2})$ the surface $U$ becomes isomorphic to the surface
		\[
		U':\quad u_{1}^3+u_{2}^3+2u_{3}^3=a_{0}.
		\]
		In the absence of primitive third roots of unity in the base field, one can show that the curve $u_{1}^3+u_{2}^3+2u_{3}^3=0$ has a unique degree $2$ isogeny, and as in the proof of Proposition~\ref{prop:2-torsion} we can deduce $(\Br U'/\Br_{1}U')\cong \mathbb{Z}/2\mathbb{Z}$. This can be used to show that $\Br U$ is generated over $\Br_1 U$ by the corestriction from $K$ to $\mathbb{Q}$ of the $2$-torsion element in Lemma \ref{lem:BrU}.
	\end{remark}
	
	\section{Hilbert symbols}\label{sec: computing inv}
	This section describes how to compute the local invariant map. We begin by defining Hilbert symbols and describe their relation to the invariant map. From there we give an algorithm from \cite{CTKS87} which will enable us to compute the Brauer--Manin set for generic diagonal affine cubic surfaces. Here $U$ is as defined in \eqref{eqn:U-main} and we keep the notational convention set up earlier.
	
	\subsection{Construction of the Hilbert symbol}
	Let $K$ be a number field and $v$ a place of $K$. Assume $K_{v}$ contains a primitive $n$th root of unity $\omega_{n}$. There exists a pairing \cite[Chap XIV, \S 2]{S79}
	\[
	(\cdot,\cdot)_{\omega_{n}}:K_{v}^{\ast}/K_{v}^{\ast n}\times K_{v}^{\ast}/K_{v}^{\ast n} \rightarrow \Br K_{v}, \ (a,b)\mapsto (a,b)_{\omega_{n}}.
	\] In $\Br K_{v}$ we have the relations
	\begin{equation}
		\label{eqn:Hilb-general}
		\begin{split}
			(aa', b)_{\omega_{n}} = (a, b)_{\omega_{n}} + (a', b)_{\omega_{n}}, \quad
			(a, b)_{\omega_{n}} = -(b, a)_{\omega_{n}}, \quad
			(a^n , b)_{\omega_{n}} = 0.
		\end{split}
	\end{equation}
	
	We define the \emph{Hilbert symbol}
	\[
	(a, b)_{\omega_{n},v} = \inv_v (a, b)_{\omega_{n}} \in \QQ/\ZZ.
	\]
	For any non-zero prime ideal $\mathfrak{p}$ of $K$, which does not lie over any of the prime divisors of $n$, with associated place $v$
	\begin{equation}
		\label{eqn:Hilb-units-uniformiser}
		(u, u')_{\omega_{n}, v} = 0, \quad \text{and} \quad 
		(u, \pi)_{\omega_{n}, v} = 0 \Leftrightarrow u \in K_{v}^{n} ,
	\end{equation}
	for $u, u' \in \mathcal{O}_{K_v}^{\ast}$ and $\pi \in \mathcal{O}_{K_{v}}$ a uniformiser. For $n=3$ we recall some formulae from \cite[\S4]{CTKS87} who identify $\tfrac{1}{3}\mathbb{Z}/\mathbb{Z}$ with $\mathbb{Z}/3\mathbb{Z}$, which agrees with \cite[Chap XIV, \S 2]{S79}. Throughout the rest of the paper we make the same identification. 
	
	Let $\omega$ be a fixed root of $x^2 + x + 1$. We fix an isomorphism $k:=\mathbb{Q}(\omega) \cong \mathbb{Q}[x]/(x^2+x+1)$ by sending $x$ to $\omega$.
	
	\begin{enumerate}
		\item Let $p$ be a prime where $p\equiv 1$ mod 3. As ideals in $\mathcal{O}_{k}$ we have $(p)=\mathfrak{p}_{1}\mathfrak{p}_{2}$. Denote by $v$ a place corresponding to the prime ideal $\mathfrak{p}_{1}$ or $\mathfrak{p}_{2}$ extending from the $p$-adic valuation on $\mathbb{Q}$. If $u$ is a unit in the ring of integers of $\mathbb{Q}_{p}\cong k_{v}$, then we have the formula 
		\begin{equation}
			\label{eqn:Hilbert-unit-uniformiser-p=1(3)}
			(u,p)_{\omega,v}=-i \text{ mod }3
		\end{equation} 
		where $u^{\frac{p-1}{3}}\equiv \omega^{i}$ mod $p$. In particular $(\omega,p)_{v}\equiv -\frac{p-1}{3}$ mod $3$.
		
		\item Let $p$ be a prime where $p\equiv 2$ mod 3. As ideals in $\mathcal{O}_{k}$ we have $(p)=\mathfrak{p}$. Denote by $v$ a place corresponding to the prime ideal $\mathfrak{p}$ extending from the $p$-adic valuation on $\mathbb{Q}$. If $u$ is a unit in the ring of integers of $\mathbb{Q}_{p}(\omega)\cong k_{v}$, we have the formula 
		\begin{equation}
			\label{eqn:Hilbert-unit-uniformiser-p=2(3)}
			(u,p)_{\omega,v}=-i \text{ mod }3
		\end{equation}
		where $u^{\frac{p^{2}-1}{3}}\equiv \omega^i$ mod $p$. In particular we have $(\omega,p)_{\omega, v}\equiv -\frac{p^2-1}{3}$ mod $3$.
		
		\item Let $p=3$, then as ideals in $\mathcal{O}_{k}$ we have $(3)=\mathfrak{p}^2$. Note $\mathcal O_{k} = \mathbb Z[\omega]$ and the prime ideal $\mathfrak{p}$ is generated by $\lambda'= 2\omega+1$ which satisfies $\lambda'^2=-3$. We choose the uniformizer $\lambda =  \lambda'^2\omega +\lambda' = -3\omega + (2\omega + 1) = 1-\omega $, with minimal polynomial $\lambda^2-3\lambda+3=0$ and the following relations 
		\begin{equation}\label{eqn: equations for lambda at the prime 3}
			\begin{aligned}
				\omega &= 1-\lambda, &&3\equiv -\lambda^2-\lambda^3 \text{ mod } \lambda^4,\\
				3 &= -\omega^2\lambda^2, &&2\equiv -1-\lambda^2-\lambda^3 \text{ mod } \lambda^4.
			\end{aligned}
		\end{equation}
		The relations for $(-,-)_{\mathfrak p}$ are given in \cite[p. 34]{CTKS87}, however we give a summary of the most important facts. Any element in $\mathbb Z_3[\omega]$ can be written as $\pm \lambda^eu$ where $u$ is a $1$-unit.  We will write a $1$-unit $u_b$ as $1+b_1\lambda+b_2\lambda^2+\ldots$ with $b_i \in \mathbb Z$. To compute the symbol at $\mathfrak p$ with associated place $v$, we will only need the following information
		\begin{align}
			(u_b, u_c)_ {\omega,v} &\equiv b_1c_1(b_1-c_1) -b_1c_2+b_2c_1 \bmod 3,\label{eqn: hilbert symbol at 3 for two units}\\
			(\lambda,u_b)_{\omega, v} &\equiv \frac{b_1-b_1^3}3 + b_1b_2-b_3 \bmod 3.\label{eqn: hilbert symbol at 3 for uniformiser and unit}
		\end{align}
	\end{enumerate}
	
	\begin{remark}
		Note the relation (\ref{eqn: hilbert symbol at 3 for two units}) only depends on $b_i,c_i$ modulo $3$ and not on any $b_i,c_i$ with $i\geq 3$. For the relation (\ref{eqn: hilbert symbol at 3 for uniformiser and unit}) we need to know $b_1$ modulo $9$, $b_2,b_3$ modulo $3$ and none of the $b_i$ with $i \geq 4$.
	\end{remark}
	
	\subsection{Conditions for a Brauer--Manin obstruction}
	We are now in position to establish sufficient conditions for the integral Brauer--Manin set of $\sU$ to be non-empty. This is done in Propositions \ref{prop:p divides a_0} and \ref{prop:p divides a_1 but}. We write $\mathcal{B}'$ for both the preimage in $\Br X$ of a generator of $\Br X/\Br_{0}X$ and its restriction to $U$ when $\Br X/\Br_{0}X$ is cyclic. 
	
	\begin{proposition}
		\label{prop:p divides a_0}
		Assume that there exists a prime $p\neq 3$ such that $p \mid a_{0}$ but $p^{3}\nmid a_{0}$ and $p \nmid a_1 a_2 a_3$. If $\mathcal{U}(\mathbb{Z}_{p})\neq \emptyset$, then $\inv_{p}\mathcal{B}' : \mathcal{U}(\mathbb{Z}_{p})\rightarrow \frac{1}{3}\mathbb{Z}/\mathbb{Z}$ is surjective.
		
	\end{proposition}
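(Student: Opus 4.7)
The plan is to reduce the surjectivity of $\inv_{p}\mathcal{B}$ on $\mathcal{U}(\ZZ_{p})$ to the surjectivity of the reduction modulo $p$ of an explicit rational function onto the three cube classes of a finite field, then verify the latter via Hensel lifting.

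First, I would use Proposition~\ref{prop: algebaric Brauer group for affine is the same as projective}(3) to write $\mathcal{B}=\Cores_{k/\QQ}\mathcal{A}$ for a generator $\mathcal{A}\in \Br U_{k}/\Br k$. By the method used in Lemma~\ref{lem:BrX} (specialised, if necessary, after a purely cubic extension as in Proposition~\ref{prop: no extra 3 torsion in Brauer group}), one obtains a cyclic representative of the form $\mathcal{A}=(\alpha,f)_{\omega}$ with $\alpha \in \QQ^{\times}$ whose $p$-adic valuation equals $v_{p}(a_{0})\in\{1,2\}$ and with $f$ a rational function on $U_{k}$ regular at the chosen $\ZZ_{p}$-point. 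By the functoriality of the invariant under corestriction, for any $P\in \mathcal{U}(\ZZ_{p})$ one has
\[
\inv_{p}\mathcal{B}(P)=\sum_{\frakp\mid p}\inv_{\frakp}(\alpha,f(P))_{\omega},
\]
the sum running over the one or two primes of $k$ above $p$ according as $p$ is inert ($p\equiv 2\bmod 3$) or split ($p\equiv 1\bmod 3$); the case $p=3$ is excluded by hypothesis.

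Next, since $p\neq 3$ the extension $k_{\frakp}/\QQ_{p}$ is unramified, so $v_{\frakp}(\alpha)=v_{p}(a_{0})$ is coprime to $3$. Combining this with $p\nmid a_{1}a_{2}a_{3}a_{0}/p^{v_{p}(a_{0})}$, the only nontrivial contribution to $(\alpha,f(P))_{\omega,\frakp}$ is the term $v_{p}(a_{0})\cdot (\pi_{p},f(P))_{\omega,\frakp}$ up to a $P$-independent constant. Using formulae \eqref{eqn:Hilb-units-uniformiser}, \eqref{eqn:Hilbert-unit-uniformiser-p=1(3)} and \eqref{eqn:Hilbert-unit-uniformiser-p=2(3)}, this is precisely the cube class of the reduction $\bar{f}(P)$ in the residue field $\kappa(\frakp)$, multiplied by the unit $-v_{p}(a_{0})\bmod 3$. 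Therefore,
\[
\inv_{p}\mathcal{B}\ \text{is surjective}\quad\Longleftrightarrow\quad \{\bar f(P)\bmod \kappa(\frakp)^{\times 3}:P\in \mathcal{U}(\ZZ_{p})\}=\kappa(\frakp)^{\times}/\kappa(\frakp)^{\times 3}.
\]

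Finally, I would check the hypotheses of Hensel's lemma. Because $p\nmid 3a_{1}a_{2}a_{3}$ and $v_{p}(a_{0})\le 2$, at any $\ZZ_{p}$-point $(u_{1},u_{2},u_{3})$ at least one coordinate is a unit (else $a_0$ would have $p$-valuation at least $3$), and the corresponding partial derivative $3a_{i}u_{i}^{2}$ is a unit; hence $\mathcal{U}\to\Spec\ZZ_{p}$ is smooth, and the reduction map $\mathcal{U}(\ZZ_{p})\twoheadrightarrow \bar{\mathcal{U}}(\FF_{p})$ is surjective. It therefore suffices to show that on the special fibre $\bar{\mathcal{U}}$, defined by $a_{1}u_{1}^{3}+a_{2}u_{2}^{3}+a_{3}u_{3}^{3}\equiv 0\pmod{p}$, the reduction $\bar{f}$ takes all three cube classes in $\kappa(\frakp)^{\times}$.

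The main obstacle will be this last surjectivity, which must be handled \emph{for every} admissible prime $p$, not only sufficiently large ones. The approach I would take is to fix a chosen cube class $c\in\kappa(\frakp)^{\times}/\kappa(\frakp)^{\times 3}$ and, using that $\bar{f}$ is a ratio of $\FF_{p}$-linear forms, reformulate the existence of a smooth $\FF_{p}$-point with $\bar{f}\equiv c$ as a count of $\FF_{p}$-solutions to an auxiliary diagonal cubic curve obtained by intersecting $\bar{\mathcal{U}}$ with the preimage of $c$ under $\bar f$. For $p\ge 7$ the existence follows from the Hasse--Weil bound applied to this (possibly reducible) genus-one curve. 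For the finitely many small primes $p\in\{2,5\}$, a direct verification exploiting the explicit list of $\FF_{p}$-points on the cubic Fermat-type curve $a_{1}u_{1}^{3}+a_{2}u_{2}^{3}+a_{3}u_{3}^{3}=0$ over $\FF_{p}$ completes the proof; in these small residues the group $\kappa(\frakp)^{\times}/\kappa(\frakp)^{\times 3}$ is either trivial (when $p\equiv 2\bmod 3$ and $\kappa(\frakp)=\FF_{p}$) or of order three, and the argument simplifies accordingly.
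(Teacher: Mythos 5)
Your approach differs substantially from the paper's, and it has a genuine gap. The paper does not produce an explicit cyclic-algebra representative of $\mathcal B$. Instead it observes that, since $p\mid a_0$ with $1\le v_p(a_0)\le 2$ and $p\nmid 3a_1a_2a_3$, the reduction of $\mathcal X$ at $p$ is a cone over the smooth plane cubic $E\colon a_1x_1^3+a_2x_2^3+a_3x_3^3=0$, and $\inv_p\mathcal B$ factors through the composite $\mathcal U(\ZZ_p)\to\mathcal X(\ZZ_p)\xrightarrow{\mathrm{red}}E(\FF_p)$ followed by a \emph{surjective group homomorphism} $E(\FF_p)\to\tfrac13\ZZ/\ZZ$; both the factoring and the surjectivity of the homomorphism are quoted from Jahnel's book \cite[Chap~4, Thm~6.4~c)i)]{J14}. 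Combined with the surjectivity of the reduction map (your Hensel argument is correct and is the same one the paper implicitly uses), this gives the result in one line. The citation to \cite{J14} is precisely the ingredient your proof is missing: it replaces everything after your first display.

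Concretely, the gap in your proposal is at the first step. You need a generator of $\Br X/\Br_0X$ of the form $(\alpha,f)_\omega$ with $v_p(\alpha)=v_p(a_0)$ and $f$ a ratio of linear forms, but the paper proves no such uniform representative exists for the full diagonal family (it even cites \cite[Thm.~1.2]{U14} in Section~\ref{sec:2-coeff} for precisely this fact); Lemma~\ref{lem:BrX} only supplies one when $a_1=a_2=1$. Your fallback — passing to a purely cubic extension as in Proposition~\ref{prop: no extra 3 torsion in Brauer group} — moves the generator into $\Br U_K$ for $K=\QQ(\sqrt[3]{a_2})$, but that is a cubic (not quadratic) extension whose corestriction back to $\QQ$ is not addressed, and the place $p$ can split, stay inert, or ramify in $K$, which would have to be tracked. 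So you cannot, as written, reduce $\inv_p\mathcal B(P)$ to the cube class of $\bar f(P)$ in full generality. There is also a small structural error at the end: when $p\equiv 2\bmod 3$, $p$ is inert in $k=\QQ(\omega)$, so $\kappa(\frakp)=\FF_{p^2}$, never $\FF_p$, and $\kappa(\frakp)^\times/\kappa(\frakp)^{\times 3}\cong\ZZ/3\ZZ$ is nontrivial; your parenthetical condition is vacuous and the intended simplification does not occur. Even setting aside these issues, your plan to prove surjectivity of $\bar f$ onto the cube classes by intersecting with auxiliary cubics and applying Hasse--Weil only works for $p$ large, and the paper's hypothesis includes all $p\neq 3$; the cited theorem from \cite{J14} handles the small primes uniformly, which is exactly why the paper leans on it rather than on an explicit Hilbert-symbol calculation.
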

	
	\begin{proof}
		Under the conditions of the proposition we have $\Br X/\Br_{0}X\cong \mathbb{Z}/3\mathbb{Z}$. Consider the composition 
		\[
		\mathcal{U}(\mathbb{Z}_{p})\xrightarrow{}{} \mathcal{X}(\mathbb{Z}_{p})\xrightarrow{\text{red}}{} E(\mathbb{F}_{p}),
		\] where $E$ is the elliptic curve \[
		a_{1}x_{1}^3+a_{2}x_{2}^3+a_{3}x_{3}^3=0.
		\] Clearly, this composition is surjective. The invariant map $\inv_{p}\mathcal{B}'$ factors as the surjective map $\text{red}$ and a surjective homomorphism $E(\mathbb{F}_{p})\rightarrow \frac{1}{3}\mathbb{Z}/\mathbb{Z}$, \cite[Chap~4, Thm~6.4~c)i)]{J14}. By functionality we can deduce that $\inv_{p}\mathcal{B}':\mathcal{U}(\mathbb{Z}_{p})\rightarrow  \frac{1}{3}\mathbb{Z}/\mathbb{Z}$ is surjective.
	\end{proof}
	
	\begin{proposition}
		\label{prop:p divides a_1 but}
		Assume that there are $p \ge 17$ and $i \in \{1, 2, 3\}$ such that $p \mid a_i$ but $p^3 \nmid a_i$ and $p \nmid  a_j$ for any $j \in \{0, 1, 2, 3\} \setminus \{i\}$. Then  $\inv_p \sB' : \sU(\Zp) \rightarrow \frac{1}{3}\ZZ/\ZZ$ is surjective.
	\end{proposition}
	
	\begin{proof}
		The proof is very similar to the one of Proposition~\ref{prop:p divides a_0} with the only difference that the local invariant map now factors through the composition
		\[
		\sU(\Zp) \xrightarrow{}{} \sX(\Zp) \xrightarrow{\text{red}}{} (E \setminus R)(\Fp),
		\]
		where if $i = 1$ the elliptic curve is $E: a_2x_2^3 + a_3x_3^3 - a_0x_0^3 = 0$ and $R$ is the divisor on $E$ given by the vanishing locus of $x_0$. As before this composition is surjective and hence the proof boils down to establishing that the homomorphism $(E \setminus R)(\Fp)\rightarrow \frac{1}{3}\ZZ/\ZZ$ is surjective. This follows from the Hasse--Weil bound and \cite[Chap~4, Thm~6.4~c)i)]{J14} as there are at least $(p + 1 - 2\sqrt{p})/3 - 3$ points with a given value of the local invariant map. This number is clearly positive provided that $p \ge 17$, which confirms our claim.
	\end{proof}
	\subsection{Computing invariant maps for generic families}\label{subsec: invariant map for generic families}
	We give an overview of the work in \cite{CTKS87} which describes how to compute the Brauer--Manin set for diagonal cubic surfaces. Assume that $a_{i}\in \mathbb{Z}_{\ne 0}$ are cube-free. Dividing the defining equation of $X$ by $a_{0}$ gives
	\[
	X\colon \quad x_{0}^3+\lambda x_{1}^3+ \mu x_{2}^3 + \lambda \mu \nu x_{3}^3 = 0,
	\]
	where $\lambda, \mu, \nu \in \mathbb Q^{\ast}$ are $\lambda = a_1/a_0$, $\mu = a_2/a_0$ and $\nu = -a_3a_0/(a_1a_2)$. Pick $\alpha, \gamma, \omega \in \bar{\mathbb Q}$, such that $\alpha^3=\lambda$, $\gamma^3 = \nu$, $\omega^2+\omega+1=0$.
	Define $K:=\mathbb Q(\omega,\alpha, \gamma)$ which is Galois over $k:= \mathbb Q(\omega)$, with Galois group $G$ which is isomorphic to $(\mathbb Z/3\mathbb Z)^2$ and generated by $s,t$ defined by
	\[
	{ }^s\alpha = \alpha, \quad { }^t\alpha = \omega\alpha, \quad { }^s\gamma  = \omega\gamma, \quad { }^t\gamma  = \gamma.
	\]
	Let $\beta:=\alpha\gamma$ and $\delta:=\alpha/\gamma$. This defines the following Galois extensions
	\begin{align*}
		\alpha & = \sqrt[3]{\lambda} & \langle s\rangle = & \Gal(K/K_1)\\
		\beta & = \sqrt[3]{\lambda\nu} = \alpha \gamma & \langle q\rangle = & \Gal(K/K_2), q=st^2 \text{ so } { }^q\beta=\beta \text{ and } { }^q\delta=\omega^2\delta\\
		\gamma & = \sqrt[3]{\nu} & \langle t\rangle = & \Gal(K/L_1)\\
		\delta & = \sqrt[3]{\lambda/\nu} = \alpha/\gamma & \langle r\rangle = &\Gal(K/L_2), r=st \text{ so } { }^r\beta=\omega^2\beta \text{ and } { }^r\delta=\delta.
	\end{align*}
	
	\begin{center}
		{\scriptsize
			\begin{tikzcd}
				& & K \arrow[dll, dash, "s" above] \arrow[dl, dash, "q" below] \arrow[dr, dash, "t" below] \arrow[drr, dash, "r" above] & & & & & \mathbb Q(\alpha,\gamma) \arrow[dll, dash] \arrow[dl, dash] \arrow[dr, dash] \arrow[drr, dash] & &\\
				K_1 \arrow[drr, dash] & K_2 \arrow[dr, dash] & & L_1 \arrow[dl, dash, "\sigma" above] & L_2 \arrow[dll, dash, "\tau" below] & \mathbb Q(\alpha) \arrow[drr, dash] & \mathbb Q(\beta) \arrow[dr, dash] & & \mathbb Q(\gamma) \arrow[dl, dash] & \mathbb Q(\delta) \arrow[dll, dash]\\
				& & k & & & & & \mathbb Q & &
		\end{tikzcd}}
	\end{center}
	\begin{remark}
		\label{rem:constant-inv}
		Let $S$ be a smooth, projective variety over a number field $K$ and $v$ a place of $K$. Then if $S$ is $K_{v}$-rational we have $\Br S_{K_{v}}=\Br K_{v}$ i.e. the invariant map at $v$ is constant for any element in $\Br S$. In the case of diagonal cubic surfaces $X/K$, Colliot-Thélène, Kanevsky and Sansuc give a necessary and sufficient condition for $X$ to be $K_{v}$-rational \cite[\S 5, Lem.~8]{CTKS87}. Namely, let $F$ be a field of characteristic not equal to 3, then $X/F$ is $F$-rational if and only if $X(F)\neq \emptyset$ and $a_{0}a_{1}/a_{2}a_{3}$ is a cube in $F^{\ast}$.
	\end{remark}

	In the case $\Br X/\Br_{0} X\cong \mathbb{Z}/3\mathbb{Z}$, as in \cite[\S3]{CTKS87} we choose a generator $\mathcal{B}\in \Br X_{k}$ such that $\mathcal{B}':=\Cores_{k/\mathbb{Q}}\mathcal{B}$ generates $\Br X/\Br_{0}X$. Then for a place $v$ \[
	\inv_{v}\mathcal{B}'=\sum\limits_{w\mid v}\inv_{w}\mathcal{B}.
	\]
	
	Let $v$ be a finite place of $\mathbb{Q}$ and $w$ a place of $k$ above $v$. Moreover, let $w'$ be a place of $K$ lying above $w$. Table \ref{table:inv maps} describes how to compute $\inv_w \mathcal{B}(P_v)$ at a point $P_v \in X(\mathbb Q_v) \subseteq X(k_w)$ which is dependent on the decomposition group $G^{v}=\Gal(K_{w'}/k_{w})$.

	\begin{center}
		\begin{tabular}{c|c|c|c}
			Condition on $a_{0}$, $a_{1}$, $a_{2}$, $a_{3}$ & Condition on $\lambda,\nu$ & $G^v$ & $\inv_w\mathcal B(P_v)$\\
			\hline\hline
			$a_{0}/a_{1},a_{2}/a_{3}\in \mathbb Q_{v}^{\ast 3}$, or & $\lambda, \nu \in \mathbb Q_{v}^{\ast 3}$ & $\langle e\rangle$ & $= 0$\\
			$a_{0}a_{3}/a_{1}a_{2}, a_{0}a_{2}/a_{1}a_{3}\in \mathbb Q_{v}^{\ast 3}$ & & &\\
			\hline
			$a_{0}a_{3}/a_{1}a_{2}\in \mathbb Q_{v}^{\ast 3}$ & $\nu\in \mathbb Q_{v}^{\ast 3}$ & $\langle t\rangle$ & $= 0$\\
			\hline
			$a_{0}a_{2}/a_{1}a_{3}\in \mathbb Q_{v}^{\ast 3}$ & $\lambda/\nu\in \mathbb Q_{v}^{\ast 3}$ & $\langle r\rangle$ & $({ }^q\epsilon/\epsilon \cdot \eta,\lambda)_{\omega,w} = ({ }^q\epsilon/\epsilon \cdot \eta,\nu)_{\omega,w}$\\
			\hline
			$a_{1}/a_{0}\in \mathbb Q_{v}^{\ast 3}$ & $\lambda\in \mathbb Q_v^{\ast 3}$ & $\langle s\rangle$ & $(f(P_v)/\eta,\nu)_{\omega,w}$ \\
			\hline
			$a_{3}/a_{2}\in \mathbb Q_{v}^{\ast 3}$ & $\lambda\nu\in \mathbb Q_{v}^{\ast 3}$ & $\langle q\rangle$ & $({ }^r\epsilon/\epsilon \cdot 1/h(P_v)\cdot { }^t\eta ,\lambda)_{\omega,w}$ \\
			\hline
			Otherwise & Otherwise & $G$ & $(N_s(\xi(P_v)) f(P_v)/\eta,\nu)_{\omega,w}$
		\end{tabular}
		\captionof{table}{Computing invariant maps of $X$}\label{table:inv maps}
	\end{center}
	Here
	\begin{align*}
		f = \frac{x_{0}+\alpha \omega x_{1}}{x_{0}+\alpha \omega^2 x_{1}} \text{  and   } h=\frac{x_{2}+\beta \omega x_{3}}{x_{2}+\beta x_{3}}.
	\end{align*} $\epsilon \in K^{\ast}, \eta\in K_{1}^{\ast}$ satisfy the following equations
	\begin{equation}\label{eq:norm equation for epsilon}
		\Norm_{K/L_{1}}(\epsilon)=-\mu \text{ and } \eta/{ }^r\eta=-\mu/\Norm_{K/K_{1}}(\epsilon).
	\end{equation} Furthermore, $\xi(P_{v})\in K_{w'}^{*}$ \cite[p. 39]{CTKS87} satisfies \[
	(1-t)(\xi(P_{v}))=g(P_{v})/\epsilon.
	\]
	\begin{remark}\label{rem:good epsilon}
		Suppose we choose $\epsilon$ such that $\epsilon = \epsilon_{\beta}\epsilon_{\delta}$ where $\epsilon_{\beta}\in K_{2}^{\ast}$ and $\epsilon_{\delta}\in L_{2}^{\ast}$ then we can choose $\eta=1$ \cite[p. 30]{CTKS87}. If $\epsilon=\epsilon'$ or $\epsilon = 1/\epsilon''$ where $\epsilon'$ and $\epsilon''$ are products of integral elements of $\mathbb{Q}(\beta)$ and $\mathbb{Q}(\delta)$ then $\inv_{w}\mathcal{A} = 0$ for all places $w$ of good reduction on $X_{k}$ \cite[p. 31]{CTKS87}.

		The last situation is clearly satisfied if $\mu \in \Norm_{\QQ(\beta)/\QQ}\QQ(\beta)^{\ast}$. Equivalently one can show that $\mu \in \Norm_{k(\beta)_{w'}/k_w}k(\beta)_{w'}^{\ast}$ for all $w \in \Omega_k$ and any place $w'$ of $k(\beta)$ lying above $w$. This local condition is satisfied in the following cases
		\begin{enumerate}
			\item[(i)] $w$ is a place of good reduction for $X_{k}$;
			\item[(ii)] $\nu$ is a cube in $k_w^{\ast}$;
			\item[(iii)] $\lambda/\nu$ is a cube in $k_w^{\ast}$, or
			\item[(iv)] $\mu/\nu$ is a cube in $k_w^{\ast}$, but $w\ne w_3$ where $w_3$ is the unique place of $k$ dividing $3$.
		\end{enumerate}
		All these statements can be found in Proposition~4 in \cite{CTKS87}.
	\end{remark}

	\section{A three coefficient family}
	\label{sec:3-coeff}
	We shall focus in this section on the affine diagonal cubic surfaces given in \eqref{eqn:U-main} with $a_1 = a_2$. It is convenient for the remainder of the section to set $k = \QQ(\omega)$. The data collected here will be used to construct an explicit family of Hasse failures in section \ref{sec:lower}, allowing us to prove the lower bound in Theorem~\ref{thm:N(B)}. As $U$ is isomorphic over $\QQ$ to the surface
	\[
	U\colon \quad u_1^3 + u_2^3 + (a_3/a_1)u_3^3 = a_0/a_1,
	\]
	we can use the results from section \ref{sec:2-coeff} for the Brauer group of $U$. Since we are no longer concerned with $\Br X$, we will abuse notation by using $\sB$ and $\sB'$ for their images under the natural map $\Br X \rightarrow \Br U$. This should cause no confusion in computing local invariant maps by functoriality. We will be primarily interested in the algebraic Brauer element from Lemma~\ref{lem:BrX}, that is
	\[
	\sB' = \Cores_{k/\QQ} \sB \in \Br U, \quad \text{where} \quad 
	\sB = \left(\frac{a_{0}}{a_{3}},\frac{x_{1}+\omega x_{2}}{x_{1}+x_{2}}\right)_{\omega} \in \Br U_{k},
	\]
	which generates $\Br U / \Br \QQ$ by Lemmas~\ref{lem:BrX} and \ref{lem:BrU} unless $a_3/a_1 \equiv 2 \bmod \QQ^{*3}$. We need the following lemma in order to evaluate the local invariant map of $\sB$.
	
	\begin{lemma}
		\label{lem:invp}
		Let $p \in \ZZ$ be a prime and fix $\frakp \mid p$ in $\mathcal{O}_k$. If $\sigma$ generates $\Gal(k/\QQ)$, then
		\[
		\inv_p \sB' = 
		\begin{cases}
			\inv_{\frakp}(1 + \sigma)\sB &\text{if } p \equiv 1 \bmod 3,\\
			\inv_{\frakp}\sB &\text{if } p = 3 \text{ or } p \equiv 2 \bmod 3.
		\end{cases}
		\]
		Moreover, $(1 + \sigma)\sB$ can be explicitly expressed as
		\[
		(1 + \sigma)\sB = \(\frac{a_0}{a_3},\frac{u_1 + \omega u_2}{u_1 + \omega^2 u_2}\)_{\omega}.
		\]
	\end{lemma}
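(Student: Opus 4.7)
The plan is to combine two ingredients: the compatibility of corestriction with the sum of local invariants, and an explicit computation of the Galois conjugate $\sigma \sA$. For any $\beta\in \Br U_K$ and any finite place $p$ of $\QQ$ one has
\[
	\inv_p \Cores_{K/\QQ} \beta = \sum_{\frakq\mid p} \inv_{\frakq} \beta,
\]
the sum ranging over primes $\frakq$ of $K$ above $p$; this is the standard local--global compatibility of corestriction. With $\beta=\sA$, the lemma splits into the two possible splitting types of $p$ in $K=\QQ(\omega)$.

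If $p\equiv 2\bmod 3$ then $p$ is inert, so the unique prime $\frakp$ above $p$ immediately yields $\inv_p \sA' = \inv_\frakp \sA$. If $p\equiv 1\bmod 3$ then $p$ splits as $\frakp\cdot\sigma(\frakp)$, and the Galois element $\sigma$ induces an isomorphism $K_{\sigma(\frakp)}\xrightarrow{\sim} K_\frakp$ over $\QQ_p$; functoriality of the local invariant then yields $\inv_{\sigma(\frakp)}\sA = \inv_\frakp(\sigma\sA)$. Summing the two contributions gives
\[
	\inv_p\sA' = \inv_\frakp\sA + \inv_{\sigma(\frakp)}\sA = \inv_\frakp(\sA+\sigma\sA) = \inv_\frakp(1+\sigma)\sA,
\]
as claimed.

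For the displayed formula, observe that $a_0/a_3\in\QQ^{\ast}$ and the rational functions $u_1,u_2$ are defined over $\QQ$, so the only effect of $\sigma$ on $\sA$ is to replace every occurrence of $\omega$ by $\omega^2$, giving
\[
	\sigma\sA = \left(\frac{a_0}{a_3},\, \frac{u_1+\omega^2 u_2}{u_1+u_2}\right)_{\omega^2}.
\]
The identity $(a,b)_{\omega^2}=2\,(a,b)_\omega = -(a,b)_\omega$ holds because cyclic algebras of exponent $3$ are $3$-torsion in $\Br U_K$; combining it with the additivity in the second argument from \eqref{eqn:Hilb-general} collapses the difference of the two rational functions and yields the stated formula for $(1+\sigma)\sA$. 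The only delicate point is tracking the sign coming from changing $\omega\mapsto\omega^2$ in the cyclic algebra; the remaining manipulations are routine.
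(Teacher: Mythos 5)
Your proof is essentially correct and follows the same overall strategy as the paper: decompose according to whether $p$ splits or is inert in $K=\QQ(\omega)$, use the compatibility of corestriction with the sum of local invariants, and transport $\inv_{\sigma(\frakp)}$ to $\inv_\frakp$ via the isomorphism induced by $\sigma$. The paper invokes \cite[Lem.~5.i]{BS16} for the corestriction formula and a commutative-diagram argument adapted from \cite[Lem.~4.2]{GLN22} for the equality $\inv_\frakp\sA = \inv_{\sigma(\frakp)}\sigma(\sA)$; your appeal to ``standard local--global compatibility'' and ``functoriality of the local invariant'' covers the same ground, and your streamlined chain $\inv_\frakp\sA + \inv_{\sigma(\frakp)}\sA = \inv_\frakp(\sA + \sigma\sA)$ is a perfectly valid rearrangement (the paper instead passes through $\inv_{\sigma(\frakp)}(1+\sigma)\sA$ and then uses Galois invariance of $(1+\sigma)\sA$).

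The one imprecision is in your justification of the identity $(a,b)_{\omega^2} = -(a,b)_{\omega}$. You write that this ``holds because cyclic algebras of exponent $3$ are $3$-torsion in $\Br U_K$,'' but $3$-torsion only gives $2(a,b)_{\omega} = -(a,b)_{\omega}$; it says nothing a priori about how $(a,b)_{\omega^2}$ compares to $(a,b)_{\omega}$. What is actually needed is the transformation law for cyclic algebras under replacing the chosen primitive $n$-th root of unity $\zeta$ by $\zeta^i$ with $ii'\equiv 1\bmod n$, namely $(a,b)_{\zeta^i} = (a,b^{i'})_{\zeta}$; for $n=3$, $i=i'=2$, this yields $(a,b)_{\omega^2}=(a,b^2)_{\omega}=(a,b^{-1})_{\omega}=-(a,b)_{\omega}$. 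The paper uses exactly this in the equivalent form $(a,b)_{\omega^2} = (a,b^{-1})_{\omega}$ before applying additivity in the second slot. Your final formula for $(1+\sigma)\sA$ is nevertheless correct; you only need to replace the $3$-torsion justification with this change-of-root-of-unity identity.
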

	
	\begin{proof}
		If $p = 3$ or $p \equiv 2 \bmod 3$ there is a single prime $\frakp$ above $p$ in $k$ and thus the claim follows from \cite[Lem.~5.i]{BS16}. If $p \equiv 1 \bmod 3$, then $(p) = \frakp \sigma(\frakp)$ is split in $\mathcal{O}_k$. It follows from \cite[Lem.~5.i]{BS16} that $\inv_p\sB' = \inv_{\frakp} \sB + \inv_{\sigma(\frakp)} \sB$. It thus suffices to show that $\inv_{\sigma(\frakp)} \sB = \inv_{\frakp} \sigma(\sB)$ since the local invariant map is a homomorphism.
		
		To see this, we apply a similar analysis to the one appearing in \cite[Lem.~4.2.]{GLN22}. Consider the diagram
		\begin{align*}
			\xymatrixcolsep{4pc}\
			\xymatrix{
				\Br U_k \ar[d]^{x_p} \ar[rr]^{\sigma} && \Br U_k \ar[d]^{x_p} \\\
				\Br k_{\frakp} \ar[d]^{\inv_{\frakp}} \ar[r]_{\simeq} & \Br \Qp \ar[d]^{\inv_p} & \Br k_{\sigma(\frakp)} \ar[d]^{\inv_{\sigma(\frakp)}} \ar[l]^{\simeq} \\\
				\QQ/\ZZ \ar[r]_{\text{id}} & \mathbb{Q}/\mathbb{Z}   & \ar[l]^{\text{id}} \QQ/\ZZ
			}
		\end{align*}
		As explained in the proof of \cite[Lem.~4.2.]{GLN22} it commutes. The commutativity of the top square follows from the definition of the embeddings $k \to k_{\frakp}$ and $k \to k_{\sigma(\frakp)}$, and that of the bottom squares by \cite[Prop.~II.1.4]{Neu13}. As a conclusion we see that $\inv_{\frakp} \sB = \inv_{\sigma(\frakp)} \sigma(\sB)$ is now implied by chasing the above diagram. 
		
		We have so far shown that $\inv_p \sB' = \inv_{\sigma(\frakp)} (1 + \sigma)\sB$. Since $\sigma$ generates $\Gal(k/\QQ)$, we have $\sigma^2 = 1$ and hence $(1 + \sigma)\sB$ is Galois invariant. Thus $\inv_{\sigma(\frakp)} (1 + \sigma)(\sB) = \inv_{\sigma^2(\frakp)} \sigma(1 + \sigma)(\sB) = \inv_{\frakp} (1 + \sigma)(\sB)$ by the above commutative diagram, which confirms the first part of the statement. Finally, observe that $(1 + \sigma)\sB$ is given by
		\[
		\begin{split}
			(1 + \sigma)(\sB)
			&= \(\frac{a_0}{a_3},\frac{u_1+\omega u_2}{u_1 + u_2}\)_{\omega} + \(\frac{a_0}{a_3},\frac{u_1+\omega^2 u_2}{u_1 + u_2}\)_{\omega^2} \\
			&= \(\frac{a_0}{a_3},\frac{u_1+\omega u_2}{u_1 + u_2}\)_{\omega} + \(\frac{a_0}{a_3},\frac{u_1 + u_2}{u_1 + \omega^2 u_2}\)_{\omega}
			= \(\frac{a_0}{a_3},\frac{u_1 + \omega u_2}{u_1 + \omega^2 u_2}\)_{\omega}.
		\end{split}
		\]
		This completes the proof of Lemma~\ref{lem:invp}.
	\end{proof}
	
	Assume now that $\sU(\Zp) \neq \emptyset$. We proceed by computing the local invariant map of $\sB'$ in various cases depending on $p$ and the coefficients of $\sU$. 
	
	\begin{lemma}
		\label{lem:infinity}
		We have $\inv_{\infty} \sB' = 0$.
	\end{lemma}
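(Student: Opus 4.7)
The proof will be essentially immediate from the corestriction formula for local invariants combined with the geometry of $K=\QQ(\omega)$ at infinity. The plan is as follows.

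First, I would observe that $K = \QQ(\omega)$ is an imaginary quadratic field since $\omega$ satisfies $\omega^2 + \omega + 1 = 0$ and hence is not real. Consequently, the archimedean place $\infty$ of $\QQ$ is inert in $K$: there is a unique place $w \mid \infty$ of $K$ and the corresponding completion is $K_w = \CC$.

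Next, I would apply the standard compatibility of the corestriction with local invariants (the same formula used in the proof of Lemma~\ref{lem:invp} via \cite[Lem.~5.i]{BS16}): for any real point $P_\infty \in \sU(\RR) \subseteq \sU(K_w)$,
\[
	\inv_\infty \sA'(P_\infty) \;=\; \sum_{w \mid \infty} \inv_w \sA(P_\infty) \;=\; \inv_w \sA(P_\infty).
\]
Since $\Br K_w = \Br \CC = 0$, the right-hand side vanishes, giving $\inv_\infty \sA' = 0$ as claimed.

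There is really no obstacle here; the content is entirely in recognising that the extension $K/\QQ$ kills the real place. Alternatively, one can phrase this purely on the level of the Brauer element itself: the base change of $\sA \in \Br U_K$ along the embedding $K \hookrightarrow \CC$ lands in $\Br U_\CC$, which is then evaluated at a point of $U(\CC)$ in $\Br \CC = 0$, and this already shows that the contribution of $\sA$ to $\inv_\infty \sA'$ vanishes. Either formulation yields the lemma immediately.
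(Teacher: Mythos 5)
Your proof is correct, but it takes a genuinely different route from the paper's. The paper's one-line argument invokes Remark~\ref{rem:constant-inv}: the compactification $X$ is $\RR$-rational (since $X(\RR)\ne\emptyset$ and every real number is a cube), so the invariant map at $\infty$ is constant on $X(\RR)$, and the specific normalisation $a_0/a_3\in\RR^{*3}$ forces the constant to be $0$. You instead observe that $K=\QQ(\omega)$ is imaginary quadratic, so the archimedean place of $\QQ$ has a single complex place $w$ above it with $K_w=\CC$; applying the corestriction--invariant compatibility $\inv_\infty \sA'(P_\infty)=\sum_{w\mid\infty}\inv_w\sA(P_\infty)$ and $\Br\CC=0$ then kills the right-hand side. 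Your argument is cleaner in the sense that it requires no rationality statement about $X$ and works purely from the description of $\sA'$ as a corestriction; it would apply verbatim to any Brauer class of the form $\Cores_{K/\QQ}$ of a class over an imaginary quadratic field, regardless of the geometry of the underlying variety. The paper's approach, by contrast, is uniform with the treatment of the other places in Table~\ref{table:inv maps} and does not need the explicit corestriction structure. Both are valid.
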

	
	\begin{proof}
		The claim follows from Remark~\ref{rem:constant-inv} and the fact that $a_0/a_3 \in \RR^{*3}$.
	\end{proof}
	
	\begin{lemma}
		\label{lem:cube}
		If $p \neq 3$ and $a_0/a_3 \in \Qp^{*3}$, then $\inv_p \sB' = 0$.
	\end{lemma}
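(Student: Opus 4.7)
The plan is to reduce the computation of $\inv_p \sA'$ to the local invariant of a suitable cyclic algebra over $K_{\frakp}$ and then exploit the cube hypothesis on the first entry.

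First I would apply Lemma~\ref{lem:invp}: fixing a prime $\frakp$ of $K$ above $p$, the global invariant $\inv_p \sA'$ equals $\inv_{\frakp} \sA$ if $p \equiv 2 \bmod 3$, and equals $\inv_{\frakp}(1+\sigma)\sA$ if $p \equiv 1 \bmod 3$. In either case the relevant class is a cyclic algebra of the form $(a_0/a_3, f)_{\omega}$ for some rational function $f$, since the explicit expression for $(1+\sigma)\sA$ given in the lemma also has $a_0/a_3$ as its first slot.

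Next I would invoke the standard relations for Hilbert symbols recalled in \eqref{eqn:Hilb-general}, in particular $(c^3, b)_{\omega} = 0$ in $\Br K_{\frakp}$. Since by assumption $a_0/a_3 \in \Qp^{*3}$, and the inclusion $\Qp \hookrightarrow K_{\frakp}$ yields $a_0/a_3 \in K_{\frakp}^{*3}$, the cyclic algebras $\sA$ and $(1+\sigma)\sA$ both become trivial after evaluation in $\Br K_{\frakp}$, regardless of the point at which they are evaluated. Consequently their invariants vanish, and combining with the first step gives $\inv_p \sA' = 0$.

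There is no real obstacle here: the argument is essentially the functoriality of the corestriction combined with the cube-killing relation for cyclic algebras. The only mild subtlety is remembering that the hypothesis $a_0/a_3 \in \Qp^{*3}$ transfers to $K_{\frakp}^{*3}$ for every $\frakp \mid p$, which is immediate since $\Qp \subseteq K_{\frakp}$.
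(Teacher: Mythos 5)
Your argument is correct and matches the paper's proof exactly: the paper also deduces the lemma directly from Lemma~\ref{lem:invp} together with the Hilbert symbol relations in \eqref{eqn:Hilb-general}, the key point being that the first slot $a_0/a_3$ is a cube in $K_{\frakp}^{*}$ since it already is in $\Qp^{*}$. You have simply spelled out the one-line proof in the paper in more detail.
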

	
	\begin{proof}
		This is a consequence of \eqref{eqn:Hilb-general} and Lemma~\ref{lem:invp}.
	\end{proof}
	
	\begin{lemma}
		\label{lem:p-not-dividing-a_0a_3}
		Assume that $p \neq 3$ and $p \nmid a_0a_1a_3$. Then $\inv_p \sB' = 0$. 
	\end{lemma}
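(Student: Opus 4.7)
The plan is to split the analysis into two cases depending on whether $a_0/a_3 \in \Qp^{\ast 3}$. If $a_0/a_3 \in \Qp^{\ast 3}$, then Lemma~\ref{lem:cube} immediately gives $\inv_p \sA' = 0$, so no further work is required. The bulk of the proof handles the complementary case $a_0/a_3 \notin \Qp^{\ast 3}$.

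In that case, fix any $(u_1, u_2, u_3) \in \sU(\Zp)$ and invoke Lemma~\ref{lem:invp} to reduce the computation of $\inv_p \sA'$ to a single Hilbert symbol $(a_0/a_3, v)_{\omega, \frakp}$, where
\[
v = \frac{u_1 + \omega u_2}{u_1 + \omega^2 u_2} \quad \text{if } p \equiv 1 \bmod 3, \qquad v = \frac{u_1 + \omega u_2}{u_1 + u_2} \quad \text{if } p \equiv 2 \bmod 3.
\]
Since $p \nmid a_0 a_3$, the element $a_0/a_3$ is a unit at $\frakp$, so by \eqref{eqn:Hilb-units-uniformiser} the symbol vanishes whenever $v$ is a unit at $\frakp$. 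The plan is to show by contradiction that $v$ is always a unit under the standing assumption $a_0/a_3 \notin \Qp^{\ast 3}$.

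Suppose instead that $v$ has nonzero $\frakp$-valuation, so that at least one of $u_1 + u_2$, $u_1 + \omega u_2$, $u_1 + \omega^2 u_2$ is non-unit at $\frakp$. Using the identity $u_1^3 + u_2^3 = (u_1 + u_2)(u_1 + \omega u_2)(u_1 + \omega^2 u_2)$ together with the defining relation $a_1(u_1^3 + u_2^3) = a_0 - a_3 u_3^3$, this forces $v_p(a_0 - a_3 u_3^3) \geq 1$. Two bookkeeping subcases arise. If $p \mid \gcd(u_1, u_2)$, the conclusion is immediate from the defining equation. If $p \nmid \gcd(u_1, u_2)$, then since $p \neq 3$ the three factors reduce to pairwise distinct non-zero linear forms in $u_1, u_2$ over the residue field of $\frakp$ (for $p \equiv 2 \bmod 3$ one moreover uses that $1, \omega$ are $\FF_p$-linearly independent in $\FF_{p^2}$, so only $u_1 + u_2$ can have positive valuation), hence at most one factor can be non-unit and the total valuation is inherited from that single factor.

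In either subcase one concludes $a_0 \equiv a_3 u_3^3 \bmod p$; since $p \nmid a_0$, this forces $p \nmid u_3$ and thus $a_0/a_3 \equiv u_3^3 \bmod p$. Hensel's lemma (applicable because $p \neq 3$) then yields $a_0/a_3 \in \Qp^{\ast 3}$, contradicting the standing assumption. Therefore $v$ is a unit at $\frakp$ and the Hilbert symbol vanishes, completing the proof. The argument is a direct case analysis with no serious obstacle; the only delicate point is ensuring that the naive rational representative $v$ is well-defined at the chosen integral point, but the assumption $a_0/a_3 \notin \Qp^{\ast 3}$ itself rules out the vanishing of any $u_1 + \omega^j u_2$ (since such vanishing would again imply $a_0/a_3$ is a cube by the same substitution).
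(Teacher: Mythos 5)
Your proposal is correct and follows the same route as the paper: dispose of the case $a_0/a_3\in\Qp^{\ast3}$ via Lemma~\ref{lem:cube}, then when $a_0/a_3\notin\Qp^{\ast3}$ use Lemma~\ref{lem:invp} plus \eqref{eqn:Hilb-units-uniformiser} after checking that the relevant factors $u_1+\omega^j u_2$ are $\frakp$-units. The paper argues this last point directly (from $a_0/a_3\notin\Zp^{\ast3}$, Hensel and $p\nmid a_0$ force $a_0-a_3u_3^3$ to be a unit, and the factorisation of the defining equation then makes each factor a unit), and observes that $p\equiv 2\bmod 3$ automatically lands in the cube case; your version reaches the same conclusion by contradiction with an extra $\gcd(u_1,u_2)$ subcase, so it is a little longer but not essentially different.
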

	
	\begin{proof}	
		The condition on $a_0, a_1, a_3$ implies that $a_0/a_3$ is a unit of $\Zp$. If $a_0/a_3 \in \Zp^{*3}$, then Lemma~\ref{lem:cube} implies that the local invariant map vanishes. Note that this assumption holds to any $p \equiv 2 \bmod 3$. Assume now that $p \equiv 1 \bmod 3$ and $a_0/a_3 \in \Zp^{*} \setminus \Zp^{*3}$. Since $p \nmid a_0a_3$ we have $a_0 - a_3u^3 \neq 0 \bmod p$ and hence none of $u_1 + \omega u_2$ and $u_1 + \omega^2 u_2$ vanishes mod $\frakp$ because $\sU$ extends to the following two integral models over $k_{\frakp}$
		\[
		\begin{split}
			a_1\(u_1 + \omega u_2\)\(u_1^2 - \omega u_1u_2 + \omega^2 u_2^2\) &= a_0 - a_3 u_3^3, \\
			a_1\(u_1 + \omega^2 u_2\)\(u_1^2 - \omega^2 u_1u_2 + \omega u_2^2\) &= a_0 - a_3 u_3^3.
		\end{split}
		\]
		This shows that both entries of $(1 + \sigma)(\sB)$ are units of $\sO_{\frakp}$ and proves our claim in view of Lemma~\ref{lem:invp} and \eqref{eqn:Hilb-units-uniformiser}.
	\end{proof}
	
	\begin{lemma}
		\label{lem:a_0/a_3 unit}
		If $p \equiv 2 \bmod 3$ and $a_0/a_3 \in \Zp^{*}$, then $\inv_p \sB' = 0$.
	\end{lemma}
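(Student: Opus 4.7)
The plan is to reduce Lemma \ref{lem:a_0/a_3 unit} to the already-established Lemma \ref{lem:cube} by showing that under the hypotheses, $a_0/a_3$ is automatically a cube in $\Qp^{*}$.

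First I would observe the elementary fact that whenever $p \equiv 2 \bmod 3$, the cube map on $\Fp^{*}$ is bijective. Indeed, $\gcd(3, p-1) = 1$, so the endomorphism $x \mapsto x^3$ of the cyclic group $\Fp^{*}$ is injective and therefore surjective. By Hensel's lemma, lifting from the residue field, this entails that the cube map on $\Zp^{*}$ is also bijective: given $u \in \Zp^{*}$, the polynomial $f(T) = T^3 - u$ has a simple root in $\Fp^{*}$ since its derivative $3T^2$ is a unit (because $p \neq 3$), and this simple root lifts uniquely to a cube root in $\Zp^{*}$.

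Consequently, since $a_0/a_3 \in \Zp^{*}$ by hypothesis, we obtain $a_0/a_3 \in \Zp^{*3} \subseteq \Qp^{*3}$. Lemma \ref{lem:cube} then immediately gives $\inv_p \sA' = 0$, finishing the proof.

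There is really no obstacle to speak of here; the statement is a direct corollary of Lemma \ref{lem:cube} together with the standard fact that units of $\Zp$ are cubes when $p \equiv 2 \bmod 3$ and $p \neq 3$. One could equivalently argue directly from Lemma \ref{lem:invp} and \eqref{eqn:Hilb-general}: since $p \equiv 2 \bmod 3$ the unique prime $\frakp$ over $p$ in $K$ has $\Qp \hookrightarrow K_{\frakp}$, so $a_0/a_3 \in K_{\frakp}^{*3}$ as well, and thus the cyclic algebra $\sA = (a_0/a_3, \cdot)_{\omega}$ is trivial in $\Br K_{\frakp}$. Either formulation yields the conclusion in a single step.
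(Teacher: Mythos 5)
Your proof is correct and takes essentially the same route as the paper: the paper simply asserts that for $p \equiv 2 \bmod 3$ every unit of $\Zp$ is a cube and then invokes Lemma~\ref{lem:cube}, while you supply the short justification (via $\gcd(3, p-1)=1$ and Hensel) that the paper leaves implicit.
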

	
	\begin{proof}
		The proof follows from the fact that $a_0/a_3$ is a unit of $\Zp$ while for $p \equiv 2 \bmod 3$ any unit of $\Zp$ is a cube. The claim then follows from Lemma~\ref{lem:cube}.
	\end{proof}
	
	\begin{lemma}
		\label{lem:inv3}
		Assume that $(a_0, a_1, a_3) \equiv (2, 8, 5) \bmod 9$. Then $\inv_3 \sB' = 2/3$.
	\end{lemma}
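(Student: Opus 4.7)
The plan is to directly compute the local invariant $\inv_3 \sA'$ at the ramified prime $\frakp \mid 3$ of $K = \QQ(\omega)$ using the Hilbert symbol formulas recalled in Section~\ref{sec: computing inv}. Since $(3) = \frakp^2$ in $\sO_K$, there is a unique prime above $3$, so compatibility of corestriction with local invariants yields
\[
	\inv_3 \sA' \;=\; \inv_\frakp \sA \;=\; \left( \tfrac{a_0}{a_3},\, \tfrac{u_1 + \omega u_2}{u_1 + u_2} \right)_{\omega, \frakp}
\]
evaluated at any $\ZZ_3$-point $(u_1, u_2, u_3) \in \sU(\ZZ_3)$. Thus the task reduces to showing this single Hilbert symbol equals $2/3$ independently of the chosen point.

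I would first perform a modulo-$9$ analysis of the defining equation $a_1 u_1^3 + a_1 u_2^3 + a_3 u_3^3 = a_0$. Cubes of units in $\ZZ_3$ are $\equiv \pm 1 \bmod 9$, while $u^3 \equiv 0 \bmod 9$ whenever $3 \mid u$; inspecting the resulting combinations shows that every $\ZZ_3$-point falls into one of exactly two classes modulo $3$, namely (a) $u_1 \equiv u_2 \equiv 1$, $u_3 \equiv 2 \bmod 3$, or (b) $u_1 \equiv u_2 \equiv 2$, $u_3 \equiv 0 \bmod 3$. In both cases $u_1 + u_2 \in \ZZ_3^\ast$ and the element $c := u_2/(u_1 + u_2) \in \ZZ_3$ satisfies $c \equiv 2 \bmod 3$.

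Next I would expand both arguments of the Hilbert symbol as $1$-units of $\sO_\frakp$ relative to the uniformiser $\lambda = 1 - \omega$ from \eqref{eqn: equations for lambda at the prime 3}. The identity $\omega - 1 = -\lambda$ yields
\[
	\frac{u_1 + \omega u_2}{u_1 + u_2} = 1 - c \lambda,
\]
so expanding $c \in \ZZ_3$ in $\lambda$-adic series via $3 = 2\lambda^2 - \lambda^3$ produces coefficients $b_1 \equiv -c \equiv 1$ and $b_2 \equiv 0 \bmod 3$. For the left entry, $a_0/a_3 \equiv 4 \bmod 9$ gives $a_0/a_3 - 1 \equiv 3 \equiv 2\lambda^2 - \lambda^3 \bmod \lambda^4$, so its coefficients are $a_1 = 0$ and $a_2 = 2$. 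Substituting into \eqref{eqn: hilbert symbol at 3 for two units} gives
\[
	(u_a, u_b)_{\omega, \frakp} = a_1 b_1 (a_1 - b_1) - a_1 b_2 + a_2 b_1 = 0 - 0 + 2 \cdot 1 \equiv 2 \bmod 3,
\]
which corresponds to $2/3 \in \tfrac{1}{3}\ZZ/\ZZ$ and establishes the lemma.

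The only delicate point is expanding elements of $\ZZ_3$ into $\lambda$-adic series via $3 = 2\lambda^2 - \lambda^3$, but fortunately the symbol-of-two-units formula \eqref{eqn: hilbert symbol at 3 for two units} depends only on the first two coefficients $a_1, a_2, b_1, b_2$ of the expansions, so the required computation is minimal and only the class of $c$ modulo $3$ enters the final answer.
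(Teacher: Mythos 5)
Your proof is correct and follows essentially the same approach as the paper: reduce to a direct Hilbert-symbol computation at the unique ramified prime $\frakp \mid 3$ of $K = \QQ(\omega)$ and apply \eqref{eqn: hilbert symbol at 3 for two units} after reading off the $\lambda$-adic expansions of both entries. Your handling of the second entry via the closed-form identity $\frac{u_1 + \omega u_2}{u_1 + u_2} = 1 - c\lambda$ with $c := u_2/(u_1 + u_2) \equiv 2 \bmod 3$ is a small but genuine streamlining of the paper's argument, which treats the two residue classes $(u_1, u_2) \equiv (1,1)$ and $(2,2) \bmod 3$ separately and needs a multiplication by $-1$ in the first case; the resulting coefficients agree modulo $3$ (your $a_2 = 2$ matches the paper's $b_2 = -1$, your $b_1 = 1$ matches the paper's $c_1 = 1$), giving the same value $2/3$.
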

	
	\begin{proof}
		Let $\frakp$ be the unique prime ideal above 3 in the ring of integers of $\QQ_3(\omega)$, it is generated by $\lambda = 1 - \omega$. Our proof rests upon Lemma~\ref{lem:invp} and \eqref{eqn: hilbert symbol at 3 for two units}, which confirm the claim of Lemma~\ref{lem:inv3} provided that
		\[
		\begin{split}
			\frac{a_0}{a_3} \bmod \ZZ_3^{*3} &\text{ expands as } 1 + b_2\lambda^2 + \dots, \\
			\frac{u_1 + \omega u_2}{u_1 + u_2} \bmod \ZZ_3^{*3} &\text{ expands as } 1 + c_1\lambda + \dots
		\end{split}
		\]
		and $b_2c_1 \equiv 2 \bmod 3$.
		
		Our assumptions imply that $a_0/a_3 \equiv 4 \bmod 9$ and thus it has expansion $4 = 1 + 3 = 1 - \lambda^2 - \lambda^3 + \dots$ according to \eqref{eqn: equations for lambda at the prime 3}. Hence $b_2 = -1$.
		
		The conditions on $a_0,a_1, a_3$ force any point in $\sU(\ZZ_3)$ to obey either $u_1^3 \equiv u_2^3 \equiv 1 \bmod 9$ or $u_1^3 \equiv u_2^3 \equiv -1 \bmod 9$. If $u_1^3 \equiv u_2^3 \equiv 1 \bmod 9$ or equivalently $u_1 \equiv u_2 \equiv 1 \bmod 3$, we have $u_1 + \omega u_2 = 1 + \omega + 3(k + m\omega)$ for some $k, m \in \ZZ$. Multiplying both $u_1 + \omega u_2$ and $u_1 + u_2$ by $-1$, which is a cube of $\ZZ_3^{*3}$, shows that $-u_1 - \omega u_2 \equiv 1 - 3 +\lambda \equiv 1 + \lambda \mod \lambda^2$ and $-u_1 - u_2 \equiv 1 \bmod \lambda^2$ by \eqref{eqn: equations for lambda at the prime 3}. Finally, multiplying $(u_1 + \omega u_2)/(u_1 + u_2)$ by $(u_1 + u_2)^3$ now confirms that for any choice of $\ZZ_3$-point with $u_1 \equiv u_2 \equiv 1 \bmod 3$ the coefficients $b_2, c_1$ above satisfy $b_2 = -1$ and $c_1 = 1$ whose product is congruent to 2 mod 3. The same analysis without the need of multiplying $u_1 + \omega u_2$ and $u_1 + u_2$ by $-1$ yields identical conclusion if $u_1^3 \equiv u_2^3 \equiv -1 \bmod 9$ and hence the claim.
	\end{proof}
	
	\begin{lemma}
		\label{lem:inv2}
		Assume that $(a_0, a_1, a_3) = (b, a, -2b)$ and $2 \nmid ab$. Then $\inv_2 \sB' \in \{0, 2/3\}$.
	\end{lemma}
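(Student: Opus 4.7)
The plan is to exploit that $a_0/a_3 = -1/2$ under the hypothesis and that $2$ is inert in $K = \QQ(\omega)$, so that everything happens at the unique prime $\frakp \mid 2$ with residue field $\FF_4$. Since $p = 2 \equiv 2 \bmod 3$, Lemma \ref{lem:invp} reduces the computation to $\inv_{\frakp} \sA$. As $-1 = (-1)^3$ is a cube in $K_\frakp$, one has $a_0/a_3 \equiv 2^{-1} \bmod K_\frakp^{*3}$, and the bilinearity relations \eqref{eqn:Hilb-general} then yield
\[
	\inv_{\frakp} \sA \;=\; -\inv_{\frakp}(2, y)_\omega \;=\; (y, 2)_{\omega, \frakp},
\]
where $y = (u_1 + \omega u_2)/(u_1 + u_2)$ is evaluated at the chosen point of $\sU(\ZZ_2)$.

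Next I would verify that $y$ is a $\frakp$-adic unit whose reduction mod $\frakp$ lies in $\{1, \omega\} \subseteq \FF_4^{*}$. Reducing the defining equation $a(u_1^3 + u_2^3) - 2bu_3^3 = b$ modulo $2$ and using $x^3 \equiv x \bmod 2$ forces $u_1 + u_2 \equiv 1 \bmod 2$, since $a$ and $b$ are odd. Hence exactly one of $u_1, u_2$ is odd, and a direct inspection in each of the two cases shows that both $u_1 + u_2$ and $u_1 + \omega u_2$ reduce to nonzero elements of $\FF_4$, with $y$ reducing to $1$ when $u_1$ is odd, and to $\omega$ when $u_2$ is odd.

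Finally I would invoke formula \eqref{eqn:Hilbert-unit-uniformiser-p=2(3)}: since $2$ is a uniformizer of $K_\frakp$ and $y$ is a unit with $y \equiv \omega^i \bmod \frakp$, one has $(y, 2)_{\omega, \frakp} \equiv -i \bmod 3$. The two parities thus give $i \in \{0, 1\}$, so $\inv_2 \sA' \in \{0, -1/3\} = \{0, 2/3\} \bmod \ZZ$, which is the claim.

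The only real subtlety is bookkeeping: the two sign flips from $(2^{-1}, y)_\omega = -(2, y)_\omega$ and $(2, y)_\omega = -(y, 2)_\omega$ combine to place the non-zero value at $2/3$ rather than at $1/3$, so one must keep careful track of signs and of the convention in \eqref{eqn:Hilbert-unit-uniformiser-p=2(3)}.
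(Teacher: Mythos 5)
Your proof is correct and follows the same strategy as the paper: reduce the equation mod $2$ to force exactly one of $u_1,u_2$ to be odd, compute the residue of $y=(u_1+\omega u_2)/(u_1+u_2)$ in $\FF_4^\ast$, and invoke \eqref{eqn:Hilbert-unit-uniformiser-p=2(3)}. The paper leaves implicit the bilinearity and antisymmetry manipulations that rewrite $(a_0/a_3,y)_\omega$ as $(y,2)_\omega$ after discarding the cube $-1$, whereas you spell them out explicitly — a useful expansion of the sign bookkeeping rather than a genuinely different route.
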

	
	\begin{proof}
		Once again we employ Lemma~\ref{lem:invp}, which states that $\inv_2 \sB' = \inv_{\frakp} \sB$, where $\frakp$ is the unique prime ideal above $2$ in $\sO_k$. Since $2 \nmid ab$, the reduction of $\sU \bmod 2$ becomes
		\[
		u_1^3 + u_2^3 \equiv 1 \bmod 2.
		\]
		It is clear that any $\ZZ_2$-point must obey $u_1 \equiv 0 \bmod 2$ or $u_2 \equiv 0 \bmod 2$ but $u_1, u_2$ do not vanish mod 2 simultaneously. If $u_2 \equiv 0 \bmod 2$ then $\inv_{\frakp} \sB = 0$ as the second entry of $\sB$ becomes 1 mod 2 and thus it is a cube of $\ZZ_2$. On the other hand, if $u_1 \equiv 0 \bmod 2$, then the second entry of $\sB$ is $\omega$ and hence $\inv_{\frakp} \sB = 2/3$ by \eqref{eqn:Hilbert-unit-uniformiser-p=2(3)} confirming our claim.
	\end{proof}
	
	\section{Lower bound}
	\label{sec:lower}
	Keep notation as in section~\ref{sec:3-coeff}. To prove the lower bound of Theorem~\ref{thm:N(B)} we shall establish an asymptotic formula for the number of members of the subfamily given by 
	\begin{equation}
		\label{eqn:U-lower-bound}
		U: \quad au_1^3 + au_2^3 - 2bu_3^3 = b,
	\end{equation}
	with positive coprime $a, b$ such that $a \equiv 17 \bmod 18$, $b \equiv 11 \bmod 18$ and if $p \mid ab$, then $p \equiv 5 \bmod 6$. Each member of this family has a Brauer--Manin obstruction to the integral Hasse principle. 
	
	\begin{remark}
		\label{rem:Q-point}
		The compactification $X$ of \eqref{eqn:U-lower-bound} has a rational point $(0:1:-1:0) \in X(\QQ)$. Therefore $X$ is unirational \cite[Thm.~1]{Kol02} and thus $U(\QQ) \neq \emptyset$.
	\end{remark}
	
	\subsection*{Local solubility}
	We claim that $\sU(\Zp) \neq \emptyset$ for all $p$. It is clear that $\sU(\RR) \neq \emptyset$. For $p \nmid ab$ such that $p \neq 3, 7$ this follows by setting $u_3 = 0$. Then the reduction of $\sU$ mod $p$ is an elliptic curve minus the divisor at $\infty$ which has a smooth $\Fp$-point by the Hasse-Weil bound and hence it has a $\Zp$-point by Hensel. If $p = 7$ either $a/b$ is a cube mod 7, in which case one may set $u_2 = u_3 = 0$, or $a/b \equiv \pm 2, \pm 3 \bmod 7$, where local solubility once more is easily verified by setting $u_3 = 1$ and looking at 
	\[
	\frac{a}{b}\(u_1^3 + u_2^3\) \equiv 3 \bmod 7.
	\]
	Finally, local solubility at $p = 3$ is implied by the congruence conditions on $a,b$ as setting $u_2 = -1$ and $u_3 = 0$ reduces the defining equation of $\sU$ mod 9 to $u_1^3 \equiv -1 \bmod 9$. A unit of $\ZZ_3$ is a cube if and only if it is congruent to $\pm 1$ mod 9 and hence the claim of local solubility at 3. If $p \mid ab$, then $p \equiv 2 \mod 3$ and thus any unit mod $p$ is a cube. As $(a, b) = 1$ and $2 \nmid ab$ this is sufficient to deduce local solubility at such primes. 
	
	\subsection*{Values of the local invariant map} 
	We will now show that each $\sU$ has a Brauer--Manin obstruction to the integral Hasse principle as the sum of local invariant maps of $\sB'$ is never 0. If $p = \infty$ the local invariant map of $\sB'$ vanishes by Lemma~\ref{lem:infinity}. The same holds to $p \nmid 2ab$ provided that $p \neq 3$ by Lemma~\ref{lem:p-not-dividing-a_0a_3}. It also holds to any $p \mid ab$. Indeed, $2 \nmid ab$ and thus $-2^{-1}$, which is the first entry of $\sB'$, is a unit modulo any prime $p \mid ab$. By assumption any such prime satisfies $p \equiv 2 \bmod 3$ and thus Lemma~\ref{lem:a_0/a_3 unit} is applicable. The local invariant map equals $2/3$ at $p = 3$ by Lemma~\ref{lem:inv3}. It remains to show that it does not equal $1/3$ at $p = 2$ which follows from Lemma~\ref{lem:inv2}. This confirms that each $\sU$ has a Brauer--Manin obstruction to the integral Hasse principle as the sum of local invariant maps of each adelic point is either 1/3 or 2/3.
	
	\subsection{Establishing the lower bound}
	To simplify what follows, let
	\[
	\rho(n) =
	\begin{cases}
		1 &\text{if } p \mid n \implies p \equiv 5 \bmod 6, \\
		0 &\text{otherwise}.
	\end{cases} 
	\]
	Let $S(B)$ denote the number of $a, b \le B$ as in \eqref{eqn:U-lower-bound}, that is
	\[
	S(B)
	= \sum_{\substack{a \le B \\ a \equiv 17 \bmod 18}} \rho(a) \quad
	\sum_{\substack{b \le B, \ (b, a) = 1 \\ b \equiv 11 \bmod 18}} \rho(b).
	\]
	We claim that there is a real constant $c > 0$, such that
	\begin{equation}
		\label{eqn:S(B)}
		S(B) = c \frac{B^2}{\log B} + O\(\frac{B^2}{(\log B)^{3/2}}\).
	\end{equation}
	
	The coprimality condition $(a, b) = 1$ can be encoded using its indicator function $\sum_{d \mid (a, b)} \mu(d)$. The orthogonality of Dirichlet's characters mod $18$ now shows that
	\[
	S(B) = 
	\sum_{\chi_1, \chi_2 \bmod 18} \frac{\chi_1(17) \chi_2(5)}{36}
	\sum_{d \le B} \mu(d) \rho(d)
	\prod_{i = 1}^2  \chi_i(d) T_{\chi_i}\(\frac{B}{d}\),
	\]
	where for a real $x \ge 1$ and a Dirichlet character $\chi$ mod 18 we have defined $T_{\chi}(x)$ by
	\[
	T_{\chi}(x) = \sum_{a \le x} \rho(a)\chi(a).
	\]
	
	We split the sum over $d$ into two separate sums, one for the range $1 \le d \le B^{1/2}$ and one for $B^{1/2} < d \le B$. If $B^{1/2} < d \le B$, we may apply the trivial bound $B/d + O(1)$ to $T_{\chi_i}(B/d)$ for $i = 1, 2$. Bounding trivially the remaining sums over $d$ then gives
	\[
	\begin{split}
		S(B) 
		= \sum_{\chi_1, \chi_2 \bmod 18} \frac{\chi_1(17) \chi_2(5)}{36}
		\sum_{d \le B^{1/2}} \mu(d) \rho(d)
		\prod_{i = 1}^2  \chi_i(d) T_{\chi_i}\(\frac{B}{d}\)
		+ O\(B^{3/2}\).
	\end{split}
	\]
	
	Let $\chi_0$ be the trivial Dirichlet character mod 18. We claim that there is a real constant $c' > 0$, such that
	\begin{equation}
		\label{eqn:T_chi}
		T_{\chi}(x) =
		\begin{cases}
			c' x(\log x)^{-1/2} + O\(x(\log x)^{-1}\) &\text{if } \chi = \chi_0, \\
			O\(x(\log x)^{-1}\)  &\text{if } \chi \neq \chi_0.
		\end{cases}
	\end{equation}
	Combining this with $(\log(B/d))^{-1} = (\log B)^{-1}\(1 + O\(\log d/\log B\)\)$ now implies that
	\[
	\begin{split}
		S(B)
		&= c' \frac{B^2}{\log B} \sum_{d \le B^{1/2}} \frac{\rho(d)\mu(d)}{d^2} + O\(\frac{B^2}{(\log B)^{3/2}}\) \\
		&= c \frac{B^2}{\log B} + O\(\frac{B^2}{(\log B)^{3/2}}\),
	\end{split}
	\]
	where we have extended the range of summation of $d$ to $\infty$, which introduces a non-zero convergent sum (e.g. $1/\zeta(2) \neq 0$) that contributes to the constant in the main term, while its tail only produces a negligible error term. 
	
	It remains to verify \eqref{eqn:T_chi} in order for \eqref{eqn:S(B)} to hold. This can be done with the Landau-Selberg-Delange method. Consider the Dirichlet series $F(s, \chi)$ of this sum. We employ the standard notation $s = \sigma + it$. If $\sigma > 1$, the function $F(s, \chi)$ can be written as an Euler product
	\[
	F(s, \chi)
	= \sum_{a = 1}^{\infty} \frac{\rho(a)\chi(a)}{a^s}
	= \prod_{p \equiv 5 \bmod 6} \(1 - \frac{\chi(p)}{p^s}\)^{-1}.
	\]
	Let $\psi(\cdot) = \chi(\cdot) \(\frac{\cdot}{3}\)$. The binomial series expansion shows that
	\[
	\begin{split}
		F(s, \chi)
		&= \prod_{p} \(1 - \frac{1}{2}\(1 -\(\frac{p}{3}\)\)\frac{\chi(p)}{p^s}\)^{-1} \\
		&= \prod_p\(1 - \frac{\chi(p)}{p^s}\)^{-1/2}\(1 - \frac{\psi(p)}{p^s}\)^{1/2} E_p(s) 
		= \frac{L(s, \chi)^{1/2}}{L(s, \psi)^{1/2}}E(s).
	\end{split}
	\]
	Here $E_p(s) = 1 + O(p^{-2\sigma})$ and $E(s) = \prod_p E_p(s)$. 
	
	Note that $|\rho(a)\chi(a)| \le \rho(a)$ and $L(s, \chi_0) = \zeta(s)(1 - 1/2^s)(1 - 1/3^s)$. It is now clear that $F(s, \chi)$ satisfies the hypothesis of \cite[Thm.~II.5.2]{Ten15} with $N = 0$, $w = 1/2$, and with $z = 1/2$ if $\chi = \chi_0$, $z = -1/2$ if $\chi = \chi_0 \(\frac{\cdot}{3}\)$ and $z = 0$, for the remaining characters mod 18. Indeed, this is verified by \cite[Thm.~11.3, p.~360]{MV07} and \cite[Thm.~11.4, p.~362]{MV07} as $L(s, \chi_0 \(\frac{\cdot}{3}\))$ and $L(s, \chi')$ have no Siegel zeroes (eg. LMFDB), where $\chi'$ is the extension mod 6 of the quadratic Dirichlet character $\(\frac{\cdot}{3}\)$ mod 3. This confirms \eqref{eqn:T_chi} and completes the proof of the lower bound in Theorem~\ref{thm:N(B)}.
	\qed
	
	\section{Upper bounds}
	\label{sec:upper}
	We begin with an estimate of the number of $U$ defined in \eqref{eqn:U-main} with a non-trivial transcendental Brauer group. Let 
	\[
	\begin{split}
		M_{a_0}^{\tr}(B)
		&= \# \left\{ (a_1, a_2, a_3) \in [-B, B]^3 \cap \ZZ_{\prim}^3 \ : \ \Br U / \Br_1 U \text{ non-trivial} \right\}, \\
		M^{\tr}(B)
		&= \# \left\{ (a_0, a_1, a_2, a_3) \in [-B, B]^4 \cap \ZZ_{\prim}^4 \ : \
		\begin{aligned}
			&(a_1, a_2, a_3) = 1, \\ 
			&\Br U / \Br_1 U \text{ non-trivial}
		\end{aligned}
		\right\}.
	\end{split}
	\]
	
	\begin{proposition}
		\label{prop:trans}
		The following hold 
		\[
		M^{\tr}_{a_0}(B) \ll B(\log B)^6 \quad \text{and} \quad
		M^{\tr}(B) \ll B^2(\log B)^6.
		\]
	\end{proposition}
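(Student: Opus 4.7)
The plan is to combine Theorem~\ref{thm: Brauer group thm} with a Selberg--Delange estimate and Möbius inversion to handle primitivity. By Theorem~\ref{thm: Brauer group thm}, extended to arbitrary non-zero integer coefficients via the isomorphism $u_i \mapsto u_i/c_i$ that reduces each $a_i$ to its cube-free part, the transcendental Brauer group $\Br U/\Br_1 U$ is non-trivial precisely when $a_1 a_2 a_3 \equiv 2 \bmod \QQ^{\ast 3}$, a condition depending only on $(a_1,a_2,a_3)$. Hence $M^{\tr}_{a_0}(B)$ is independent of $a_0$; and because $(a_1,a_2,a_3)=1$ already forces $(a_0,a_1,a_2,a_3)=1$ for every $a_0\neq 0$, we have $M^{\tr}(B)=(2B+O(1))\cdot M^{\tr}_{a_0}(B)$. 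It therefore suffices to prove $R(B) := M^{\tr}_{a_0}(B) \asymp B(\log B)^6$.

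Next I would parametrise each non-zero $a_i$ uniquely as $a_i = \epsilon_i d_i t_i^3$ with $\epsilon_i \in \{\pm 1\}$, $d_i \in \ZZ_{>0}$ cube-free and $t_i \ge 1$; since $-1 \in \QQ^{\ast 3}$, the cube-class condition becomes $d_1 d_2 d_3 = 2m^3$ for some $m \in \ZZ_{\ge 1}$, and for fixed cube-free $d_1,d_2$ the integer $d_3$ is uniquely determined as the cube-free positive element with $\nu_p(d_3) \equiv \delta_{p=2} - \nu_p(d_1) - \nu_p(d_2) \bmod 3$. The bound $|a_i| \le B$ forces $d_i \le B$ automatically. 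Writing $C(B)$ for the analogous count with the primitivity condition dropped, Möbius inversion on $g = \gcd(a_1,a_2,a_3)$ gives $R(B) = \sum_{g \ge 1} \mu(g) C(B/g)$, since the class of $a_1 a_2 a_3$ modulo cubes is invariant under rescaling by $g^3 \in \QQ^{\ast 3}$. Approximating $\prod_i \lfloor (B/d_i)^{1/3} \rfloor$ by $2^{-1/3} B/m$ reduces the main term of $C(B)$ to $8 \cdot 2^{-1/3} B \sum_m r(m)/m$, where $r(m)$ counts ordered cube-free factorisations $2m^3 = d_1 d_2 d_3$. The Dirichlet series $F(s) = \sum_m r(m)/m^s$ has Euler product with local factor $1 + 7 p^{-s} + p^{-2s}$ at odd primes and $3 + 6 \cdot 2^{-s}$ at $p = 2$; the expansion $(1 - p^{-s})^7(1 + 7 p^{-s} + p^{-2s}) = 1 + O(p^{-2s})$ yields $F(s) = \zeta(s)^7 G(s)$ with $G$ holomorphic and non-vanishing on $\{\repart s > 1/2\}$.

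A Selberg--Delange estimate \cite[\S II.5]{Ten15} then gives $\sum_{m \le X} r(m) = X P_0(\log X) + O(X(\log X)^{-K})$ for any $K > 0$, with $P_0$ a polynomial of degree $6$ and positive leading coefficient; partial summation upgrades this to $C(B) = B P_2(\log B) + O(B(\log B)^{-K})$ with $\deg P_2 = 7$. Expanding $P_2(\log(B/g))$ as a Taylor series around $\log B$ and invoking $\sum_g \mu(g) (\log g)^k/g = (-1)^k (1/\zeta)^{(k)}(1)$ for $k \ge 0$ (consequences of PNT-type estimates, together with $1/\zeta(1) = 0$ and $(1/\zeta)'(1) = 1$), the leading coefficient $c_7 (\log B)^7$ of $P_2$ contributes nothing at order $k = 0$ but produces the dominant $7 c_7 B (\log B)^6$ at $k = 1$, while all remaining contributions are $O(B(\log B)^5)$. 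This yields $R(B) \sim 7 c_7 B (\log B)^6$, hence $M^{\tr}_{a_0}(B) \asymp B (\log B)^6$ and $M^{\tr}(B) \asymp B^2 (\log B)^6$. The main technical obstacle is that the cruder Selberg--Delange error would be too coarse to survive summation over $g \le B$ without swallowing the main term; one must exploit the polynomial-in-$\log$ refinement together with quantitative estimates for $\sum_{g \le X} \mu(g)(\log g)^k/g$ to ensure that the spurious $(\log B)^7$ contribution genuinely cancels.
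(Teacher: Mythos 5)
Your approach is a genuinely different route from the paper's. The paper reduces the problem to counting primitive solutions of $a_1a_2a_3 = n^3$ via a ``doubling'' bijection (pass from $a_1a_2a_3 = 2n^3$ to $a_1a_2a_3 = n^3$ by halving the even coordinate, and back by multiplying one coordinate by $2$), then cites the known asymptotic from Heath-Brown--Moroz. You instead attempt a self-contained analytic proof via the Euler product, Selberg--Delange, and M\"obius inversion with cancellation of the leading power of $\log$. Several ingredients are correct: the Euler factors $1 + 7p^{-s} + p^{-2s}$ (odd $p$) and $3 + 6\cdot 2^{-s}$ (at $2$), the identification $F(s) = \zeta(s)^7 G(s)$, the reduction $M^{\tr}(B) \approx 2B\,M^{\tr}_{a_0}(B)$, and the cancellation device $\sum_g \mu(g)(\log g)^k/g = (-1)^k(1/\zeta)^{(k)}(1)$ with $1/\zeta(1) = 0$ killing the $k=0$ contribution. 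If carried through this would give a full asymptotic (not just $\asymp$), so your route is more ambitious.

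The gap lies in the step $C(B) = B\,P_2(\log B) + O(B(\log B)^{-K})$. After parametrising $a_i = \epsilon_i d_i t_i^3$, the precise identity is
\[
C(B) = 8 \sum_{\substack{d_1,d_2,d_3 \le B \text{ cube-free}\\ d_1d_2d_3 \in 2\QQ^{*3}}} \ \prod_{i=1}^3 \left\lfloor (B/d_i)^{1/3} \right\rfloor,
\]
and replacing this by $8\cdot 2^{-1/3} B\sum_m r(m)/m$ discards both the floor functions and the individual constraints $d_i \le B$ (the latter is not equivalent to a constraint on $m$ alone: a factorisation $2m^3 = d_1d_2d_3$ with $m$ of moderate size can still have one very lopsided $d_i > B$). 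Establishing a clean polynomial-in-$\log$ asymptotic for this three-variable constrained sum is exactly the heart of the Heath-Brown--Moroz paper the authors cite; partial summation in one variable does not produce it. As written the argument is therefore circular in the sense that the ``partial summation upgrades this'' step would need to re-derive the cited result. Similarly, the M\"obius sum $R(B) = \sum_g \mu(g)\,C(B/g)$ genuinely requires the refined asymptotic of $C$ with a power-of-$\log$ saving uniform in $g$, and the conditionally convergent series $\sum_g \mu(g)(\log g)^k/g$ must be handled with care (you note this, but a PNT-strength zero-free region must be invoked explicitly to make the truncation errors negligible after multiplying by $B$). In short: the structure of your calculation correctly explains \emph{why} the power of $\log$ drops from $7$ to $6$, but the hard technical content --- the uniform asymptotic for $C(X)$ including boundary effects --- is assumed rather than proved, and it is precisely this content that the paper obtains by citation.
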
 
	
	\begin{proof}
		It follows from Theorem~\ref{thm: Brauer group thm} that $U$ has a non-trivial transcendental Brauer group if and only if $a_1a_2a_3 \equiv 2 \bmod \QQ^{*3}$. It suffices to prove the claim for $M^{\tr}_{a_0}(B)$, as 
		\[
		M^{\tr}(B) 
		= M^{\tr}_{a_0}(B) \sum_{a_0 \le B} 1
		= BM^{\tr}_{a_0}(B) + O\(M^{\tr}_{a_0}(B)\).
		\]
		
		Note that the number of $a_1, a_2, a_3 \le B$ whose product $a_1a_2a_3 \in 2\QQ^{*3}$ is of the same magnitude as the number of those $a_1, a_2, a_3 \le B$ whose product is a cube. This follows, for example, from the fact that in the former problem at least one of $a_1, a_2, a_3$ has to be even. Thus any solution to $a_1a_2a_3 = 2n^3$ must come from a solution of $a_1a_2a_3 = n^3$ by multiplying one of the latter $a_i$ by 2. But each solution of $a_1a_2a_3 = n^3$ produces only finitely many solutions to $a_1a_2a_3 = 2n^3$ via doubling a coordinate and a permutation.
		
		The signs of the $a_i$ are immaterial for the order of magnitude of $M^{\tr}_{a_0}(B)$, as they only change the leading constant in the asymptotic. It thus suffices to count $0 < a_1, a_2, a_3 \le B$ with $\gcd(a_1, a_2, a_3) = 1$ and $a_1a_2a_3 \in \QQ^{*3}$. Thus 
		\[
		M^{\tr}_{a_0}(B) 
		\ll \sum_{\substack{a_1, a_2, a_3 \le B \\ (a_1, a_2, a_3) = 1, \ a_1a_2a_3 \in \QQ^{*3}}} 1
		\ll \sum_{\substack{a_1, a_2, a_3 \le B \\ (a_1, a_2, a_3) = 1}} 
		\sum_{\substack{n \le B \\ a_1a_2a_3 = n^3}}1.
		\] 
		The last quadruple sum has been investigated in the main result of \cite[p.~1]{HBM99}. It is asymptotically a constant times $B (\log B)^6$, thus proving our claim.
	\end{proof}
	
	We will also need the following two simple lemmas that will be applied at several instances in the study of the upper bounds considered here. 
	
\newlength{\mylength}%
\settowidth{\mylength}{$4(XYZ)^{1/2} + \ $}%

	\begin{lemma}
		\label{lem:triple-sum}
		Let $X \ge Y \ge Z \ge 1$ be real numbers. As $X, Y, Z \to \infty$, the following holds 
		\[
		\sum_{\substack{k \ell \le X, \ k m \le Y, \\ \ \ell m \le Z}} 1
		= \begin{cases}
			4(XYZ)^{1/2} + O(X\log (YZ/X) + X + Y \log Z) &\text{if } X < YZ, \\
			\makebox[\mylength][l]{}O(YZ) &\text{if } X \ge YZ.
		\end{cases}	
		\]
	\end{lemma}
	
	\begin{proof}
		Let $S$ denote the triple sum in the statement. Treating the sum over $m$ first gives
		\begin{equation}
			\label{eqn:S}
			S 
			= \sum_{k \ell \le X, \ k \le Y, \ \ell  \le Z} \(\min \left\{\frac{Y}{k},\frac{Z}{\ell}\right\} + O(1) \).
		\end{equation}
		Let $R$ be the sum corresponding to the error term above. Summing over one of $k, \ell$ first and then over the other gives
		\[
		R 
		=
		\begin{cases}
			X \log (YZ/X) + O(X) &\text{if } X < YZ, \\
			YZ + O(Y) &\text{if } X \ge YZ.
		\end{cases}
		\]
		
		We then consider separately the two contributions coming from the minimum, depending on if $Y/k < Z/\ell$ or vice versa. By doing so we get two identical sums $S_1$ and $S_2$ with arguments $1/k$ and $1/\ell$, respectively. We analyse $S_1$, the argument for $S_2$ being the same.
		\[
		\begin{split}
			S_1 
			&= Y\sum_{Y/Z < k \le Y} \frac{1}{k} \sum_{\ell \le \min \{kZ/Y, X/k\}} 1 
			= Y\sum_{Y/Z < k \le Y} \frac{1}{k}\(\min\left\{\frac{kZ}{Y}, \frac{X}{k} \right\} + O(1)\) \\
			&= Z\sum_{Y/Z < k \le \min\{Y, \sqrt{XY/Z\}}}1
			+ XY\sum_{\sqrt{XY/Z} < k \le Y}\frac{1}{k^2}
			+ O(Y \log Z),
		\end{split}
		\]
		as $\sum_{k \le x} 1/k = \log x + O(1)$. If $X \ge YZ$, the first sum is $YZ + O(Y)$, while the second sum is empty. On the other hand, if $X < YZ$, the asymptotic formula for the first sum on the last line is $(XYZ)^{1/2} + O(Y)$, while the second sum is convergent as $k \to \infty$. By completing the second sum we introduce an error term of size $O(X)$, while the completed sum equals $(XYZ)^{1/2} + O(Z)$. This altogether gives
		\[
		S_1 
		= \begin{cases}
			2(XYZ)^{1/2} + O(X) + O(Y \log Z) &\text{if } X < YZ, \\
			YZ + O(Y \log Z) &\text{if } X \ge YZ.
		\end{cases}
		\]
		
		If $Y/k \ge Z/\ell$ we may apply the same argument to $S_2$, yielding
		\[
		S_2 
		= \begin{cases}
			2(XYZ)^{1/2} + O(X) + O(Z \log Z) &\text{if } X < YZ, \\
			YZ + O(Y) + O(Z \log Z) &\text{if } X \ge YZ.
		\end{cases}
		\]
		This completes the proof, since $S = S_1 + S_2 + O(R)$ by \eqref{eqn:S}.
	\end{proof}
	
	The next two results are immediate corollaries of Lemma~\ref{lem:triple-sum}.
	
	\begin{corollary}
		\label{cor:triple-sum}
		Let $X, Y, Z \ge 1$ be real numbers. As $X, Y, Z \to \infty$, the following holds 
		\[
		\sum_{\substack{k \ell \le X, \ k m \le Y, \\ \ \ell m \le Z}} 1
		\ll (XYZ)^{1/2}.
		\]
	\end{corollary}
	
	\begin{lemma}
		\label{lem:sextupple-sum}
		Let $X, Y, Z, W \ge 1$ be real numbers tending to $\infty$. Then the following holds
		\[
		\begin{split}
			\sum_{\substack{k \ell m  \le X, \ kns \le Y \\ \ell nt \le Z, \ mst \le W}} 1 
			&\ll (XYZW)^{1/2}(\log (XYZW))^2.
		\end{split}
		\]
	\end{lemma}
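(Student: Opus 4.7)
The plan is to apply Lemma \ref{lem:triple-sum} to the inner sum over $(n,s,t)$ after fixing the triple $(k,\ell,m)$, and then to estimate the remaining outer sum by an elementary divisor-type bound. The structural observation behind this is that each of the six variables appears in exactly two of the four constraints; upon fixing $(k,\ell,m)$ the three remaining constraints become $ns \le Y/k$, $nt \le Z/\ell$, $st \le W/m$, which is precisely the shape of Lemma \ref{lem:triple-sum} for the triple $(n,s,t)$.

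Applying that lemma (and noting that the inner sum vanishes whenever any of $Y/k$, $Z/\ell$, $W/m$ is less than $1$, so that the hypothesis of Lemma \ref{lem:triple-sum} may be assumed) yields
\[
\sum_{\substack{ns \le Y/k,\ nt \le Z/\ell \\ st \le W/m}} 1 \ll \frac{(YZW)^{1/2}}{(k\ell m)^{1/2}}.
\]
Summing over $k\ell m \le X$ therefore reduces the entire count to establishing the divisor-type estimate
\[
\sum_{k\ell m \le X} \frac{1}{(k\ell m)^{1/2}} \ll X^{1/2}(\log X)^2.
\]
The latter I would obtain by peeling one variable at a time: the innermost sum satisfies $\sum_{m \le X/(k\ell)} m^{-1/2} \ll (X/(k\ell))^{1/2}$, which reduces the task to bounding $X^{1/2}\sum_{k\ell \le X}(k\ell)^{-1}$, and then $\sum_{k \le X} k^{-1}\log(X/k) \ll (\log X)^2$ delivers the stated bound.

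Combining these estimates gives a total of $(XYZW)^{1/2}(\log X)^2$, and since $Y, Z, W \ge 1$ we have $\log X \le \log(XYZW)$, so the claimed bound follows. I do not expect a serious technical obstacle: the key insight is that the variables-and-constraints system matches the edge-face incidence of a tetrahedron, so Lemma \ref{lem:triple-sum} disposes of three variables in one stroke, leaving only a routine three-fold divisor sum to control.
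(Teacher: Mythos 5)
Your proof is correct and follows essentially the same route as the paper: separate the sum over $(k,\ell,m)$, apply Lemma~\ref{lem:triple-sum} to the inner sum over $(n,s,t)$, and then bound $\sum_{k\ell m\le X}(k\ell m)^{-1/2}\ll X^{1/2}(\log X)^2$. The only cosmetic difference is that the paper recognizes this last sum as $\sum_{r\le X}\tau_3(r)r^{-1/2}$ and cites a standard divisor estimate plus partial summation, whereas you prove the bound directly by peeling off one variable at a time --- both are equally valid.
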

	
	\begin{proof}
		Write the sextuple sum in the statement as
		\[
		\sum_{k \ell m  \le X} 
		\sum_{ns \le Y/k, \ nt \le Z/\ell, \ st \le W/m} 1.
		\]
		Lemma~\ref{lem:sextupple-sum} is now a straightforward application of Lemma~\ref{lem:triple-sum} and $\sum_{r \le X} \tau_3(r)/r^{1/2} \ll X^{1/2} (\log X)^2$, which follows for example from \cite[II.12]{SMC95} combined with partial summation, where $\tau_3(n)$ denotes the 3-divisor function.
	\end{proof}
	
	\subsection{Establishing the upper bounds}
	We are now in position to prove the upper bounds in this article.  Propositions~\ref{prop:p divides a_0} and \ref{prop:p divides a_1 but} show that, under the arithmetic conditions given there, the Brauer--Manin set obtained from algebraic Brauer elements is non-empty but at the same time it is strictly smaller than the integral adelic set $\sU(\mathbb{A}_{\mathbb{Z}})$. In view of Remark~\ref{rem:constant-inv}, the local invariant map at $\infty$ of any Brauer element is constant, as every real number is a cube. Thus the projection to the finite adeles $\prod_{p\neq \infty} \mathcal{U}(\mathbb{Z}_{p})$ preserves strict inclusions. Any surface satisfying the arithmetic conditions of Propositions~\ref{prop:p divides a_0} and \ref{prop:p divides a_1 but} then fails integral strong approximation off $\infty$ and at the same time has no Brauer--Manin obstruction to the integral Hasse principle coming from the algebraic Brauer group. 
	
	Therefore, in view of Proposition~\ref{prop:trans} it suffices to count surfaces failing the arithmetic conditions on $a_0, a_1, a_2, a_3$ given in Propositions~\ref{prop:p divides a_0} and \ref{prop:p divides a_1 but}. This will give upper bounds for the number of surfaces in the family which have a Brauer--Manin obstruction to the integral Hasse principle or which satisfy strong approximation off $\infty$. We proceed with establishing the upper bounds in Theorems~\ref{thm:N(B)}, \ref{thm:N-a_0(B)} and \ref{thm:N-a_123(B)} in this order.
	
	\begin{proof}[Proof of Theorem~\ref{thm:N(B)}]
		Propositions~\ref{prop:p divides a_0} and \ref{prop:p divides a_1 but} imply that if the local invariant map of $\sB$ does not surject at a given big enough prime $p$, then $p$ divides at least two of the coefficients of $\sU$. We can indicate that by introducing separate variables which keep track of the different $p$-adic multiplicities of pairs and of triples of coefficients of $\sU$. Then
		\begin{equation}
			\label{eqn:a_i-factorisation}
			a_i = \pm r_i b_i^3 
			\(\prod_{\substack{(j, k) \in S_{2, i} \\ (\ell, m) \in T_2}} u_{jk, \ell m}^{\delta(j, k, \ell, m)}\) 
			\(\prod_{\substack{(j, k) \in S_{3, i} \\ (\ell, m, n) \in T_3}} v_{0jk, \ell m n}^{\varepsilon(j, k, \ell, m, n)}\), 
			\quad i = 0, 1, 2, 3,
		\end{equation}
		where
		\[
		\begin{split}
			S_{2, i} &= \{(j, k) \ : \ 0 \le j < k \le 3 \text{ and } i \in \{j, k\}\}, \\ 
			S_{3, i} &= \{(j, k) \ : \ 1 \le j < k \le 3 \text{ and } i \in \{0, j, k\}\}, \\
			T_2 &= \{(\ell, m) \ : \ 1 \le \ell, m \le 3 \} \setminus \{(3, 3)\}, \\ 
			T_3 &= \{(\ell, m, n) \ : \ 1 \le \ell, m, n \le 3 \} \setminus \{(3, 3, 3)\},
		\end{split}
		\]
		\[
		\delta(j, k, \ell, m) =
		\begin{cases}
			\ell &\text{if } i = j,\\
			m &\text{if } i = k,
		\end{cases} 
		\quad {and} \quad
		\varepsilon(j, k, \ell, m, n) =
		\begin{cases}
			\ell &\text{if } i = 0,\\
			m &\text{if } i = j, \\
			n &\text{if } i = k.
		\end{cases}
		\]
		Moreover, the variables on the right hand side of \eqref{eqn:a_i-factorisation} are positive integers satisfying
		\begin{itemize}
			\item if $p \mid r_i$, then $p = 3$ for $i = 0$ and $p < 17$ for $i = 1, 2, 3$;
			\item $(a_i/r_i, r_i) = 1$ for $i = 0, 1, 2, 3$; 
			\item all $u_{jk, \ell m}$ and $v_{0jk, \ell m n}$ are squarefree. 
		\end{itemize}
		
		The definition of $r_i$ implies that for any constant $\alpha > 0$, we must have
		\begin{equation}
			\label{eqn:sum-r-i}
			\begin{split}
				\sum_{r_i \le X} \frac{1}{r_i^\alpha}
				\ll 1, \quad i=0,1,2,3.
			\end{split}
		\end{equation}
		Indeed, let $\varepsilon(n)$ be the indicator function of the set $\{n \in \ZZ_{\ge 0} \ : \ p \mid n \implies p < 17\}$. This function is non-negative and multiplicative. Hence 
		\[
		\sum_{n \le X} \frac{\varepsilon(n)}{n^\alpha}
		\ll \sum_{n = 1}^{\infty} \frac{\varepsilon(n)}{n^\alpha}
		= \prod_{p < 17}\(1 - \frac{1}{p^\alpha}\)^{-1}
		\ll 1.
		\]
		
		The signs of $a_i$ are immaterial to the count. Assuming that all of them are positive only changes $N(B)$ or $N^{'}(B)$ by a constant. Thus
		\begin{equation}
			\label{eqn:N(B)-sumstar}
			N(B) 
			\ll M^{\tr}(B) + \sumstar_{a_0, a_1, a_2, a_3 \le B} 1
			\quad \text{and} \quad
			N^{'}(B)
			\ll \sumstar_{a_0, a_1, a_2, a_3 \le B} 1.
		\end{equation}
		Here the superscript $*$ means that the $a_i$ counted in the respective sum satisfy \eqref{eqn:a_i-factorisation}. Note that we have removed the condition $(a_1, a_2, a_3) = 1$. Since we are summing non-negative integers, by doing so we potentially only enlarge the sum. We are now in position to apply Lemma~\ref{lem:sextupple-sum} to the sextuple sum over $u_{ij, 11}$, which after bounding logarithms trivially gives
		\begin{equation}
			\label{eqn:sumstar}
			\begin{split}
				\sumstar_{\substack{a_i \le B \\ i = 0, 1, 2, 3}} 1
				\ll B^2 (\log B)^2 
				\sum_{\substack{r_i, b_i, u_{jk, \ell m}, \\ v_{0jk, \ell m n}}} r_i^{-\frac{1}{2}} b_i^{-\frac{3}{2}} 
				\(\prod_{\substack{(j, k) \in S_{2, i} \\ (\ell, m) \in T_2'}} u_{jk, \ell m}^{-\frac{\ell + m}{2}} \)
				\(\prod_{\substack{(j, k) \in S_{3, i} \\ (\ell, m, n) \in T_3}} v_{0jk, \ell m n}^{-\frac{\ell + m + n}{2}} \),
			\end{split}
		\end{equation}
		where $T_2' = T_2 \setminus \{(1,1)\}$ and the summation is taken inside the region
		\[
		r_i b_i^3 
		\(\prod_{\substack{(j, k) \in S_{2, i} \\ (\ell, m) \in T_2'}} u_{jk, \ell m}^{\delta(j, k, \ell, m)}\) 
		\(\prod_{\substack{(j, k) \in S_{3, i} \\ (\ell, m, n) \in T_3}} v_{0jk, \ell m n}^{\varepsilon(j, k, \ell, m, n)}\) \le B, 
		\quad i= 0, 1, 2, 3.
		\]
		
		As we are summing non-negative integers, if we expand the summation range to
		\[
		r_i \le B, \quad 
		b_i \le B^{1/3}, \quad 
		u_{jk, \ell m} \le B^{1/\max\{\ell, m\}}, \quad
		v_{0jk, \ell m n} \le B^{1/\max\{\ell, m, n\}}
		\]
		for all possible $i, j, k, \ell, m, n$,  we are only enlarging the sum on the right hand side of \eqref{eqn:sumstar}. However, each of the sums over $r_i, b_i,u_{jk, \ell m}, v_{0jk, \ell m n}$ produced by this can be evaluated separately, and each of them is convergent if we let its summation variable go to infinity. This is confirmed by \eqref{eqn:sum-r-i} and the bound $\sum_{n > X} n^{-\alpha} = X^{- \alpha + 1}/(\alpha - 1) + O(X^{-\alpha})$, that holds whenever $\alpha > 1$. Moreover, by completing these sums we only introduce error terms with power saving and therefore 
		\[
		\sumstar_{a_0, a_1, a_2, a_3 \le B} 1
		\ll B^2 (\log B)^2 .
		\]
		This together with \eqref{eqn:N(B)-sumstar} and Proposition~\ref{prop:trans} completes the proof of Theorem~\ref{thm:N(B)}.
	\end{proof}
	
	\begin{proof}[Proof of Theorem~\ref{thm:N-a_0(B)}]
		The proof is similar to the one of the upper bound in Theorem~\ref{thm:N(B)}. If $\sU$ is counted in $N_{a_0}(B)$ or in $N_{a_0}^{'}(B)$, then Propositions~\ref{prop:p divides a_0} and \ref{prop:p divides a_1 but} imply that 
		\[
		a_i = \pm r_i b_i^3 
		\prod_{\substack{(j, k) \in S_{2, i} \\ (\ell, m) \in T_2}} u_{jk, \ell m}^{\delta(j, k, \ell, m)}, 
		\quad i = 1, 2, 3,
		\]
		where
		\[
		\begin{split}
			S_{2, i} &= \{(j, k) \ : \ 1 \le j < k \le 3 \text{ and } i \in \{j, k\}\}, \\ 
			T_2 &= \{(\ell, m) \ : \ 1 \le \ell, m \le 3 \} \setminus \{(3, 3)\},
		\end{split}
		\]
		\[
		\delta(j, k, \ell, m) =
		\begin{cases}
			\ell &\text{if } i = j,\\
			m &\text{if } i = k,
		\end{cases} 
		\]
		and the variables on the right hand side are positive integers satisfying
		\begin{itemize}
			\item if $p \mid r_i$, then $p \mid  a_0$ or $p < 17$ for $i = 1, 2, 3$;
			\item $(a_i/r_i, r_i) = 1$ for $i = 1, 2, 3$; 
			\item all $u_{jk, \ell m}$ are squarefree. 
		\end{itemize}
		
		Once again recalling Proposition~\ref{prop:trans}, summing over $u_{12, 11}, u_{13, 11}, u_{23, 11}$ in view of Corollary~\ref{cor:triple-sum}, and ignoring signs and the coprimality condition gives 
		\[
		N_{a_0}(B), \ N_{a_0}^{'}(B)
		\ll B (\log B)^6
		+ B^{3/2} \sum_{r_i, b_i, u_{jk, \ell m}} r_i^{-\frac{1}{2}} b_i^{-\frac{3}{2}}
		\prod_{\substack{(j, k) \in S_{2, i} \\ (\ell, m) \in T_2'}} u_{jk, \ell m}^{-\frac{\ell + m}{2}},
		\]
		where $T_2' = T_2 \setminus \{(1,1)\}$ and the summation is taken inside the region
		\[
		r_i b_i^3 \prod_{\substack{(j, k) \in S_{2, i} \\ (\ell, m) \in T_2'}} u_{jk, \ell m}^{\delta(j, k, \ell, m)} \le B, \quad i= 1, 2, 3.
		\]
		Replacing the summation region by boxes as in the proof of Theorem~\ref{thm:N(B)} then gives
		\[
		N_{a_0}(B), \ N_{a_0}^{'}(B) 
		\ll_{a_0} B^{3/2},
		\]
		which verifies the upper bounds claimed in Theorem~\ref{thm:N-a_0(B)}.
	\end{proof}
	
	\begin{proof}[Proof of Theorem~\ref{thm:N-a_123(B)}]
		As $a_1a_2a_3 \nequiv 2 \bmod \QQ^{*3}$, the transcendental part of $\Br U$ is trivial by Theorem~\ref{thm: Brauer group thm}. Similarly to the other upper bounds, $N_{a_1, a_2, a_3}(B)$ and $N_{a_1, a_2, a_3}^{}(B)$ then only count those $\sU$ for which
		\[
		a_0 = \pm ub^3,
		\]
		where if $p \mid u$, then $p \mid 3a_1a_2a_3$. Summing over $b$ first and then over $u$, while taking into account the definition of $u$ and the analogue of \eqref{eqn:sum-r-i} for it, gives
		\[
		N_{a_1, a_2, a_3}(B), \ N_{a_1, a_2, a_3}^{'}(B)
		\ll B^{1/3}\sum_{u \le B} \frac{\mathbf{1}_{p \mid u \implies p \mid 3a_1a_2a_3}}{u^{1/3}}
		\ll_{a_1 a_2 a_3} B^{1/3}.
		\]
		This completes the proof of Theorem~\ref{thm:N-a_123(B)}.
	\end{proof}
	
	\section{Examples of Brauer--Manin obstructions}\label{sec: examples of IBMO}
	
	The family of all affine diagonal cubic surfaces does not have a uniform generator of the Brauer group \cite[Thm.~2]{U14}. Getting a sharp lower bound on how many surfaces in this family have a Brauer--Manin obstruction thus requires a different approach than the one in section~\ref{sec: computing inv}. To get around the issue of lacking a uniform generator, we showcase the results from section~\ref{sec: computing inv} by giving instances of Brauer--Manin obstructions to the integral Hasse principle and to strong approximation without the need of having explicit representatives of Brauer elements. However, applying the method outlined here to the counting results is challenging as it requires controlling  uniformly the parameters $\epsilon$ and $\eta$, introduced in section~\ref{sec: computing inv}.
	
	\subsection{An interesting family}
	
	\begin{definition}
		We define $\mathcal X_{\ell,p,q} \subseteq \PP^3_{\mathbb Z}$ by
		\[
		x_{0}^3+\ell x_1^3 + pqx_{2}^3+q\ell^2x_{3}^3=0,
		\]
		where
		\begin{itemize}
			\item $\ell$, $p$ and $q$ are distinct primes,
			\item $q \equiv 8 \mod 9$,
			\item $\{p+9\ZZ,\ell+9\ZZ\} = \{2 + 9\ZZ, 5+9\ZZ\}$.
		\end{itemize}
		We define $\mathcal U_{\ell,p,q} \subseteq \PP^3_{\mathbb Z}$ as the complement of the curve $x_1=0$. We will write $S=\{3,\ell,p, q\}$ for the primes of bad reduction on $\mathcal{X}_{\ell,p,q}$.
	\end{definition}
	
	In accordance with the notation of section \ref{subsec: invariant map for generic families}, we will consistently write $\lambda = \ell$, $\mu = pq$ and $\nu = \ell/p$. Furthermore, recall that $\beta^3 = \lambda\nu = \ell^2/p$.
	
	Note that $\mathcal{X}_{\ell,p,q}$ is $k_v$-rational for $v \in \{3,\ell, p\}$, or equivalently, there is a cross ratio of the four coefficients which is a cube in $k_v$. Indeed, $\ell \cdot q \ell^2/pq \in \mathbb Z_\ell$ and $\ell \cdot pq/q\ell^2 \in \mathbb Z_q$ are cubes, since their valuations are multiples of $3$ and $\ell \equiv q \equiv 2 \mod 3$. Rationality at $3$ follows from Remark~\ref{rem:constant-inv} as $(q\ell^2 \cdot pq)/\ell \equiv p\ell \equiv 1 \mod 9$.
	
	The following two results show that the surfaces $\mathcal U_{\ell,p,q}$ are everywhere locally soluble, there is no obstruction to the integral Hasse principal for $\mathcal U_{\ell,p,q}$, but they always have an obstruction to integral strong approximation off $\infty$. Moreover, $\mathcal{U}_{\ell,p,q}$ should have a rational point (see Remark \ref{remark: CT conj implies that Ulpq has a rational point}), but there may be an integral Brauer--Manin obstruction to the integral Hasse principle on a different choice of integral model for $U_{\ell,p,q}$.
	
	\begin{lemma}\label{lem:loc sol for lpq}
		The affine cubic $\mathcal U_{\ell,p,q}$ is everywhere integrally locally soluble.
	\end{lemma}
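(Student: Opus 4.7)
The plan is to work in the affine chart $\{x_1 = 1\}$, where $\mathcal{U}_{\ell,p,q}(\mathbb{Z}_v)$ is identified with the set of $(x_0,x_2,x_3) \in \mathbb{Z}_v^3$ satisfying
\[
    f(x_0,x_2,x_3) := x_0^3 + \ell + pq\,x_2^3 + q\ell^2\,x_3^3 = 0.
\]
For $v = \infty$ the real point $(-\sqrt[3]{\ell},0,0)$ works. For each finite $v$, I will produce an $\mathbb{F}_v$-solution at which some partial derivative of $f$ is a $v$-adic unit and lift by Hensel's lemma. Since the partials are $3x_0^2$, $3pq\,x_2^2$, and $3q\ell^2\,x_3^2$, for any $v \neq 3$ it suffices to find an $\mathbb{F}_v$-point with a nonzero coordinate, and the $v = 3$ case will be the one demanding real work.

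For $v \in \{p,q\}$, the congruences $p \in \{2,5\} \bmod 9$ and $q \equiv 8 \bmod 9$ imply $p, q \equiv 2 \bmod 3$, so cubing is a bijection on $\mathbb{F}_v^\ast$; setting $x_2 = x_3 = 0$ yields the solvable $x_0^3 = -\ell$, and Hensel lifts. For $v = \ell$, setting $x_2 = 1$, $x_3 = 0$ reduces to $x_0^3 = -\ell - pq$, whose reduction modulo $\ell$ is $x_0^3 \equiv -pq \bmod \ell$; again $\ell \equiv 2 \bmod 3$ makes this solvable, and Hensel lifts. For primes $v \notin S \cup \{\infty\}$ of good reduction with $v \equiv 2 \bmod 3$, the same $x_0^3 = -\ell$ trick applies. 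For $v \equiv 1 \bmod 3$, forcing $v \geq 7$ (since $7 \notin S$), the Weil bound $|\mathcal{X}_v(\mathbb{F}_v)| \geq v^2 - 3v + 1$ on the smooth cubic surface combined with $|(\mathcal{X}_v \cap \{x_1=0\})(\mathbb{F}_v)| \leq v + 1 + 2\sqrt{v}$ for the smooth plane cubic boundary produces an affine $\mathbb{F}_v$-point; since $-\ell \neq 0$ in $\mathbb{F}_v$, at least one of $x_0,x_2,x_3$ is nonzero, yielding a unit partial and a Hensel lift.

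The main obstacle is $v = 3$: every partial of $f$ lies in $3\mathbb{Z}_3$, so naive Hensel fails. The plan is to exploit the precise congruence hypotheses to reduce the question to a purely arithmetic assertion about cubes in $\mathbb{Z}_3^\ast$. Setting $x_2 = 1$ and $x_3 = -1$ reduces the equation to $x_0^3 = u$ with
\[
    u := -\ell - pq + q\ell^2 \equiv -\ell + p - \ell^2 \pmod{9},
\]
using $q \equiv -1 \bmod 9$. A direct case check on the two possibilities $(\ell,p) \equiv (2,5)$ and $(5,2) \bmod 9$ will give $u \equiv -1 \bmod 9$ in both. Since the structure $\mathbb{Z}_3^\ast \cong \{\pm 1\} \times (1 + 3\mathbb{Z}_3)$ together with the identity $(1 + 3\mathbb{Z}_3)^3 = 1 + 9\mathbb{Z}_3$ shows that a unit in $\mathbb{Z}_3^\ast$ is a cube precisely when it is congruent to $\pm 1 \bmod 9$, it will follow that $u$ is a cube in $\mathbb{Z}_3^\ast$, yielding the required $\mathbb{Z}_3$-point and completing the verification.
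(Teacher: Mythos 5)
Your proof is correct and follows essentially the same route as the paper's: dispatch $\infty$ by a real cube root, use that cubing is a bijection on $\mathbb{F}_v^\ast$ for $v \equiv 2 \bmod 3$ (including $\ell,p,q$), apply the Weil bounds for the cubic surface and the boundary genus-one curve at the remaining good-reduction primes, and settle $v=3$ by a case check modulo $9$ followed by the criterion that a $3$-adic unit is a cube iff it is $\equiv \pm 1 \bmod 9$. The only differences are cosmetic — a slightly weaker (but still sufficient) constant in the Weil bound, and a different but equivalent choice of test point mod $9$.
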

	
	\begin{proof}
		Let $\mathcal{X}:=\mathcal{X}_{\ell,p,q}$ and $\mathcal{U}:= \mathcal{X}\setminus \mathcal{C}$ where $\mathcal{C}$ is the curve given by $x_{1}=0$ on $\mathcal{X}$. Local solubility at the archimedean place is immediate. For $v \not \in \{3,\ell,p,q\}$ $\mathcal X$, $\mathcal U$ and $\mathcal C$ have good reduction. The Hasse--Weil bound for genus $1$ curves and Weil's Theorem \cite[Thm.~27.1]{Man86}, for smooth cubic surfaces gives
		\[
		\#\mathcal X(\mathbb F_v) \geq q_v^2-2q_v+1 \quad \text{ and } \quad \#\mathcal C(\mathbb F_v) \leq q_v+1 + 2\sqrt{q_v},
		\]
		where $q_v := \#\mathbb F_v$. We find $\mathcal U(\mathbb F_v) >0$ for $q_v \geq 5$ and by Hensel's lemma we can lift such an $\mathbb F_v$-point to an $\mathbb Z_v$-point. If $q_v \equiv 2 \mod 3$ we have that any unit is a cube. This implies for $v \ne \ell$ we have $(-\sqrt[3]\ell,1,0,0) \in \mathcal U(\mathbb Z_v)$. For $v = \ell$ it shows we can lift $(\sqrt[3]{pq},0,-1,0) \in \mathcal U(\mathbb F_\ell)$ to a $\mathbb Z_\ell$-point.
		
		We are left with $v=3$. We have either $p\equiv 2 \mod 9$ and $\ell \equiv 5 \mod 9$, or $p\equiv 5 \mod 9$ and $\ell \equiv 2 \mod 9$, in both cases we can lift the point $(1,-1,-1,1) \in \mathcal U(\mathbb Z/9\mathbb Z)$ to a $\mathbb Z_3$-point.
	\end{proof}
	
	\begin{proposition}\label{prop:invariant maps for general family}
		We have $\Br U_{\ell,p,q}/\Br \QQ \cong \ZZ/3\ZZ$. Moreover, for a generator $\mathcal{B}'$ as in section \ref{subsec: invariant map for generic families} the following holds
		\[
		\inv_{v} \mathcal{B}' \colon \begin{cases}
			U_{\ell,p,q}(\QQ_v) \to \frac13\ZZ/\ZZ \text{ is constant} & \text{if } v \ne p,\\
			\mathcal{U}_{\ell,p,q}(\ZZ_v) \to \frac13\ZZ/\ZZ \text{ is surjective} & \text{if } v = p.
		\end{cases}
		\]
	\end{proposition}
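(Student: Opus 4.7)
My plan is to establish the proposition in three stages: compute the Brauer group, show constancy of the invariant at all places away from $p$, and prove surjectivity at $p$.

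\textbf{Stage 1 (Brauer group).} I would identify $\Br U_{\ell,p,q}/\Br \QQ$ using the results of Section~\ref{sec: Brauer group}. Since $a_1 a_2 a_3 = \ell^3 p q^2 \equiv p q^2 \pmod{\QQ^{\ast 3}}$, and since $pq^2 \nequiv 2 \pmod{\QQ^{\ast 3}}$ (if $p=2$ the congruence would force $q^2 \in \QQ^{\ast 3}$, impossible for the prime $q$; if $p \neq 2$ the $2$-adic valuations do not match modulo $3$), Theorem~\ref{thm: Brauer group thm} yields $\Br U = \Br X$. The three cross ratios $1/(pq^2\ell)$, $p/\ell^3 \equiv p$, and $\ell/p$ each contain a prime to a non-cubic power, so none lies in $\QQ^{\ast 3}$; Proposition~\ref{prop: algebaric Brauer group for affine is the same as projective} then gives $\Br U/\Br \QQ \cong \ZZ/3\ZZ$, generated by some class $\sA'$.

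\textbf{Stage 2 (Constancy at $v \neq p$).} For each such $v$, I would apply Remark~\ref{rem:constant-inv} after exhibiting a cross ratio of the coefficients that is a cube in $\QQ_v^\ast$, ensuring $X_{\QQ_v}$ is $\QQ_v$-rational. The case $v = \infty$ is trivial since $\RR^{\ast 3} = \RR^\ast$, and for primes of good reduction Remark~\ref{rem:good epsilon}(i) applies. At $v = 3$, the hypotheses $q \equiv -1 \pmod 9$ and $p\ell \equiv 10 \equiv 1 \pmod 9$ give $pq^2\ell \equiv 1 \pmod 9$, a cube in $\ZZ_3^\ast$. At $v \in \{\ell, q\}$ the condition $\ell, q \equiv 2 \pmod 3$ makes every unit of $\ZZ_\ell^\ast$ (resp.\ $\ZZ_q^\ast$) a cube, so the cross ratios $p/\ell^3$ and $\ell/p$ reduce to units locally and are cubes. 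Consequently $\inv_v \sA'$ is constant on $U(\QQ_v)$ for all $v \neq p$.

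\textbf{Stage 3 (Surjectivity at $v = p$).} At $v = p$ none of the cross ratios is a cube in $\QQ_p^\ast$ (each has $p$-adic valuation $\pm 1 \bmod 3$), so $X_{\QQ_p}$ is not $\QQ_p$-rational. Using Table~\ref{table:inv maps}: since $\lambda = \ell \in \QQ_p^{\ast 3}$ (as $p \equiv 2 \pmod 3$) while $\nu$, $\lambda/\nu$, and $\lambda\nu$ are not, we are in the row with decomposition group $\langle s \rangle$, giving
\[
\inv_p \sA' \,=\, (f(P)/\eta,\,\nu)_{\omega, w},
\]
where $w$ is the unique place of $k = \QQ(\omega)$ above $p$, $\alpha = \sqrt[3]{\ell} \in \ZZ_p$, $f = (x_0 + \alpha\omega x_1)/(x_0 + \alpha\omega^2 x_1)$, and $\nu = \ell/p$. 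Bilinearity of the Hilbert symbol together with $\ell \in k_w^{\ast 3}$ reduces this to $-(f(P)/\eta,\,p)_{\omega, w}$, which by \eqref{eqn:Hilb-units-uniformiser} detects the class of $f(P)/\eta$ in $\FF_{p^2}^\ast/\FF_{p^2}^{\ast 3} \cong \ZZ/3\ZZ$, the residue field of the unramified extension $k_w/\QQ_p$.

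The hard part will be pinning down $\eta \in K_1^\ast$ locally at $p$: none of the conditions (i)--(iv) of Remark~\ref{rem:good epsilon} is satisfied at $p$, so one cannot take $\eta = 1$. I would construct $\epsilon \in K^\ast$ satisfying $\Norm_{K/L_1}(\epsilon) = -\mu = -pq$ explicitly, exploiting the factorisations $\alpha^3 = \ell$ and $\gamma^3 = \ell/p$ in $K = k(\alpha, \gamma)$, and then recover $\eta$ from \eqref{eq:norm equation for epsilon}. With $\eta$ in hand locally, evaluating $f$ at a few carefully chosen $\ZZ_p$-points of $\mathcal{U}_{\ell,p,q}$--beginning with $(-\alpha, 1, 0, 0)$, for which one computes $f = -\omega$ directly, and perturbing via Hensel's lemma to points with $x_2$ or $x_3$ nonzero--then verifying that $f(P)/\eta$ realises all three residue classes in $\FF_{p^2}^\ast/\FF_{p^2}^{\ast 3}$ will yield surjectivity of $\inv_p \sA'$ onto $\frac{1}{3}\ZZ/\ZZ$.
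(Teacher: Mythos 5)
Your Stages 1 and 2 are sound and broadly match the paper. One small slip in Stage 1: with the boundary $\{x_1=0\}$, the dehomogenised model is $u_0^3 + pq\,u_2^3 + q\ell^2 u_3^3 = -\ell$, so the cubic-form coefficients have product $pq\cdot q\ell^2 = pq^2\ell^2$, not $\ell^3 pq^2$; you may have folded in the extra $\ell$ from $a_0$. Since $pq^2\ell^2$ involves two distinct primes to non-cubic exponents it is still $\nequiv 2 \bmod \QQ^{*3}$, so the conclusion of Stage 1 survives the slip.

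The genuine gap is in Stage 3. The paper deals with surjectivity at $v=p$ in one line by invoking Proposition~\ref{prop:p divides a_0}: it reduces $\inv_p\sA'$ to the reduction map $\mathcal U(\ZZ_p)\to E(\FF_p)$ onto the elliptic curve that is the projectivised tangent cone of $\mathcal X \bmod p$, followed by a surjective homomorphism $E(\FF_p)\to\tfrac13\ZZ/\ZZ$. That argument never needs Table~\ref{table:inv maps}, never needs $\epsilon$ or $\eta$, and gives surjectivity essentially for free. You instead set up the explicit Hilbert-symbol calculation from \S\ref{subsec: invariant map for generic families}: you correctly identify the decomposition group $\langle s\rangle$ at $p$, correctly reduce $(f(P)/\eta,\nu)_{\omega,w}$ to $-(f(P)/\eta,p)_{\omega,w}$ using $\ell\in k_w^{*3}$, and correctly compute $f=-\omega$ at the base point $(-\alpha,1,0,0)$. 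But you then stop at exactly the point where the work begins: you say one would "pin down $\eta$ locally," then "perturb via Hensel" and "verify that $f(P)/\eta$ realises all three residue classes." None of this is done, and it is not automatic — there is no a priori reason the function $f(P)/\eta$ sweeps out all of $\FF_{p^2}^{*}/\FF_{p^2}^{*3}$ as $P$ varies over $\mathcal U_{\ell,p,q}(\ZZ_p)$, and controlling $\eta$ at $p$ is precisely the obstacle the paper sidesteps (note that none of conditions (i)--(iv) of Remark~\ref{rem:good epsilon} holds at $p$, which is why even the paper's later explicit computations in \S\ref{sec: examples of IBMO} restrict to $(\ell,p)=(2,5)$ and invoke $\mathrm{Cl}(\QQ(\beta))=\{0\}$ to get a good $\epsilon$). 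As written, Stage 3 is a proof sketch with the hard step deferred, whereas the paper's short route (Propositions~\ref{prop:p divides a_0} and \ref{prop:p divides a_1 but}) replaces that hard step by a point count on $E(\FF_p)$. You should either adopt the elliptic-curve reduction argument for $v=p$, or actually carry out the $\eta$-computation and the image analysis you outline.
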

	
	\begin{proof}
		The first statement follows from Proposition~\ref{prop: algebaric Brauer group for affine is the same as projective}(2), as the transcendental Brauer group is trivial by Theorem~\ref{thm: Brauer group thm}.
		
		Using Proposition~\ref{prop:p divides a_0} we see that
		\[
		\inv_{v}\mathcal{B}'\colon \mathcal U_{\ell,p,q}(\mathbb Z_p) \to \frac13\mathbb Z/\mathbb Z
		\]
		is surjective. This proves the claim for $v=p$.
		
		For the remaining primes $v$ we have that $v \not \in S$ is a prime of good reduction or $\mathcal{X}_{\ell,p,q}$ is $k_{v}$-rational. In those cases the invariant map is constant, see for example \cite[Lem.~5]{CTKS87}.
	\end{proof}
	
	\begin{remark}\label{remark: CT conj implies that Ulpq has a rational point}
		For $X_{\ell,p,q}:=\mathcal{X}_{\ell,p,q}\times \mathbb{Q}$ we have $X_{\ell,p,q}(\mathbb A_\QQ)^{\textup{Br}} \ne \emptyset$ \cite[\S 5, Prop.~2]{CTKS87} and under a conjecture by Colliot-Th\'el\`ene \cite[p. 174]{CT03} we have $X_{\ell,p,q}(\QQ) \ne \emptyset$. As $X_{\ell,p,q}$ is a smooth cubic surface $X_{\ell,p,q}(\mathbb{Q})$ is dense in $X_{\ell,p,q}$, hence the affine surface $U_{\ell,p,q}:=\mathcal{U}_{\ell,p,q}\times \mathbb{Q}$ always has $U_{\ell,p,q}(\mathbb{Q})\neq \emptyset$. For example, specialising $(\ell,p,q)=(2,5,17)$ we can find the rational point $(-1/2,1,1/2,-1/2)\in U_{2,5,17}(\mathbb{Q})$. Moreover, we see in Proposition \ref{prop: family of integral Brauer--Manin obstructions} that a different choice of integral model for $U_{2,5,17}$ has an integral Brauer--Manin obstruction.
	\end{remark}
	
	The exact value of $\inv_v \mathcal{B}'$ for $v \ne p$ depends on the choice of normalisation of $\mathcal{B}'$. To simplify the computation of the Brauer--Manin obstruction we make the following convenient choice by taking an $\epsilon \in \QQ(\beta)$ satisfying \eqref{eq:norm equation for epsilon} where $\epsilon$ a priori only lies in $\QQ(\omega,\alpha,\gamma)$.
	
	\begin{lemma}\label{lem:epsilon in Qbeta}
		There exists an $\epsilon \in \QQ(\beta)$ such that $\Norm_{\QQ(\beta)/\QQ} \epsilon = \mu$.
	\end{lemma}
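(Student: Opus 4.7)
The plan is to apply the Hasse norm principle to the cyclic cubic extension $k(\beta)/k$ and then descend to $\QQ(\beta)/\QQ$ via transitivity of the norm. Since $\ell\equiv 2\bmod 3$, the prime $\ell$ is inert in $k$, so $v_{\mathfrak l}(\ell^2/p)=2$ shows $\ell^2/p\notin k^{\ast 3}$; hence $k(\beta)=k(\sqrt[3]{\ell^2/p})$ is a cyclic extension of degree $3$, and the Hasse norm principle for $\mu=pq$ reduces to a local verification at each place of $k$.

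I would perform these local verifications using the Hilbert symbol formulas of \S\ref{sec: computing inv}. At the complex archimedean place, at the prime above $q$ (since $q\equiv 2\bmod 3$ forces $\ell^2/p$ to be a cube in $\FF_{q^2}^{\ast}$), and at the unique prime $\mathfrak{p}_3\mid 3$ (where the hypotheses on $\ell,p$ mod $9$ give $p\ell\equiv 1\bmod 9$, hence $\ell^2/p\equiv -1\bmod 9$, which is a cube in $\QQ_3^{\ast}$), the local extension $k(\beta)_w/k_w$ splits and the norm is surjective. At any other unramified place the condition $v_w(\mu)=0$ is automatic. The only ramified primes are $\mathfrak l\mid\ell$ and $\mathfrak p\mid p$, where by bilinearity
\[
(pq,\ell^2/p)_{\omega, w}\;=\;2(pq,\ell)_{\omega, w}-(pq,p)_{\omega, w}.
\]
Each symbol $(u,\pi)_{\omega, w}$ with $u$ a unit and $\pi$ the uniformiser is computed by (\ref{eqn:Hilbert-unit-uniformiser-p=2(3)}); since $\ell,p\equiv 2\bmod 3$ we have $(\ell-1)\mid(\ell^2-1)/3$ and $(p-1)\mid(p^2-1)/3$, so by Fermat $u^{(\ell^2-1)/3}\equiv 1$ (resp.\ $u^{(p^2-1)/3}\equiv 1$) modulo the maximal ideal for every $u$ coprime to $w$. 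This kills the symbols $(p,\ell)$, $(q,\ell)$ and $(q,p)$ at the relevant primes, while $(p,p)_{\omega,\mathfrak p}=(p,-1)_{\omega,\mathfrak p}=0$ follows from the standard identity $(a,a)_\omega=(a,-1)_\omega$ combined with $-1=(-1)^3$. Thus $(pq,\ell^2/p)_{\omega, w}=0$ at both $\mathfrak l$ and $\mathfrak p$, and by Hasse $\mu\in\Norm_{k(\beta)/k}(k(\beta)^{\ast})$.

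To descend, pick $\tilde\epsilon\in k(\beta)^{\ast}$ with $\Norm_{k(\beta)/k}(\tilde\epsilon)=\mu$ and set $\epsilon_0:=\Norm_{k(\beta)/\QQ(\beta)}(\tilde\epsilon)\in\QQ(\beta)^{\ast}$, using that $k(\beta)/\QQ(\beta)$ is Galois quadratic. Transitivity of the norm yields
\[
\Norm_{\QQ(\beta)/\QQ}(\epsilon_0)\;=\;\Norm_{k/\QQ}\bigl(\Norm_{k(\beta)/k}(\tilde\epsilon)\bigr)\;=\;\Norm_{k/\QQ}(\mu)\;=\;\mu^2.
\]
Since $\mu^3=\Norm_{\QQ(\beta)/\QQ}(\mu)$ is trivially a norm, the element $\epsilon:=\mu/\epsilon_0$ satisfies $\Norm_{\QQ(\beta)/\QQ}(\epsilon)=\mu^3/\mu^2=\mu$, as required.

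The main obstacle is the local Hilbert symbol bookkeeping at the ramified primes $\mathfrak l$ and $\mathfrak p$; its vanishing is a direct consequence of the congruence conditions imposed on $\ell,p,q$ mod $9$, and once this is in hand the descent to $\QQ(\beta)$ is purely formal.
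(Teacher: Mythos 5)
Your proposal is correct, and its core idea — reduce everything to a local norm verification from the cyclic cubic extension $k(\beta)/k$ via the Hasse norm principle, then descend to $\QQ(\beta)/\QQ$ — is the same as the paper's. The execution differs in two ways worth noting. First, the paper delegates the local verifications to the case list in Remark~\ref{rem:good epsilon} (cases (i)--(iv), imported from \cite[Prop.~4]{CTKS87}) and finesses the last place $w_3$ by the product formula, whereas you compute the Hilbert symbol $(pq,\ell^2/p)_{\omega,w}$ directly at each place: splitting at the complex place, at $w_q$ (using that any element of $\FF_q^*$ becomes a cube in $\FF_{q^2}^*$ because $3\mid q+1$), and at $w_3$ (via $\ell^2/p\equiv -1\bmod 9$), unramified triviality at the remaining good places, and bilinearity plus Fermat at the ramified places $w_\ell,w_p$ (together with the Steinberg identity $(p,p)_\omega=(p,-1)_\omega=0$). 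Your direct treatment of $w_3$ is in fact a little tidier than invoking reciprocity. Second, you make the descent step explicit: applying Hasse to $k(\beta)/k$ gives $\tilde\epsilon$, taking $\Norm_{k(\beta)/\QQ(\beta)}$ lands you in $\QQ(\beta)^*$ with norm $\mu^2$, and dividing $\mu$ by this element produces the desired $\epsilon$ since $\Norm_{\QQ(\beta)/\QQ}(\mu)=\mu^3$. The paper compresses this equivalence into the formulation of Remark~\ref{rem:good epsilon} and does not spell it out, so your version is more self-contained. One cosmetic remark: the paper's claim that Case (ii) of Remark~\ref{rem:good epsilon} applies at $w_\ell$ is off (there $v_{w_\ell}(\nu)=1$, so $\nu$ is not a cube); Case (iii), i.e.\ $\lambda/\nu=p$ a cube, is what holds. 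Your explicit symbol computation sidesteps this labelling issue entirely.
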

	
	\begin{proof}
		By Remark~\ref{rem:good epsilon} we need to show that $\mu \in k$ is a local norm at all place $w \in \Omega_k$. Let $w_3$, $w_\ell$, $w_p$, $w_q$ be the unique places of $k$ above the indicated rational primes. Case (i) of Remark~\ref{rem:good epsilon} deals with the places $w \not \in \{w_3,w_\ell, w_p,w_q\}$. For the places $w \in \{w_\ell, w_q\}$ we use Case (ii) of the same remark. For the place $w=w_p$ we note that $\ell,q \in k_w$ are cubes, and $\ell^2/p = \beta^3$ is a norm. Hence $\mu = pq$ is also a norm at $w$. By the reciprocity law it follows that $\mu$ is also a local norm at $w = w_3$.
	\end{proof}
	
	To obtain a counterexample to the Hasse principle, we would want $\inv_p \mathcal{B}' \colon \mathcal U_{\ell, p, q}(\mathbb Z_p)\rightarrow \tfrac{1}{3}\mathbb{Z}/\mathbb{Z}$ to no longer be surjective. We will show that it at least assumes a single value on large subset of these points.

	\begin{proposition}\label{prop:inv at p}
		For a point $P_{p} = [x_{0}:x_{1}:x_{2}:x_{3}]\in \mathcal X_{\ell,p,q}(\mathbb{Z}_{p})$ such that $x_{0}\equiv 0 \mod p$ and $x_{1}  \not\equiv 0 \mod p$, we have
		\[
		\inv_{p}\mathcal{B}'(P_{p})=
		\begin{cases} 2/3 & \text{if } p \equiv 2 \mod 9;\\
			1/3 & \text{if } p \equiv 5 \mod 9.
		\end{cases}
		\]
	\end{proposition}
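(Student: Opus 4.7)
The plan is to unwind the formula in Table~\ref{table:inv maps} applied at $v=p$, leveraging the congruences of $\ell,p,q$ to evaluate the resulting Hilbert symbol. Because $p\equiv 2\pmod 3$, the rational prime $p$ remains inert in $k=\mathbb Q(\omega)$; writing $w$ for the unique place of $k$ above $p$, I have $\inv_p\mathcal A'=\inv_w\mathcal A$, so the problem reduces to computing a single Hilbert symbol over $k_w$.

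First I would identify the decomposition group $G^v$ at $v=p$. Since $p\equiv 2\pmod 3$, every unit of $\ZZ_p$ is a cube, so $\lambda=\ell\in \mathbb Q_p^{*3}$; on the other hand, $v_w(\nu)=v_w(\ell/p)=-1$, so $\nu\notin k_w^{*3}$. This forces $G^v=\langle s\rangle$ and places us in the row of Table~\ref{table:inv maps} with
\[
\inv_w\mathcal A(P_p)=\bigl(f(P_p)/\eta,\,\nu\bigr)_{\omega,w}.
\]
To eliminate $\eta$, I would invoke the preceding lemma to pick $\epsilon\in\mathbb Q(\beta)\subseteq K_2^*$ with $\Norm_{\mathbb Q(\beta)/\mathbb Q}\epsilon=\mu$; flipping sign to match \eqref{eq:norm equation for epsilon}, Remark~\ref{rem:good epsilon} then permits the choice $\eta=1$.

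Next I would evaluate $f$ at $P_p$ modulo the uniformiser $p$ of $k_w$. Setting $y=x_0/(\alpha x_1)\in\mathcal O_w$ (well defined since $\alpha\in\ZZ_p^*$) with $v_w(y)\ge 1$, factoring $\alpha x_1$ out of numerator and denominator gives
\[
f(P_p)=\frac{\omega+y}{\omega^2+y}=\omega^{-1}\cdot\frac{1+y\omega^{-1}}{1+y\omega^{-2}},
\]
so $f(P_p)=\omega^{-1}u$ for some $1$-unit $u\in\mathcal O_w$. Since $p$ is a uniformiser of $k_w$, writing $\nu=\ell\cdot p^{-1}$ and using bilinearity of the Hilbert symbol yields
\[
\inv_w\mathcal A(P_p)=(f(P_p),\ell)_{\omega,w}-(f(P_p),p)_{\omega,w},
\]
and the first term vanishes by \eqref{eqn:Hilb-units-uniformiser}, both of its entries being units in $\mathcal O_w$.

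Finally I would apply \eqref{eqn:Hilbert-unit-uniformiser-p=2(3)} to the remaining symbol. The $1$-unit $u$ contributes trivially modulo $p$, so $f(P_p)^{(p^2-1)/3}\equiv\omega^{-(p^2-1)/3}\pmod p$; a direct check shows $(p^2-1)/3\equiv 1\pmod 3$ when $p\equiv 2\pmod 9$ and $(p^2-1)/3\equiv 2\pmod 3$ when $p\equiv 5\pmod 9$, giving the exponents $i=2$ and $i=1$ respectively. Feeding these into \eqref{eqn:Hilbert-unit-uniformiser-p=2(3)} and negating in accordance with the identity above produces the asserted values $\inv_p\mathcal A'(P_p)=2/3$ and $1/3$. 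The most delicate point of the argument is the reduction to $\eta=1$: tracking the sign of $\epsilon$ and verifying it lies in $K_2^*L_2^*$ is what Remark~\ref{rem:good epsilon} handles, after which the rest is a mechanical manipulation of Hilbert symbols at $p$.
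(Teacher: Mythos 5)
Your proposal is correct and follows essentially the same route as the paper: identify $G^v=\langle s\rangle$, normalise so $\eta=1$ via the preceding lemma and Remark~\ref{rem:good epsilon}, reduce $f(P_p)$ to $\omega^{-1}$ modulo $1$-units, and evaluate the Hilbert symbol with the $p\equiv 2\bmod 3$ formula \eqref{eqn:Hilbert-unit-uniformiser-p=2(3)}. The paper's own proof is terser (it silently absorbs $\eta=1$ and compresses the bilinearity step by replacing $\ell/p$ directly with $1/p$, landing on $(\omega,p)_{\omega,w_p}=-\tfrac{p^2-1}{9}$), but the mathematical content and the intermediate conclusions are identical to yours.
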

	
	\begin{proof}
		Let $w_p$ be the unique prime of $k$ above $p$. As $\nu = \ell/p$ and $\lambda/\nu = p$ are not cubes in $\mathbb{Q}_{p}$ but $\lambda = \ell \in \mathbb{Q}_p^{\ast 3}$, we fall into the case $G^{\langle p \rangle}=\langle s\rangle$ from Table \ref{table:inv maps}. Hence, we can compute $\inv_{p}\mathcal{B}'(P_{p})=\inv_{w_p} \mathcal B(P_p)$ as
		\[
		\left(\frac{x_{0}+\alpha\omega x_{1}}{x_{0}+\alpha\omega^2 x_{1}},\ell/p\right)_{\omega,w_p}
		=  \left(\frac{\alpha\omega }{\alpha\omega^2 },1/p\right)_{\omega,w_p}=(\omega,p)_{\omega,w_p}=-\frac{p^2-1}{9} \in \frac13\ZZ/\ZZ. \qedhere
		\]
	\end{proof}
	
	This shows that $\sum_v \inv_v \mathcal B'$ is constant on $\mathcal U'(\mathbb A_\mathbb Z)$ where $\mathcal U'$ is given by
	\[
	\mathcal U' \colon\quad p^3u_1^3+pqu_2^3 + q\ell^2u_3^3 = \ell.
	\]
	To determine whether there is a Brauer--Manin obstruction, that is $\sum_v \inv_v \mathcal B' \neq 0$, we will need to determine the constant value of $\inv_v \mathcal B'$ for all $v$ in the set of bad primes for $X$ and $\mathcal B'$ which we define by
	\[
	S_{\mathcal B'}:=\{v\colon v\mid w \text{ and } w(\epsilon)\ne 0\} \cup S.
	\]
	By Remark \ref{rem:good epsilon} we have $\inv_v \mathcal B'$ is identically $0$ for all other primes.
	
	In the next section we will construct a family where we have $S_{\mathcal B'}=S$, and hence $\inv_v \mathcal B'$ is identically zero for $v \not \in \{3,\ell,p,q\}$.
	
	\subsection{A family of counterexamples to the Hasse principle}
	
	The places where $\epsilon$ is not integral might still show up in the Brauer--Manin obstruction. However, in some cases we can ensure that there are no such primes. Let us restrict to $\ell=2$ and $p=5$, although many other families could be considered. Hence, we now consider the cubic surface
	\[
	\mathcal X = \mathcal X_{2,5,q}\colon x_{0}^3+2x_{1}^3+5qx_{2}^3+4q x_{3}^3=0,
	\]
	where $q$ is a prime that is $q\equiv 8 \mod 9$. Using the notation from subsection~\ref{subsec: invariant map for generic families} we have $\lambda = 2, \mu= 5q, \nu = 2/5 \text{ and } \beta^3 = 4/5$. It follows from Lemma \ref{lem:loc sol for lpq} that $\mathcal X(\mathbb{A}_{\mathbb{Z}})\neq \emptyset$.
	
	\begin{lemma}\label{lemma: epsilon for 2,5,q}
		For the surface $\mathcal U_{2,5,q}$, there exists an $\epsilon \in \mathcal{O}_{\mathbb Q(\beta)}$ for which $\Norm_{\mathbb{Q}(\beta)/\mathbb Q}(\epsilon)=-\mu$. In particular, such an $\epsilon$ is a unit away from $5$ and $q$.
	\end{lemma}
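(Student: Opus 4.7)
The plan is to make $\QQ(\beta)$ explicit first. Since $\beta^3 = \lambda \nu = \ell^2/p = 4/5$, setting $\theta := \sqrt[3]{10}$ gives $\beta = \theta^2/5$, so $\QQ(\beta) = \QQ(\theta) =: K$, and the task reduces to producing $\epsilon \in \sO_K$ with $\Norm_{K/\QQ}(\epsilon) = -5q$. I would then rely on the classical fact that $\QQ(\sqrt[3]{10})$ has class number one. This is verifiable by hand via the Minkowski bound, which is approximately $5$, together with explicit principal generators for the primes of small norm: $(\theta - 2)$ is the unique prime above $2$ (of norm $2$), and $((1+\theta+\theta^2)/3)$ is the unique prime above $3$ (of norm $3$), where the latter lies in $\sO_K \setminus \ZZ[\theta]$ and satisfies the monic integer polynomial $x^3 - x^2 - 3x - 3$.

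Granting class number one, the strategy is to construct an ideal of $\sO_K$ of norm $5q$ and take a generator. Since $5 \mid \theta^3$, the prime $5$ is totally ramified in $K$, and the direct computation
\[
    \Norm_{K/\QQ}(5 + 2\theta + \theta^2) = 5^3 + 10 \cdot 2^3 + 100 - 30 \cdot 5 \cdot 2 = 5
\]
exhibits it as the principal prime $\mathfrak p_5 = (5 + 2\theta + \theta^2)$. For $q$, the hypothesis $q \equiv 8 \pmod 9$ forces $q \equiv 2 \pmod 3$, so cubing is a bijection on $\Fq^{\times}$ and there exists $a \in \ZZ$ with $a^3 \equiv 10 \pmod q$; then $\mathfrak q := (q, \theta - a) \subset \sO_K$ is a prime of norm $q$, the factorisation in $\sO_K$ agreeing with the obvious one in $\ZZ[\theta]$ because $q$ is coprime to the index $[\sO_K : \ZZ[\theta]] = 3$. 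Class number one now produces $\mathfrak p_5 \mathfrak q = (\epsilon_0)$ for some $\epsilon_0 \in \sO_K$ with $|\Norm_{K/\QQ}(\epsilon_0)| = 5q$, and replacing $\epsilon_0$ by $-\epsilon_0$ if needed (using $\Norm_{K/\QQ}(-1) = -1$) delivers $\epsilon \in \sO_K$ with $\Norm_{K/\QQ}(\epsilon) = -5q = -\mu$. The \emph{in particular} claim is then automatic, since every prime ideal of $\sO_K$ dividing $(\epsilon)$ must divide $(\Norm(\epsilon)) = (5q)$.

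The main obstacle is being careful with the ring of integers: Dedekind's criterion fails at $3$ (as $x^3 - 10 \equiv (x-1)^3 \pmod 3$), so $\ZZ[\theta]$ is not maximal there and the element $(1+\theta+\theta^2)/3$ must be adjoined before the class-number verification makes sense. Once the maximal order is identified the rest of the argument is elementary and entirely uniform in $q$, relying only on the residue class $q \equiv 8 \pmod 9$ (and in fact only on $q \equiv 2 \pmod 3$ for the splitting behaviour of $(q)$ used above).
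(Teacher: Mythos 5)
Your proof is correct and follows the same strategy as the paper's: identify $\mathbb{Q}(\beta) = \mathbb{Q}(\sqrt[3]{10})$, invoke that its class number is one, exhibit degree-one primes $\mathfrak{p}_5$ and $\mathfrak{p}_q$ above $5$ and $q$, take a generator of $\mathfrak{p}_5\mathfrak{p}_q$, and fix the sign. The paper states $\mathrm{Cl}(M_2)=\{0\}$ without justification and does not verify the existence of degree-one primes above $5$ and $q$, whereas you supply all of this (Minkowski bound, the prime above $3$ lying outside $\mathbb{Z}[\theta]$, the explicit element $5+2\theta+\theta^2$ of norm $5$, and the role of $q\equiv 2\bmod 3$), so your write-up is a more complete version of the same argument.
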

	
	\begin{proof}
		Let us write $M_2=\mathbb{Q}(\beta)$. In $\mathcal{O}_{M_2}$ we can see that both $(5)$ and $(q)$ are divisible by prime ideals $\mathfrak p_5$ and $\mathfrak p_q$ of inertia degree $1$ as $q \equiv 2 \mod 3$. Since $\text{Cl}(M_2) = \{0\}$ we see that the ideal $\mathfrak p_5\mathfrak p_q$ is generated by an element $\epsilon' \in \mathcal O_{M_2}$. By definition of $\mathfrak p_5$ and $\mathfrak p_q$ we see that $
		\text{Norm}_{M_2/\mathbb Q}\left((\epsilon')\right)=(5q)$. Hence $\text{Norm}(\epsilon')=5qu$ for some unit $u \in \mathbb Z^{\ast}$. So hence either $\epsilon'$ or $-\epsilon'$ is the $\epsilon$ we are looking for.
	\end{proof}

	Using Lemma~\ref{lemma: epsilon for 2,5,q} we can easily compute the invariant maps at almost all places.

	\begin{proposition}\label{prop: example 2 places 2,3,q}
		We have that the invariant maps for $\mathcal B'$ on $\mathcal U_{2,5,q}$ at the places $v \ne p$ are constant and satisfy
		\[
		\inv_v \mathcal B' =
		\begin{cases}
			0 & \text{ if } v \ne 3, p;\\
			1/3 & \text{ if } v =3.
		\end{cases}
		\]
	\end{proposition}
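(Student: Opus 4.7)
The plan is to verify Proposition~\ref{prop: example 2 places 2,3,q} place-by-place, using Proposition~\ref{prop:invariant maps for general family} for the constancy of $\inv_v \mathcal{A}'$ at all $v \neq p = 5$ and identifying the constant values through local analysis. Throughout I fix an integral element $\epsilon \in \mathcal{O}_{\mathbb{Q}(\beta)} \subset K_2^{\ast}$ with $\Norm_{\mathbb{Q}(\beta)/\mathbb{Q}}(\epsilon) = -\mu$, as furnished by Lemma~\ref{lemma: epsilon for 2,5,q}. This $\epsilon$ is a unit away from the rational primes $5$ and $q$, lies in $K_2$, and therefore permits the choice $\eta = 1$, bringing all the cases of Remark~\ref{rem:good epsilon} into play.

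The first step is to dispatch the places $v \neq 3, 5$. At the archimedean place, $\mathcal{A}'$ has order $3$ in the $2$-torsion group $\Br \mathbb{R}$, so $\inv_\infty \mathcal{A}' = 0$ automatically. For every finite $v \notin \{2, 3, 5, q\}$, each $w \mid v$ in $k$ is a place of good reduction for $X_k$, whence Remark~\ref{rem:good epsilon}(i) yields $\inv_w \mathcal{A} = 0$ and hence $\inv_v \mathcal{A}' = 0$. At $v = 2$ (inert in $k$), the cross-ratio $\lambda/\nu = 5$ is a unit in $\mathbb{Z}_2$ and hence a cube in $k_{w_2}^{\ast}$, so Remark~\ref{rem:good epsilon}(iii) delivers the vanishing. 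At $v = q$ (also inert, since $q \equiv 2 \pmod 3$), every unit of $\mathbb{Z}_q$ is a cube, so $\nu = 2/5 \in k_{w_q}^{\ast 3}$ and Remark~\ref{rem:good epsilon}(ii) applies.

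The crux is the direct computation of $\inv_3 \mathcal{A}'$. Using the relations in \eqref{eqn: equations for lambda at the prime 3}, one checks that $\lambda\nu = 4/5 \equiv -1 \pmod 9$ is a cube in $\mathbb{Q}_3$, whereas none of $\lambda$, $\nu$, $\lambda/\nu$ is, so we fall in the $G^v = \langle q \rangle$ row of Table~\ref{table:inv maps}. With $\eta = 1$, the task reduces to evaluating
\[
	\inv_3 \mathcal{A}' \;=\; \inv_{w_3} \mathcal{A}(P_3) \;=\; \left( \frac{{}^{r}\epsilon}{\epsilon \cdot h(P_3)},\; 2 \right)_{\omega, w_3}
\]
at the unique prime $w_3$ above $3$ in $k$. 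The approach is to lift a suitable mod-$27$ solution, for instance $[4:0:1:-1] \in \mathcal{X}(\mathbb{Z}/27\mathbb{Z})$ (which has $v_3(\partial_{x_0} f) = 1$, so Hensel applies), to a point $P_3 \in \mathcal{X}(\mathbb{Z}_3)$; then $h(P_3) = (1 - \beta\omega)/(1 - \beta)$ is a unit in $\mathcal{O}_{k(\beta)_{w_3}}$ since $\beta \equiv 5 \pmod 9$ under the embedding $\mathbb{Q}(\beta) \hookrightarrow \mathbb{Q}_3$. Expanding the argument of the Hilbert symbol $\pi$-adically with $\pi = 1-\omega$ via \eqref{eqn: equations for lambda at the prime 3}, the value is then produced by \eqref{eqn: hilbert symbol at 3 for two units} and \eqref{eqn: hilbert symbol at 3 for uniformiser and unit}, yielding $1/3$.

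The principal technical obstacle is controlling ${}^r\epsilon/\epsilon$ uniformly in $q$. Although $\epsilon$ depends on $q$ through the choice of generator of $\mathfrak{p}_5\mathfrak{p}_q$ in $\mathcal{O}_{\mathbb{Q}(\beta)}$, only its class modulo an appropriate power of $\pi$ and modulo cubes enters the Hilbert symbol. The plan is to show that this class is pinned down by the congruence $q \equiv 8 \pmod 9$ together with the global norm condition, so that the final value $1/3$ emerges independently of the specific admissible $q$.
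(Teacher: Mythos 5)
Your proposal correctly identifies the overall structure — constancy away from $p$ via Proposition~\ref{prop:invariant maps for general family}, reduction to $S_{\mathcal A'} = \{3,2,5,q\}$ after choosing an integral $\epsilon \in \mathcal O_{\mathbb{Q}(\beta)}$, and the decomposition-group cases — but the arguments at the bad primes have genuine gaps.

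At $v = 2$ and $v = q$ you invoke parts (ii) and (iii) of Remark~\ref{rem:good epsilon} to ``deliver the vanishing''. This is a misreading of that remark: parts (i)--(iv) there are conditions under which $\mu$ is a \emph{local norm from $k(\beta)_{w'}$}, which is used to guarantee that $\mu$ is a global norm and hence that a good $\epsilon$ exists. The only vanishing conclusion in the remark is restricted to places of \emph{good reduction}. Since $2$ and $q$ are primes of bad reduction for $X$, the remark says nothing about $\inv_v \mathcal A'$ there. What the paper actually does is read the formulas off Table~\ref{table:inv maps} directly: at $v = 2$ the decomposition group is $\langle r\rangle$ and the formula $({}^q\epsilon/\epsilon\cdot\eta,\nu)_{\omega,w_2}$ collapses to $(1,\nu)=0$ because $\epsilon\in K_2$ is $q$-invariant and $\eta=1$; at $v=q$ the decomposition group is $\langle t\rangle$, for which the Table gives the value $0$ outright. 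Your conclusions are right, but the cited justification does not support them.

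At $v = 3$, the sample point $[4:0:1:-1]$ modulo $27$ only lies on $\mathcal X_{2,5,q}$ when $q\equiv 17 \bmod 27$ (one checks $64 + q \equiv 10 + q \bmod 27$, which vanishes only in that residue class), so it cannot be used uniformly over the family $q\equiv 8 \bmod 9$. The paper instead evaluates at the exact point $[-\sqrt[3]{q}:0:1:-1]\in X(\mathbb Q_3)$, which exists for all admissible $q$ and makes $h$ depend on $\beta$ alone. Finally, you rightly flag the $q$-dependence of $({}^r\epsilon/\epsilon, 2)_{\omega,w_3}$ as the principal obstacle, but you leave it unresolved, sketching a plan rather than a proof; the paper's argument at $3$ depends on the claim that this term vanishes (justified in the text by $\epsilon$ being a $2$-unit in $\mathbb Q(\beta)$), after which the computation reduces to $-(h(P_3),2)_{\omega,w_3} = -2/3 = 1/3$. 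As written, your proposal does not close either the choice-of-point issue or the $\epsilon$-uniformity issue, so the $v=3$ case is incomplete.
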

	
	\begin{proof}
		We already saw in Proposition~\ref{prop:invariant maps for general family} that all invariant maps apart from $v=p$ are constant.
		
		For the primes of good reduction $v \not \in S$ we refer to \cite[p. 31]{CTKS87}. For the three remaining primes we compute the invariant map at a single point using Table \ref{table:inv maps}.
		
		\noindent \underline{$v =3$}\\
		Note that $\lambda,\nu, \lambda/\nu$ are not cubes in $\mathbb{Q}_{3}$ but $\lambda\nu = 4/5$ is a cube in $\mathbb{Q}_{3}$, thus we fall into the case $G^{\langle 3 \rangle}= \langle q \rangle$. Since we can take $\epsilon \in \mathbb Q(\beta)$ by Lemma~\ref{lem:epsilon in Qbeta} we have $\eta=1$ by Remark~\ref{rem:good epsilon}. Futhermore, as $r$ fixes $\epsilon$ we have for all $P_{3}\in X(\mathbb{Q}_{3})$
		\[
		\inv_{3}\mathcal B'(P_{3})=\inv_{w_3} \mathcal B(P_3) = (r(\epsilon)/\epsilon,2)_{\omega,w_3}-(h(P_{3}),2)_{\omega,w_3} = -(h(P_{3}),2)_{\omega,w_3},
		\]
		where $w_3$ is the unique place of $k$ dividing $3$. As $\inv_{3}$ is constant on $X(\mathbb{Q}_{3})$, it is sufficient to compute the invariant map at one point $P_{3}:=[-\sqrt[3]{q}:0:1:-1]\in X(\mathbb{Q}_{3})$. This gives
		\[
		(h(P_{3}),2)_{\omega,w_3}=\left(\frac{1-\omega\beta}{1-\beta},2\right)_{\omega, w_3}=2/3.
		\]
		
		\noindent \underline{$v =\ell =2$}\\
		Let $w_2$ be the one place of $k$ above $2$. Note that $\lambda$ and $\nu$ are not cubes in $\mathbb{Q}_{2}$ but $\lambda/\nu = 5$ is a cube in $\mathbb{Q}_{2}$, thus we fall into the case $G^{\langle 2 \rangle}=\langle r \rangle$. By Lemma \ref{lemma: epsilon for 2,5,q}, $\epsilon \in \mathbb{Q}(\beta)$, hence $q(\epsilon)=\epsilon$ and for all $P_{2}\in X(\mathbb{Q}_{2})$
		\[
		\inv_{2}\mathcal{B}'(P_{2})=(q(\epsilon)/\epsilon,2/5)_{\omega,w_2}=(1,2/5)_{\omega,w_2}=0.
		\]
		
		\noindent \underline{$v =q$}\\
		As $\nu$ is a cube in $\mathbb{Q}_{q}$ we fall into the case $G^{\langle q \rangle}=\langle t \rangle$, hence $\inv_{q}= 0$.
	\end{proof}
	
	\begin{proposition}\label{prop: example 1 prime 5}
		Let $P_{5} = [x_{0}:x_{1}:x_{2}:x_{3}]\in \mathcal X(\mathbb{Z}_{5})$ such that $x_{0}\equiv 0 \mod 5$ and $x_{1}  \not\equiv 0 \mod 5$, then
		\[
		\inv_{5}\mathcal B'(P_{5})=1/3.
		\]
	\end{proposition}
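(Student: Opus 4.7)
The plan is to evaluate $\inv_5\mathcal{A}'(P_5)$ by applying Table~\ref{table:inv maps} directly at $v=5$. Since $5\equiv 2\bmod 3$, the prime $5$ is inert in $k$; writing $w_5$ for the unique place of $k$ above $5$, one has $\inv_5\mathcal{A}'(P_5)=\inv_{w_5}\mathcal{A}(P_5)$. With $\lambda=2$, $\mu=5q$ and $\nu=2/5$, Hensel's lemma applied to $3^3\equiv 2\bmod 5$ shows $\lambda=2\in\mathbb{Q}_5^{\ast 3}$, while the $5$-adic valuations of $\nu$ and $\lambda/\nu=5$ are $\mp 1$, so neither is a cube. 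This places us in the row $G^{v}=\langle s\rangle$ of the table, and the relevant formula reads
\[
\inv_{w_5}\mathcal{A}(P_5)=\bigl(f(P_5)/\eta,\,\nu\bigr)_{\omega,w_5},\qquad f=\frac{x_0+\alpha\omega x_1}{x_0+\alpha\omega^2 x_1}.
\]
Moreover, the integral element $\epsilon\in\mathcal{O}_{\mathbb{Q}(\beta)}\subset K_2^{\ast}$ produced by Lemma~\ref{lemma: epsilon for 2,5,q}, viewed in the decomposition of Remark~\ref{rem:good epsilon} with trivial $L_2^{\ast}$-factor, permits the choice $\eta=1$, reducing the symbol to $(f(P_5),\nu)_{\omega,w_5}$.

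The next step is to reduce $f(P_5)$ modulo the uniformiser $5$ at $w_5$. Fix $\alpha\in\mathbb{Z}_5^{\ast}$ with $\alpha^3=2$, which is possible by the Hensel argument above. Because $x_1\in\mathbb{Z}_5^{\ast}$, one may factor $\alpha x_1$ out of the numerator and denominator of $f(P_5)$ to write
\[
f(P_5)=\frac{\omega+x_0/(\alpha x_1)}{\omega^2+x_0/(\alpha x_1)}.
\]
The hypothesis $x_0\equiv 0\bmod 5$ then yields $f(P_5)\equiv\omega/\omega^2=\omega^{2}\bmod w_5$, so in particular $f(P_5)$ is a $w_5$-unit.

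To conclude, split $\nu=2\cdot 5^{-1}$ and use the bilinearity \eqref{eqn:Hilb-general}. The symbol $(f(P_5),2)_{\omega,w_5}$ vanishes by \eqref{eqn:Hilb-units-uniformiser}, since both arguments are $w_5$-units, so $(f(P_5),\nu)_{\omega,w_5}=-(f(P_5),5)_{\omega,w_5}$. Now formula \eqref{eqn:Hilbert-unit-uniformiser-p=2(3)} applies with $u=f(P_5)\equiv\omega^{2}$ and $p=5$: one computes $f(P_5)^{(25-1)/3}\equiv\omega^{16}\equiv\omega\bmod 5$, whence $i=1$ and $(f(P_5),5)_{\omega,w_5}\equiv 2/3$ in $\tfrac{1}{3}\mathbb{Z}/\mathbb{Z}$. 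Therefore $\inv_5\mathcal{A}'(P_5)\equiv -2/3\equiv 1/3\pmod{1}$, as claimed. The only non-mechanical input is the justification $\eta=1$, which relies on the integrality of $\epsilon$ from Lemma~\ref{lemma: epsilon for 2,5,q} together with Remark~\ref{rem:good epsilon}; everything else is a short, direct Hilbert symbol computation.
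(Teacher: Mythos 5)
Your proposal is correct and follows essentially the same route as the paper: the paper proves this by citing its more general Proposition~\ref{prop:inv at p} (with $\ell=2$, $p=5$), whose proof likewise places $v=5$ in the $G^v=\langle s\rangle$ row of Table~\ref{table:inv maps}, takes $\eta=1$ via the norm-equation lemma and Remark~\ref{rem:good epsilon}, reduces $f(P_5)$ to $\omega^{-1}=\omega^2$, and evaluates $(\omega,p)_{\omega,w_p}\equiv -(p^2-1)/3 \bmod 3$. Your computation simply unfolds that same calculation concretely for $p=5$ (splitting $\nu=2/5$ by bilinearity and applying \eqref{eqn:Hilbert-unit-uniformiser-p=2(3)} directly), arriving at the same value $1/3$.
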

	\begin{proof}
		This is precisely Proposition~\ref{prop:inv at p} for the case $\ell=2$ and $p=5$.
	\end{proof}
	
	\begin{corollary}\label{cor: example 1 invariant map on cubic surface}
		Let $(P_{v})_{v\in \Omega_{\mathbb{Q}}}\in \mathcal{X}(\mathbb{A}_{\mathbb{Z}})$ such that $P_{5}=[x_{0}:x_{1}:x_{2}:x_{3}]$ where $x_{0}\equiv 0$ mod 5 and $x_{1}\not\equiv 0$ mod 5. Then \[
		\sum\limits_{v\in \Omega_{\mathbb{Q}}}\inv_{v}\mathcal{B}'(P_{v})=2/3.
		\]
	\end{corollary}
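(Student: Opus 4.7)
The proof will be a direct aggregation of the local invariant computations already carried out. My plan is to invoke Proposition~\ref{prop: example 2 places 2,3,q} to pin down $\inv_v \mathcal{A}'$ for every place $v \neq 5$, then invoke Proposition~\ref{prop: example 1 prime 5} for the single remaining place $v = 5$, and finally sum.

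More concretely, I would begin by observing that because the local invariant maps at the places $v \neq 5$ are constant on $\mathcal{U}_{2,5,q}(\mathbb{Z}_v)$ by Proposition~\ref{prop: example 2 places 2,3,q}, the value of $\inv_v \mathcal{A}'(P_v)$ does not depend on the chosen adelic point. Proposition~\ref{prop: example 2 places 2,3,q} then gives $\inv_3 \mathcal{A}'(P_3) = 1/3$ and $\inv_v \mathcal{A}'(P_v) = 0$ for every $v \notin \{3, 5\}$. All but finitely many of these vanishing contributions are automatic from the fact that $\mathcal{A}'$ is unramified outside $S_{\mathcal{A}'}$, so there is no convergence issue with the infinite sum.

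Next, the hypothesis on $P_5$ is exactly the hypothesis of Proposition~\ref{prop: example 1 prime 5}, which gives $\inv_5 \mathcal{A}'(P_5) = 1/3$. Summing over all places:
\[
\sum_{v \in \Omega_{\mathbb{Q}}} \inv_v \mathcal{A}'(P_v) = \inv_3 \mathcal{A}'(P_3) + \inv_5 \mathcal{A}'(P_5) = \tfrac{1}{3} + \tfrac{1}{3} = \tfrac{2}{3},
\]
as claimed. There is no real obstacle here — the statement is essentially a bookkeeping corollary of the two previous propositions, and the only thing to be careful about is confirming that the list $\{3,5\}$ exhausts the places where $\inv_v \mathcal{A}'$ can be non-zero (which follows from Proposition~\ref{prop: example 2 places 2,3,q} together with the fact, recalled in Remark~\ref{rem:good epsilon}, that $\epsilon$ being a $\{5,q\}$-unit by Lemma~\ref{lemma: epsilon for 2,5,q} forces vanishing of the invariant at all primes of good reduction outside $S$).
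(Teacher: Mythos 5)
Your proof is essentially the same as the paper's: aggregate Proposition~\ref{prop: example 2 places 2,3,q} for $v\neq 5$, Proposition~\ref{prop: example 1 prime 5} for $v=5$, and sum. The one place where the paper adds something you elide is the archimedean place: the paper's proof of the Corollary explicitly notes that $\inv_\infty\mathcal{A}'=0$ because $\mathcal{A}'$ has order $3$ while $\Br\mathbb{R}\cong\mathbb{Z}/2\mathbb{Z}$, whereas you fold $v=\infty$ into ``Proposition~\ref{prop: example 2 places 2,3,q} gives $\inv_v\mathcal{A}'=0$ for all $v\notin\{3,5\}$.'' The statement of that proposition does formally cover $v=\infty$, but its proof only treats finite places (good reduction via \cite[p.~31]{CTKS87} and the bad primes $3,\ell,q$ via Table~\ref{table:inv maps}), so the paper's explicit two-sentence argument for $\infty$ is the more careful route; you should add it, though this does not constitute a gap in the overall argument.
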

	\begin{proof}
		As $\mathcal{B}'$ is of order 3 in $\Br X$ we have $\inv_{\infty}\mathcal{B}'$ is constant as $\Br \mathbb{R}\cong \mathbb{Z}/2\mathbb{Z}$. By our choice of normalisation we have $\inv_{\infty}\mathcal{B}'(P_{\infty})=0$ for all $P_{\infty}\in X(\mathbb{R})$. Then by Propositions \ref{prop: example 2 places 2,3,q} and \ref{prop: example 1 prime 5} we have \[
		\sum\limits_{v\in \Omega_{\mathbb{Q}}}\inv_{v}\mathcal{B}'(P_{v})=2/3.\qedhere
		\]
	\end{proof}
	
	The above result shows that strong approximation on $\mathcal U_{2,5,q}$ fails. We can use this result to produce affine diagonal cubics for which the Brauer--Manin obstruction obstructs the integral Hasse principle.
	
	\begin{proposition}\label{prop: family of integral Brauer--Manin obstructions}
		Let $q \equiv 8 \mod 9$ be a prime number. Consider the surface $\mathcal U'$ given by
		\[
		5^3u_{1}^3+5qu_{2}^3+4qu_{3}^3=2 \subset \mathbb{A}^{3}_{\mathbb{Z}}.
		\]
		We have $\mathcal U'(\mathbb A_\ZZ) \ne \emptyset$, but $\mathcal U'(\mathbb A_\ZZ)^{\text{Br}} = \emptyset$. In particular, $\mathcal U'(\ZZ) = \emptyset$.
	\end{proposition}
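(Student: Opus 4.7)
The plan is to view $\mathcal{U}'$ as a different integral model of the rational surface $U_{2,5,q}$ studied above and to deduce the obstruction from Corollary~\ref{cor: example 1 invariant map on cubic surface}. To this end I would introduce the $\QQ$-isomorphism
\[
    \phi \colon U' \xrightarrow{\sim} U_{2,5,q}, \qquad (u_1, u_2, u_3) \longmapsto (x_0, x_2, x_3) = (-5u_1, -u_2, -u_3),
\]
which turns the affine equation $x_0^3 + 5q x_2^3 + 4q x_3^3 = -2$ of $\mathcal{U}_{2,5,q}$ (the chart $x_1 = 1$ of $\mathcal{X}_{2,5,q}$) into the defining equation $125 u_1^3 + 5q u_2^3 + 4q u_3^3 = 2$ of $\mathcal{U}'$. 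By Proposition~\ref{prop:invariant maps for general family} the pullback $\phi^{*}\mathcal{A}'$ generates $\Br U' / \Br \QQ$, and by functoriality the Brauer--Manin pairing on $\mathcal{U}'$ satisfies $\inv_v \phi^{*}\mathcal{A}'(P_v) = \inv_v \mathcal{A}'(\phi(P_v))$ at every place $v$.

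Next I would verify that $\mathcal{U}'(\AA_{\ZZ}) \neq \emptyset$. At every place $v \neq 5$ the substitution defining $\phi$ is an integral isomorphism of $\ZZ_v$-schemes between $\mathcal{U}'$ and $\mathcal{U}_{2,5,q}$ because $5$ is a unit in $\ZZ_v$, so $\mathcal{U}'(\ZZ_v) \cong \mathcal{U}_{2,5,q}(\ZZ_v)$ is nonempty by Lemma~\ref{lem:loc sol for lpq}. At $p = 5$, setting $u_1 = u_2 = 0$ reduces the equation of $\mathcal{U}'$ to $u_3^3 = 1/(2q)$, which admits a solution in $\ZZ_5^{\times}$ since every unit of $\ZZ_5$ is a cube ($\gcd(3, 4) = 1$).

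The crux of the proof is the observation that for \emph{any} $(u_1, u_2, u_3) \in \mathcal{U}'(\ZZ_5)$ the associated projective point
\[
    \phi(u_1, u_2, u_3) = [-5u_1 : 1 : -u_2 : -u_3] \in \mathcal{X}_{2,5,q}(\ZZ_5)
\]
satisfies $x_0 \equiv 0 \bmod 5$ and $x_1 \not\equiv 0 \bmod 5$, precisely the hypothesis of Corollary~\ref{cor: example 1 invariant map on cubic surface}. Consequently every adelic point $P \in \mathcal{U}'(\AA_{\ZZ})$ is carried to an adelic point $\phi(P) \in \mathcal{X}_{2,5,q}(\AA_{\ZZ})$ satisfying that corollary's assumption, yielding
\[
    \sum_{v \in \Omega_{\QQ}} \inv_v \phi^{*}\mathcal{A}'(P_v) = \sum_{v \in \Omega_{\QQ}} \inv_v \mathcal{A}'(\phi(P_v)) = \tfrac{2}{3} \neq 0.
\]
Hence $\mathcal{U}'(\AA_{\ZZ})^{\Br} = \emptyset$ and in particular $\mathcal{U}'(\ZZ) = \emptyset$.

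The main delicate point is the failure of integrality of $\phi$ at $5$: the factor of $5$ in $-5u_1$ is precisely what confines every $\ZZ_5$-point of $\mathcal{U}'$ to the subset of $\mathcal{X}_{2,5,q}(\ZZ_5)$ on which Proposition~\ref{prop: example 1 prime 5} forces $\inv_5 \mathcal{A}' = 1/3$. Thus, although $\inv_5 \mathcal{A}'$ surjects onto $\tfrac{1}{3} \ZZ/\ZZ$ on the whole of $\mathcal{U}_{2,5,q}(\ZZ_5)$ by Proposition~\ref{prop:invariant maps for general family}, it becomes constant on the smaller set $\phi(\mathcal{U}'(\ZZ_5))$, and this constancy is what produces the non-trivial Brauer--Manin obstruction.
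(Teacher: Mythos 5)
Your proof is correct and follows essentially the same route as the paper's: view $\mathcal{U}'$ as an integral model of $U_{2,5,q}$ isomorphic to $\mathcal{U}_{2,5,q}$ over $\ZZ[1/5]$, verify local solubility at $5$ via the point $(0,0,1/\sqrt[3]{2q})$, observe that every $\ZZ_5$-point is forced into the region $x_0 \equiv 0$, $x_1 \not\equiv 0 \bmod 5$, and conclude from Corollary~\ref{cor: example 1 invariant map on cubic surface}. Worth noting: your explicit map $\phi(u_1,u_2,u_3)=[-5u_1:1:-u_2:-u_3]$ is the one that actually lands on $\mathcal{X}_{2,5,q}$ and has $x_1\not\equiv 0\bmod 5$; the paper's printed map $(u_1,u_2,u_3)\mapsto[5u_1:u_2:u_3:1]$ appears to be a misprint, as it neither satisfies the defining equation of $\mathcal{X}_{2,5,q}$ nor pins $x_1$ to a unit, so your version is the one for which the argument goes through.
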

	\begin{proof}
		Let \[
		i:\mathcal{U}'\rightarrow \mathcal{X}, (u_{1},u_{2},u_{3})\mapsto [5u_{1}:u_{2}:u_{3}:1]
		\] and $U':=\mathcal{U}'\times_{\mathbb{Z}}\mathbb{Q}$. Since $\mathcal U_{2,5,q}$ and $\mathcal U'$ are isomorphic over $\mathbb Z[1/5]$ the local solubility is immediate away from $5$. At the place $5$ we can take the point $(0, 0, 1/\sqrt[3]{2q}) \in \mathcal U'(\ZZ_5)$.

		Since the surfaces are isomorphic over $\QQ$ we have that $i^{*}\mathcal{B}'$ generates $\Br U'/\Br_0 U'$. By functoriality and Corollary \ref{cor: example 1 invariant map on cubic surface} we see that for all $(P_{v})_{v\in \Omega_{\mathbb{Q}}}\in \mathcal{U}'(\mathbb{A}_{\mathbb{Z}})$
		\[
		\sum\limits_{v\in \Omega_{\mathbb{Q}}}\inv_{v}i^{*}\mathcal{B}'(P_{v})=2/3.
		\] Hence, $\mathcal{U}'(\mathbb{A}_{\mathbb{Z}})^{\Br}=\emptyset$.
	\end{proof}

\bibliographystyle{amsalpha}{}
\bibliography{references}
\end{document}